\definecolor{shadecolor}{rgb}{1, 0.8, 0.3}
\title{
Analytic continuation of local (un)stable manifolds with 
rigorous computer assisted error bounds}
\author[1]{William D. Kalies  \thanks{
Email: {\tt wkalies@fau.edu}}}
\author[2]{Shane Kepley \thanks{S.K. partially supported by NSF 
grants DMS-1700154 and DMS - 1318172, and by  the Alfred P. Sloan Foundation grant G-2016-7320
Email: {\tt skepley@my.fau.edu}} }
\author[3]{J.D. Mireles James \thanks{J.M.J partially supported by NSF 
grants DMS-1700154 and DMS - 1318172, 
and by  the Alfred P. Sloan Foundation grant G-2016-7320
Email: {\tt jmirelesjames@fau.edu}}}
\affil[1,2,3]{Florida Atlantic University, Department of Mathematical Sciences}
\date{\today}
\newcommand{\rr}{\mathbb{R}}
\newcommand{\cc}{\mathbb{C}}
\newcommand{\nn}{\mathbb{N}}
\newcommand{\TT}[1]{\mathcal{T} \left( #1 \right)}
\newcommand{\itt}[1]{\mathcal{T}^{-1} \left( #1 \right)}
\newcommand{\overbar}[1]{\mkern 1.5mu\overline{\mkern-1.5mu#1\mkern-1.5mu}\mkern 1.5mu}
\newcommand{\norm}[1]{\left|\left| #1 \right|\right|}
\newcommand{\normX}[1]{||#1||_{\mathcal{X}}}
\theoremstyle{remark}
\newtheorem{remark}{Remark}
\newtheorem{theorem}{Theorem}[section]
\newtheorem{lemma}[theorem]{Lemma}
\newtheorem{prop}[theorem]{Proposition}
\begin{document}

\maketitle
\begin{abstract}
We develop a validated numerical procedure
for continuation of local stable/unstable manifold patches 
attached to  equilibrium solutions 
of ordinary differential equations.
The procedure has two steps.
First we compute an accurate high order Taylor
expansion of the local invariant manifold.  This 
expansion is valid in some neighborhood of the 
equilibrium.  An important component of our method is 
that we obtain mathematically rigorous lower 
bounds on the size of this neighborhood, as well as 
validated a-posteriori error bounds for the polynomial approximation. In the second step we use a rigorous numerical integrating scheme to 
propagate the boundary of the 
local stable/unstable manifold as long as possible, i.e.\ as 
long as the integrator yields validated error bounds  
below some desired tolerance.  The procedure exploits adaptive remeshing 
strategies which track the growth/decay of the Taylor coefficients of the 
advected curve. In order to highlight the utility of the procedure we study the embedding of 
some two dimensional manifolds in the Lorenz system.  
\end{abstract}

\section{Introduction}\label{sec:intro}
This paper describes a validated numerical method for computing 
accurate, high order approximations of stable/unstable manifolds of 
analytic vector fields. Our method generates a system of polynomial 
maps describing the manifold away from the 
equilibrium. The polynomials approximate charts for the manifold, and each comes equipped with mathematically 
rigorous bounds on all truncation and discretization errors.
A base step computes a parameterized
local stable/unstable manifold valid in a neighborhood of the equilibrium point.
This analysis exploits the \textit{parameterization method}
\cite{MR1976079, MR1976080, MR2177465, MR2821596, JDMJ01, MR3207723}.
The iterative phase of the computation begins by meshing 
the boundary of the initial chart
into a collection of submanifolds.  The submanifolds
are advected using a Taylor integration scheme, again equipped 
with mathematically rigorous validated error bounds.  

Our integration scheme provides a Taylor expansion in 
both the time and space variables, but uses only the spatial variables
in the invariant manifold. This work builds on the substantial 
existing literature on validated numerics for 
initial value problems, or \textit{rigorous integrators}, see for example 
\cite{MR1652147, MR2644324, MR1930946, MR3281845},
and exploits optimizations developed in 
\cite{HLM, BDLM, fourierAutomaticDiff}.

After one step of integration
we obtain a new system of charts which describe the advected 
boundary of the local stable/unstable manifold.  
The new boundary is adaptively 
remeshed to minimize integration errors in the next step.  
The development of a mathematically rigorous
remeshing scheme to produce the new system of boundary arcs
is one of the main technical achievements of the present work,
amounting to a validated numerical verification procedure 
for analytic continuation problems in several complex variables.  
Our algorithm exploits the fact that the operation of
recentering a Taylor series can be thought of as 
a bounded linear operator on a certain 
Banach space of infinite sequences (i.e.\ the Taylor coefficients),
and this bounded linear operator can be studied by adapting 
existing validated numerical methods. 
The process of remeshing is iterated as long
as the validated error bounds are held below some user specified
tolerance, or a specified number of time units.

To formalize the discussion we introduce notation. We restrict the discussion to unstable manifolds and note that our procedure applies to stable manifolds equally well by reversing the direction of time. Suppose that $f \colon \mathbb{R}^n \to \mathbb{R}^n$ 
is a real analytic vector field, and assume that $f$ generates
a flow on an open subset $U \subset \mathbb{R}^n$.  Let 
$\Phi \colon U \times \mathbb{R} \to \mathbb{R}^n$ 
denote this flow.  

Suppose that 
$p_0 \in U$ is a hyperbolic equilibrium point with $d$ unstable 
eigenvalues.  By the unstable manifold theorem
there exists an $r > 0$ so that the set
\[
W_{\mbox{\tiny loc}}^u(p_0, f, r) := \left\{x \in B_r^n(p_0) : 
\Phi(x, t) \in B_r^n(p_0) \mbox{ for all } t \leq 0
\right\},
\]  
is analytically diffeomorphic to a $d$-dimensional disk which is tangent at $p_0$
to the unstable eigenspace of the matrix $Df(p_0)$. Moreover, $\Phi(x, t) \to p_0$ as   $t \to -\infty$ for each $x \in W_{\mbox{\tiny loc}}^u(p_0, f, r)$.
Here $B_r^n(p_0)$ is the ball of radius 
$r > 0$ about $p_0$ in $\mathbb{R}^n$.  
We simply write $W_{\mbox{\tiny loc}}^u(p_0)$ when $f$ and $r$
are understood. \textit{The unstable manifold} is then defined as the collection of all points $x \in \mathbb{R}^n$ such that
$\Phi(x, t) \to p_0$ as $t \to - \infty$ which is given explicitly by
\[
W^u(p_0) = \bigcup_{0 \le t } \Phi \left( W_{\mbox{\tiny loc}}^u(p_0), t \right).
\]

The first step of our program is to compute an analytic chart map for the local manifold of the form, $P \colon \overbar{B_1^d(0)} \to \mathbb{R}^n$, such that $P(0) = p_0$, $\mbox{image}(DP(0))$ is contained the unstable eigenspace, and
\[
\mbox{image}(P) \subset W_{\mbox{\tiny loc}}^u(p_0).
\]
In Section \ref{sec:parameterization} we describe how this is done rigorously with computer assisted a-posteriori error bounds. 

Next, we note that $W^u_{\mbox{\tiny loc}}(p_0)$ is backward invariant under $\Phi$, 
and thus the unstable manifold is the forward image of the boundary of the local unstable manifold by the flow. To explain how we exploit this, suppose we have computed the chart of the local manifold described above. 
\begin{figure}[t!]
\center{
  \includegraphics[width=0.8\linewidth]{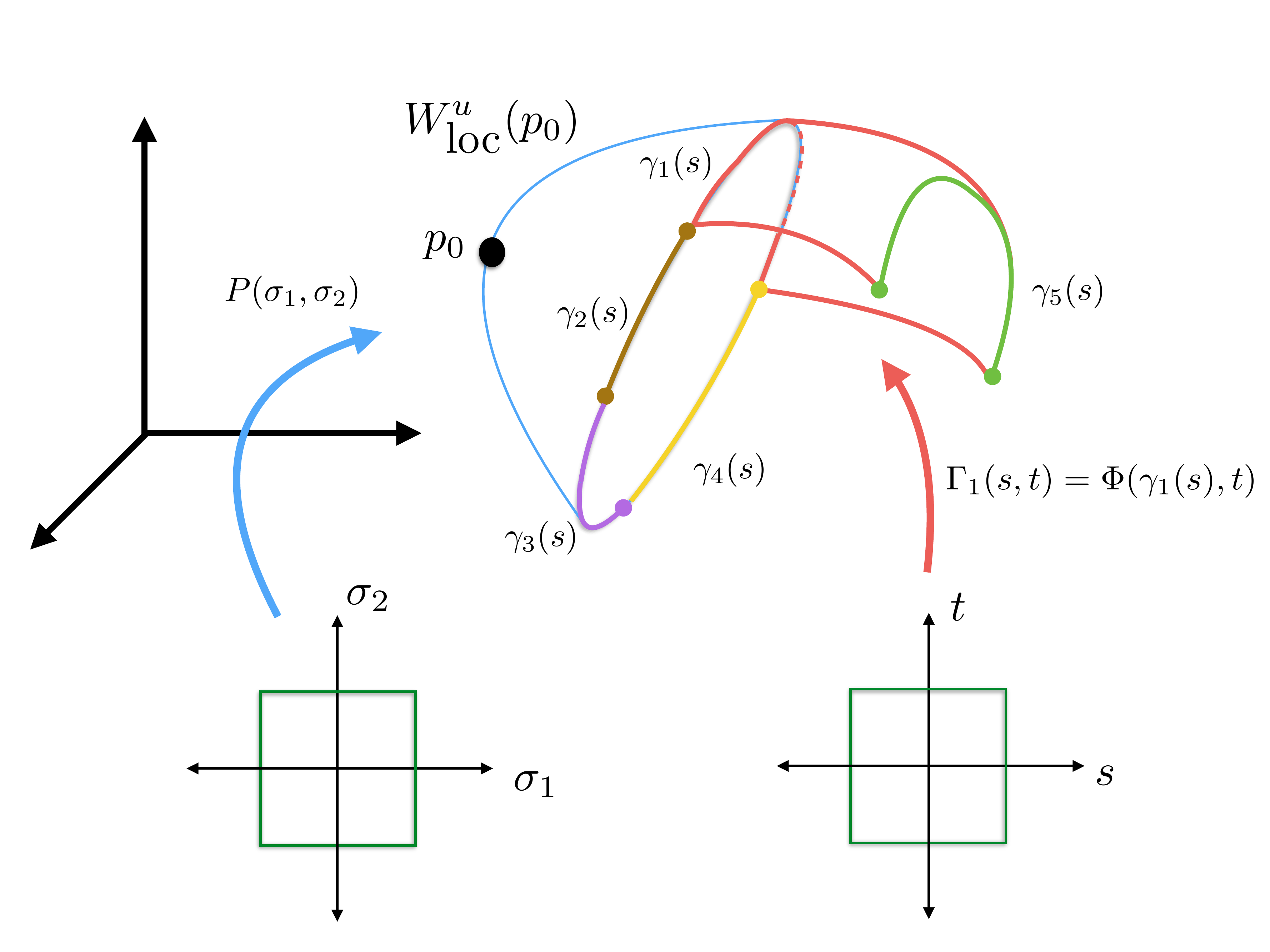}
}
\caption{The figure provides a schematic rendering of the two kinds of 
charts used on our method.  Here $P$ is the local patch containing the 
fixed point. This chart is computed and analyzed using the parameterization
method discussed in Section \ref{sec:parameterization}.  
The boundary of the image of $P$ is meshed into a number of 
lower dimensional patches $\gamma_j(s)$ and the global manifold is ``grown'' by 
advecting these patches. This results in the charts $\Gamma_j(s,t)$
describing the manifold far from the equilibrium point.} \label{fig:introSchematic}
\end{figure}
We choose a piecewise analytic system of functions
$\gamma_j \colon \overbar{B_1^{d-1}(0)} \to \mathbb{R}^n$, 
$1 \leq j \leq K_0$, such that 
\[ 
\bigcup_{1 \leq j \leq K_0} \gamma_j\left(\overbar{B_1^{d-1}(0)}\right) = \partial P(B^d_1(0)),
\]
with
\[
\mbox{image}(\gamma_i) \cap \mbox{image}(\gamma_j) 
\subset \partial\, \mbox{image}(\gamma_i) \cap \partial \,\mbox{image} (\gamma_j),
\]
i.e.\ the functions $\gamma_j(s)$, $1 \leq j \leq K_0$ parameterize the boundary of 
the local unstable manifold, and their pairwise intersections are $(d-2)$-dimensional submanifolds. Now, fix a time $T > 0$, and for each $\gamma_j(s)$, $1 \leq j \leq K_0$, define $\Gamma_j \colon \overbar{B_1^{d-1}(0)} \times [0, T] \to \mathbb{R}^n$ by 
\[
\Gamma_j(s,t) = \Phi(\gamma_j(s), t) \qquad (s,t) \in \overbar{B_1^{d-1}(0)} \times [0, T].
\]
We note that
\[
\mbox{image}(P) \cup \left( \bigcup_{1 \leq j \leq K_0} \mbox{image}(\Gamma_j) \right)
\subset W^u(p_0),
\]
or in other words, the flow applied to the boundary of the local unstable manifold yields a larger piece of the unstable manifold. Thus, the second step in our program amounts to rigorously computing the charts $\Gamma_j$ and is described in Section \ref{sec:rigorousIntegration}.  
Figure \ref{fig:introSchematic} provides a graphical illustration of the 
scheme. 
\begin{figure}[t!]
	\includegraphics[width=1.075\linewidth]{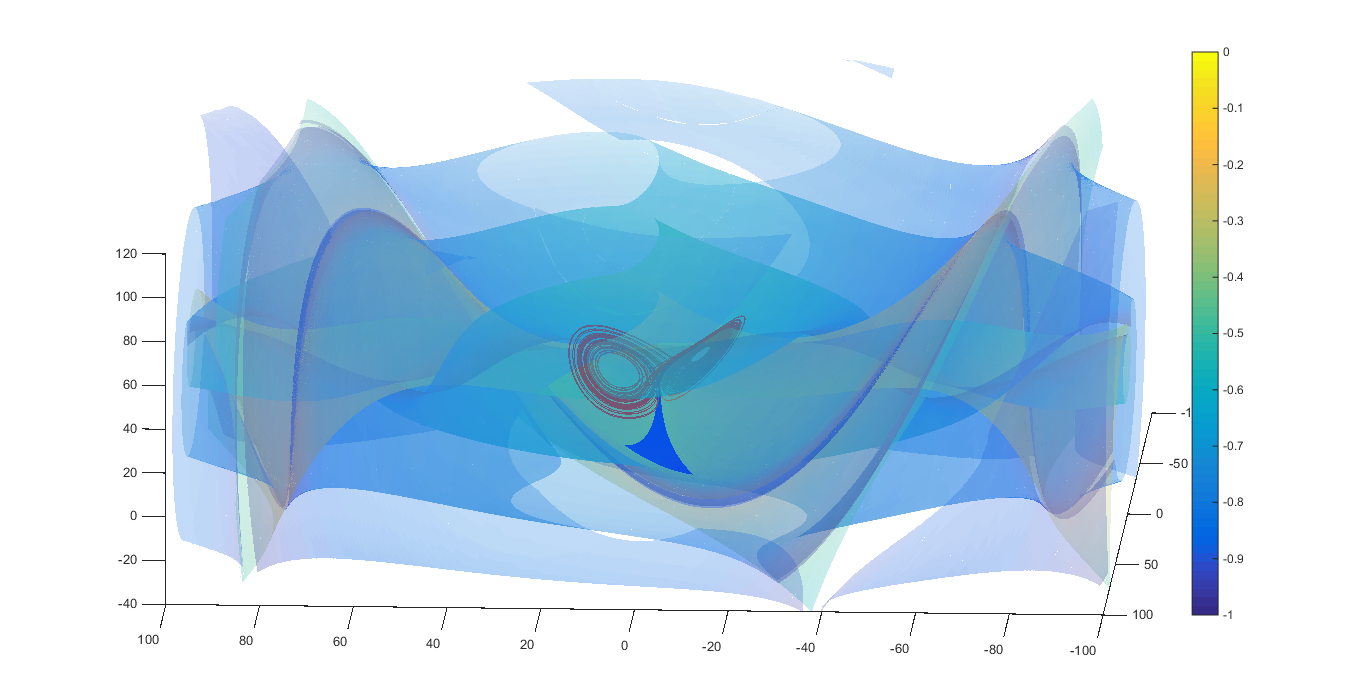}
	\caption{A validated two dimensional local stable 
		manifold of the origin in the Lorenz system at the classical parameter values:
		The initial local chart $P$ is obtained using the parameterization method, 
		as discussed in Section \ref{sec:parameterization}, and describes the 
		manifold in a neighborhood of the origin. The local stable manifold
		is the dark blue patch in the middle of the picture, below the attractor. A reference orbit near the attractor is shown in red for context. The boundary of the image of $P$ is meshed into arc segments and the global manifold is computed by advecting arcs by the flow using the rigorous integrator discussed in Section \ref{sec:rigorousIntegration}. The numerical details for this example are provided in Section \ref{sec:Lorenzresults}.} 
	\label{fig:slowstable}
\end{figure}

Figure \ref{fig:slowstable} illustrates the results of our method in a specific example.
Here we advect the boundary of a high order parameterization of the local stable  manifold at the origin of the Lorenz system at the classical parameter values, see Section~\ref{sec:parameterization}. The color of each region of the manifold describes the integration time $t \in [-1,0]$. The resulting manifold is described by an atlas consisting of 4,674 polynomial charts computed to order 24 in time and 39 in space. The adaptive remeshing described in Section \ref{sec:remesh} is performed to restrict to the manifold bounded by the rectangle $[-100,100] \times [-100,100] \times [-40,120]$. 
\begin{remark}[Parameterization of local stable/unstable manifolds]
\label{rem:introParmMethod}
Validated numerical algorithms for solving initial value problems are computationally intensive, 
and it is desirable to postpone as long as possible the moment 
when they are deployed. In the present applications we would like to 
begin with a system of boundary arcs which are initially 
as far from the equilibrium as possible, so that the efforts
of our rigorous integrator are not spent recovering the approximately 
linear dynamics on the manifold.  To this end, we employ a high order
polynomial approximation scheme based on the parameterization method
of \cite{MR1976079, MR1976080, MR2177465}.  For our purposes
it is important  to have also mathematically rigorous error 
bounds on this polynomial approximation, and here we exploit a-posteriori 
methods of computer assisted proof for the parameterization method
developed in the recent work of
\cite{MR3068557, MR2821596, parmChristian, maximeJPMe, BDLM}.
These methods yield bounds on the 
errors and on the size of the domain of analyticity, accurate to nearly machine precision,
even a substantial distance from the equilibrium.
See also the lecture notes \cite{jayAMSnotes}.
\end{remark}

\begin{remark}[Technical remarks on validated numerics for initial value problems] \label{rem:rogorousIntegrators}
A thorough review of the literature, much less any serious comparison 
of existing rigorous integrators, are tasks far beyond the scope of the present work.
We refer the interested reader to the discussion in the recent 
review of \cite{MR3504856}.  That being said, a few brief remarks on some
similarities and differences between the present and existing works are
in order.   The comments below reflect the fact that different studies 
have differing goals and require different 
tools: our remarks in no way constitute a criticism of any existing
method.  The reader should keep in mind that our goal 
 is to advect nonlinear sets of initial conditions
 which are parameterized
by analytic functions.

In one sense our validated integration scheme is closely related to 
that of \cite{MR1652147}, where rigorous  
Taylor integrators for nonlinear sets of initial conditions are developed.
A technical difference is that the a-posteriori error analysis implemented in 
\cite{MR1652147} is based on an application of the Schauder Fixed Point Theorem to a
Banach space of continuous functions. The resulting
error bounds are given in terms of continuous rather than analytic functions.

In this sense our integration scheme is also related to the work of 
\cite{MR3022075, MR3281845} on Taylor integrators 
in the analytic category.  While the integrators in the works just cited are
used to advect points or small boxes of initial conditions, the authors expand 
the flow in a parameter as well as in time, 
validating expansions of the flow in several complex variables.
A technical difference between the method employed in this work
and the work just cited is that our a-posteriori 
analysis is based on a Newton-like method, rather than the contraction mapping 
theorem.

The Newton-like analysis applies to  polynomial approximations 
which are not required to have interval coefficients.  Only the 
bound on the truncation error is given as an interval.  The truncation 
error in this case is not a tail, as the unknown analytic function may 
perturb our polynomial coefficients to all orders.  We only know that this 
error function has small norm.

This can be viewed as an analytic version of the 
``shrink wrapping'' discussed in \cite{MR2312532}. However, in our case
the argument does not lose control of bounds on derivatives. 
Cauchy bounds can be used to estimate derivatives of the truncation
error, after giving up a small portion of the 
validated domain of analyticity.  Such techniques have been used before in
the previous work of  \cite{parmChristian, maximeJPMe}. The works just cited
deal with Taylor methods for invariant manifolds rather than rigorous integrators.

Since our approach requires only floating point
rather than interval enclosures of Taylor coefficients,  
we can compute coefficients using a numerical Newton scheme
rather than solving term by term using recursion.  
Avoiding recursion can be advantageous when computing a large number of 
coefficients for a multivariable series.  The quadratic convergence
of Newton's method facilitates rapid computation to high order. 
Note also that while our method does require the inversion of a large matrix, this matrix is upper triangular,
and hence this inversion can be managed fairly efficiently.  

Any discussion of rigorous integrators must mention the work of the CAPD group.
The CAPD library is probably the most sophisticated and widely used 
software package for computer assisted proof in the dynamical systems
 community.  The interested reader will want to consult the works of 
\cite{MR1930946, cnLohner}.  The CAPD algorithms are based on the pioneering 
work of Lohner \cite{MR950221, MR904317, MR1387154}, and instead of
using fixed point arguments in function space to manage truncation errors,
develop validated numerical bounds based on the Taylor remainder theorem.
The CAPD algorithms provide results in the $C^k$ category, and are 
often used in conjunction with topological arguments in a Poincare section  
\cite{MR1626596, MR2271217, MR2012847, MR1961956, MR3032848, MR3443692}
to give computer assisted proofs in dynamical systems theory.
\end{remark}

\begin{remark}[Basis representations for analytic charts]
\label{rem:basis}
In this work we describe our method by computing charts for both the local parameterization and its advected image using Taylor series (i.e.\ analytic charts are expressed in a monomial basis). This choice allows for ease of exposition and implementation. However, the continuation method developed here works in principle 
for other choice of basis.  What is needed is a method for rigorously computing error estimates. 

Consider for example the case of an (un)stable manifold attached to a periodic orbit
of a differential equation.  In this case one could parameterize the local manifold using a 
Fourier-Taylor basis as in  basis as in 
\cite{MR2551254, MR3118249, doi:10.1137/140960207}.
Such a local manifold could then be continued using Taylor basis for the rigorous integration
as discussed in the present work. Alternatively, if one is concerned with obtaining the largest globalization of the manifold with minimal error bounds it could be appropriate to use a Chebyshev basis for the rigorous integration to reduce the required number of time steps. The point is that we are free to choose any appropriate basis for the charts in space/time provided it is amenable to rigorous validated error estimates. The reader interested in 
computer assisted proofs compatible with the presentation of the present work -- and 
using bases other than Taylor -- are referred to 
\cite{HLM,BDLM,MR2679365,dlLFGL,fourierAutomaticDiff, jpPO_PDE}
\end{remark}
\begin{remark}[Why continue the local manifold?]
As just mentioned there are already many studies in the literature which give validated 
numerical computations of local invariant manifolds, as well as computer assisted
proofs of the existence of connections between them.  Our methods provide another  
approach to the computer assisted study of connecting orbits via the ``short connection'' mechanism
developed in \cite{MR3207723}.  But if one wants to rule out other connections then it is necessary 
to continue the manifold, perhaps using the methods of the present work.
Correct count for connecting orbits is essential for example in applications concerning optimal 
transport time, or for computing boundary operators in Morse/Floer homology theory.  
\end{remark}
\begin{remark}[Choice of the example system]
The validated numerical theorems discussed in the present work are benchmarked
for the Lorenz system.  This choice has several advantages, which we explain briefly.  
First, the system is three dimensional with quadratic
nonlinearity. Three dimensions facilitates drawing of nice pictures
which provide useful insight into the utility of the method. 
The quadratic nonlinearity minimizes 
technical considerations, especially the derivation of certain analytic estimates.  
We remark however that the utility of the Taylor methods discussed here are by no means limited 
to polynomial systems.  See for example the discussion of automatic differentiation in 
\cite{mamotreto}.  We note also that many of the computer assisted proofs discussed in the preceding 
remark are for non-polynomial nonlinearities.  The second and third authors of the present work
are preparing a manuscript describing computer assisted proofs of chaotic motions 
for a circular restricted four body problem which uses the methods of the 
present work.

Another advantage of the Lorenz system is that we exploit the discussion of
rigorous numerics for stable/unstable manifolds given in the Lecture notes of 
\cite{jayAMSnotes}.  Again this helps to minimize technical complications 
and allows us to focus instead on what is new here.

Finally, the Lorenz system is an example where other authors have conducted 
some rigorous computer assisted studies growing invariant manifolds attached to 
equilibrium solutions of differential equations.  The  reader wishing to  
make some rough comparisons between existing methods 
might consult the Ph.D. thesis \cite{wittigThesis}, see especially Section $5.3.5.2$.
For example one could compare the results illustrated in Figure $5.18$ of that
Thesis with the results illustrated in Figure $2$ of the present work.  The
manifolds in these figures have comparable final validated error bounds, 
while the manifold illustrated in Figure $2$ explores a larger region of 
phase space. 

We caution the reader that such comparisons must be made only cautiously.
For example the validation methods developed in \cite{wittigThesis} are based 
on topological covering relations and cone conditions, 
which apply in a $C^2$ setting. Hence the methods of 
\cite{wittigThesis} apply in a host of situations
where the methods of the present work -- which are based on 
the theory of analytic functions of several complex variables -- breakdown.  Moreover 
the initial local patch used for the computations in \cite{wittigThesis}
is smaller than the validated local manifold developed in   
\cite{jayAMSnotes} from which we start our computations.   
\end{remark}

\bigskip

The remainder of the paper is organized as follows.
In Section \ref{sec:background} we recall some basic facts from 
the theory of analytic functions of several complex variables, 
define the Banach spaces of infinite sequences used throughout the 
paper, and state an a-posteriori theorem used in later sections.
In Section \ref{sec:parameterization} we review the parameterization 
method for stable/unstable manifolds attached to equilibrium solutions
of vector fields.  In particular we illustrate the formalism which leads to 
high order polynomial approximations of the local invariant manifolds for the 
Lorenz system, and state an a-posteriori theorem which provides the 
mathematically rigorous error bounds.  Section 
\ref{sec:rigorousIntegration} describes in detail the subdivision strategy for 
remeshing analytic submanifolds and the rigorous integrator used to advect these submanifolds. Section \ref{sec:Lorenzresults} illustrates the method in the Lorenz system and illustrates some applications.
The implementation used to obtain all results are found at \cite{ourCode}.

\section{Background: analytic functions, 
Banach algebras of infinite sequences, 
and an a-posteriori theorem} \label{sec:background}

Section \ref{sec:background} reviews some basic properties of analytic 
functions, some standard results from nonlinear analysis,
and establishes some notation used in the remainder of the present work.
This material is standard and is included only for the sake of completeness.
The reader may want to skip ahead to Section \ref{sec:parameterization}, 
and refer back to the present section only as needed.  

\subsection{Analytic functions of several variables, and multi-indexed sequence spaces} \label{sec:sevVar}
Let $d \in \mathbb{N}$ and $z = (z^{(1)}, \ldots, z^{(d)})
\in \mathbb{C}^d$.  We endow $\mathbb{C}^d$ with the 
norm
\[
\|z \| = \max_{1 \leq i \leq d} | z^{(i)}|,
\]
where $|z^{(i)}| = \sqrt{\mbox{real}(z^{(i)})^2 + \mbox{imag}(z^{(i)})^2}$ is 
the usual complex modulus. We refer to the set 
\[
\mathbb{D}^{d} := \left\{ w = (w^{(1)}, \ldots, w^{(d)}) \in 
\mathbb{C}^d : |w^{(i)}| < 1 \mbox{ for all } 1 \leq i \leq d \right\},
\]
as the \textit{unit polydisk} in $\mathbb{C}^d$.  Throughout this paper whenever $d$ is 
understood we write $ \mathbb{D} := \mathbb{D}^d$.
Note that the $d$-dimensional open unit cube $(-1, 1)^d$ is obtained by restricting to the real
part of $\mathbb{D}$.

Recall that a function  $f \colon \mathbb{D} \to \mathbb{C}$ 
is {\em analytic} (in the sense of several complex variables)
if for each $z = (z^{(1)}, \ldots, z^{(d)})\in \mathbb{D}$ and $1 \leq i \leq d$,
the complex partial derivative, $\partial f /\partial z^{(i)}$, 
exists and is finite. Equivalently, $f$ is analytic 
(in the sense of several complex variables) if it is analytic (in the usual sense)
in each variable $z^{(i)} \in \mathbb{C}$ 
with the other variables fixed, for  $1 \leq i \leq d$. Denote by 
\[
\| f\|_{C^0( \mathbb{D}, \mathbb{C})} := \sup_{w \in  \mathbb{D}} |f(w^{(1)}, \ldots, w^{(d)})|,
\]
the supremum norm on $ \mathbb{D}$ which we often abbreviate to  $\| f\|_{\infty} := \| f\|_{C^0( \mathbb{D}, \mathbb{C})}$, and let $C^\omega( \mathbb{D})$ denote the set of bounded analytic functions on $ \mathbb{D}$. Recall that if $\{f_n\}_{n=0}^\infty \subset C^\omega(\mathbb{D})$ 
is a sequence of analytic functions and 
\[
\lim_{n\to \infty} \| f - f_n \|_{\infty} = 0,
\]
then $f$ is analytic (i.e.\ $C^\omega( \mathbb{D})$ is a Banach 
space when endowed with the $\| \cdot \|_{\infty}$ norm).
In fact, $C^\omega( \mathbb{D})$
is a Banach algebra, called the \textit{disk algebra}, 
when endowed with pointwise multiplication of functions.  
 
We write $\alpha = (\alpha_1, \ldots, \alpha_d) \in \mathbb{N}^d$ 
for a $d$-dimensional multi-index, where $|\alpha| := \alpha_1 + \ldots + \alpha_d$ is the {\em order}
of the multi-index, and 
$z^{\alpha} := (z^{(1)})^{\alpha_1}\ldots (z^{(d)})^{\alpha_d}$
to denote $z \in \mathbb{C}^d$ raised to the $\alpha$-power.
Recall that a function, $f \in C^\omega(\mathbb{D})$ if and only if for each $z \in  \mathbb{D}$,
$f$ has a power series expansion
\[
f(w) = \sum_{\alpha \in \nn^d} a_\alpha (w - z)^\alpha,
\]
converging absolutely and uniformly in some open neighborhood $U$ 
with $z \in U \subset \mathbb{D}$. For the remainder of this work, we are concerned only with Taylor expansions centered at the origin (i.e.\ $z = 0$ and $U = \mathbb{D}$). Recall that the power series coefficients (or \textit{Taylor coefficients}) 
are determined by certain Cauchy integrals.  
More precisely, for any $f \in C^\omega(\mathbb{D})$ and for any $0 < r < 1$ the $\alpha$-th Taylor coefficient of $f$ centered at $0$ is given explicitly by  
\[
a_\alpha := \frac{1}{(2 \pi i)^d} \int_{|z^{(1)}| = r} 
\ldots \int_{|z^{(d)}| = r}
\frac{f(z^{(1)}, \ldots, z^{(d)})}{
(z^{(1)})^{\alpha_1+1} \ldots (z^{(d)})^{\alpha_d+1}}
\, d z^{(1)} \ldots d z^{(d)},
\]
where the circles $|z^{(i)}| = r$, $1 \leq i \leq d$ are parameterized
with positive orientation.

The collection of all functions 
whose power series expansion centered at the origin converges absolutely and uniformly 
on all of $\mathbb{D}$ is denoted by $\mathcal{B}_d \subset C^\omega(\mathbb{D})$. Let $\mathcal{S}_d$ denote the set of all $d$-dimensional 
multi-indexed sequences of complex numbers. For $a = \{a_{\alpha}\} \in \mathcal{S}_d$ define the norm
\[
\| a \|_{1, d} := \sum_{\alpha \in \nn^d} | a_{\alpha}|,
\]
and let 
\[
\ell^1_d := \left\{a \in \mathcal{S}_d : \| a \|_{1, d} < \infty \right\},
\]
(i.e.\ $\ell^1_d$ is the Banach space of all absolutely summable 
$d$-dimensional multi-indexed sequences of complex numbers).
When $d$ is understood we often abbreviate to $\ell^1$ and $\| a \|_1$. For any $f \in C^\omega(\mathbb{D})$ with Taylor series centered at the origin given by 
\[
f(z) = \sum_{\alpha \in \nn^d} a_\alpha z^\alpha,
\]
let $\mathcal{T}$ denote the mapping given by 
\[
f \stackrel{\mathcal{T}}{\longmapsto} \left\{ a_\alpha \right\}_{\alpha \in \mathbb{N}^d},
\]
which associates an analytic function, $f \in C^\omega(\mathbb{D})$, with the sequence of Taylor coefficients $\{ a_\alpha\}_{\alpha \in \mathbb{N}^d}$ for its power series expansion at $z = 0$. We refer to $\mathcal{T}$ as the \textit{Taylor transform} of $f$ and note that $\mathcal{T}$ is both linear, one-to-one, and takes values in $\mathcal{S}_d$. Moreover, we have the trivial bound
\[
\| f \|_{\infty} \leq \| \TT{f}\|_1,
\]
for each $f \in C^\omega(\mathbb{D})$. Now, let $\mathcal{B}_d^1$ denote 
the collection of all functions $f \in C^\omega(\mathbb{D})$
whose Taylor coefficients are in $\ell^1$ and note that we have the inclusions
\[
\mathcal{B}_d^1 \subset \mathcal{B}_d \subset C^\omega(\mathbb{D}).
\]
In particular, if $a = \{a_\alpha\}_{\alpha \in \mathbb{N}^d} \in \ell^1$, then $a$ 
defines a unique analytic function, $\itt{a} = f \in C^\omega(\mathbb{D})$ given by
\[
f(z) = \sum_{\alpha \in \nn^d} a_\alpha z^\alpha.
\]
We remark that if $f \in \mathcal{B}_d^1$ then $f$ extends uniquely 
to a continuous function on $\overline{\mathbb{D}}$, as the power series coefficients
are absolutely summable at the boundary.  So if 
$f \in \mathcal{B}_d^1$ then
$f \colon \overline{\mathbb{D}} \to \mathbb{C}$
is well defined, continuous on $\overline{\mathbb{D}}$, and analytic on $\mathbb{D}$.

Finally, recall that $\ell^1$ inherits a Banach algebra structure from 
pointwise multiplication, a fact which is 
critical in our nonlinear analysis in Sections \ref{sec:parameterization} and \ref{sec:rigorousIntegration}. 
Begin by defining a total order on $\nn^d$ by setting 
$\kappa \prec \alpha$ if $\kappa_i \leq \alpha_i$ for every $i \in \{1,\dots,d\}$ and 
$\kappa \succ \alpha$ if $\kappa \not\prec \alpha$ (i.e.\ we endow 
$\nn^d$ with the \textit{lexicographic order}). Given $a, b \in \ell^1$, define the binary operator 
$* \colon \ell^1 \times \ell^1 \to \mathbb{S}_d$ by  
\[
[a*b]_\alpha = \sum_{\kappa \prec \alpha} a_\kappa \cdot b_{\alpha-\kappa}.
\]
We refer to $*$ as the \textit{Cauchy product}, and note the following properties:
\begin{itemize}
\item For all $a, b \in \ell^1$ we have
\[
\| a * b \|_1 \leq \| a \|_1 \|b \|_1.
\]
In particular, $\ell^1$ is a Banach algebra when endowed with the 
Cauchy product.
\item Let $f, g \in C^\omega(\mathbb{D})$,
and suppose that 
\[
f(z) = \sum_{\alpha \in \nn^d} a_\alpha z^\alpha
\quad \quad \quad \mbox{and} \quad \quad \quad 
g(z) = \sum_{\alpha \in \nn^d} b_\alpha z^\alpha.
\]
Then $f \cdot g \in C^\omega(\mathbb{D})$ and 
\[
(f \cdot g)(z) = \sum_{\alpha \in \nn^d} [a * b]_\alpha z^\alpha.
\]
In other words, pointwise multiplication of analytic functions corresponds to the Cauchy product in sequence space.  
\end{itemize}

\begin{remark}[Real analytic functions in $\mathcal{B}_d^1$]
If $f \in \mathcal{B}_d^1$ and the Taylor coefficients of $f$ are real, 
then $f$ is real analytic on $(-1, 1)^d$ and continuous 
on $[-1,1]^d$.
\end{remark}

\begin{remark}[Distinguishing space and time]
In Section \ref{sec:rigorousIntegration} it is advantageous both numerically and conceptually to distinguish time from spatial variables. When we need this distinction we write  
$\{a_{m, \alpha}\}_{(m,\alpha) \in \mathbb{N} \times \mathbb{N}^d} = a \in \ell_{d+1}^1$ with the appropriate norm given by  
\[
\| a \|_{1,d+1} = \sum_{m = 0}^\infty \sum_{\alpha \in \nn^d} |a_{m, \alpha}|.
\]
In this setting, $a$ defines a unique analytic function $\itt{a} = f \in C^\omega(\mathbb{D}^{d+1})$ given by  
\[
f(z,t) = \sum_{m=0}^\infty \sum_{\alpha \in \nn^d} a_{m, \alpha} z^\alpha t^m,
\]
where $z$ is distinguished as the (complex) space variable and $t$ is the time variable. Analogously, we extend the ordering on multi-indices to this distinguished case by setting $(j,\kappa) \prec (m,\alpha)$ if $j \leq m$ and $\kappa \prec \alpha$ as well as the Cauchy product by 
\[
[a*b]_{m,\alpha} = 
\sum_{j \leq m} \sum_{\kappa \prec \alpha} a_{j,\kappa} \cdot b_{m-j,\alpha-\kappa}.
\]
\end{remark}

\subsection{Banach spaces and linear algebra} 
\label{sec:linearAlgebra}
The validation methods utilized in this work are based on a set of principles for obtaining mathematically rigorous solutions to nonlinear operator equations with computer assistance referred to as the {\em radii polynomial approach}. A key feature of this philosophy is the characterization of a nonlinear problem in the space of analytic functions as a zero finding problem in sequence space. Specifically, our methods will seek a (Fr\'{e}chet) differentiable map in $\ell^1$ and require (approximate) computation of this map and its derivative. 



For our purposes, we are interested in bounded linear operators defined on $\ell^1$. Let $\mathcal{L}(\ell^1, \ell^1)$ denote the vector space of bounded linear operators from $\ell^1$ to itself, which we shorten to $\mathcal{L}(\ell^1)$, equipped with the operator norm induced by $\norm{\cdot}_{1}$. For this discussion we utilize the notation with space/time distinguished. To avoid confusion over indices, we denote indices for linear operators {\em inside} square brackets and components of vectors {\em outside} square brackets. Now, we fix a basis for $\ell^1$ composed of 
$\{e^{j \kappa}\}$  where
\[ 
[e^{j \kappa}]_{m,\alpha} =
\left(
\begin{array}{cc}
1 & (j,\kappa) = (m,\alpha) \\
0 & \text{otherwise}
\end{array}
\right),
\]
and we specify an element $A \in \mathcal{L}(\ell^1)$, 
by its action on these basis vectors which we denote by 
\[
A^{j k} =  A\cdot e^{j k}
\]
With this notation in place, our first goal is to compute a formula for the operator norm on $\mathcal{L}(\ell^1)$ defined by 
\[
\norm{A}_{1} = 
\sup\limits_{\norm{h}=1} \norm{A\cdot h}_{1}.
\]
\begin{prop}
	For $A \in \mathcal{L}(\ell^1)$, the operator norm is given by
	\[
	\norm{A}_{1} = 
	\sup\limits_{(j,\kappa) \in \nn \times \nn^d}  \norm{A^{j \kappa}}_{1}
	\]
\end{prop}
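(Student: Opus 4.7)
The plan is to prove equality by establishing the two matching inequalities, which is the standard argument for the operator norm on $\ell^1$ (the ``column-sup'' formula, analogous to the finite dimensional case).

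For the lower bound, I would observe that each basis vector satisfies $\|e^{j\kappa}\|_1 = 1$, and by definition $A \cdot e^{j\kappa} = A^{j\kappa}$. Hence for every $(j,\kappa) \in \nn \times \nn^d$,
\[
\|A\|_1 = \sup_{\|h\|_1 = 1} \|A \cdot h\|_1 \ge \|A \cdot e^{j\kappa}\|_1 = \|A^{j\kappa}\|_1,
\]
and taking the supremum over $(j,\kappa)$ gives $\|A\|_1 \ge \sup_{(j,\kappa)} \|A^{j\kappa}\|_1$.

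For the upper bound, fix $h \in \ell^1$ with $\|h\|_1 = 1$. I would expand $h$ in the basis as $h = \sum_{(j,\kappa)} h_{j,\kappa}\, e^{j\kappa}$, noting that this series converges absolutely in $\ell^1$ precisely because $\|h\|_1 = \sum |h_{j,\kappa}| < \infty$. Since $A$ is bounded (hence continuous) and linear, I can pass it through the sum to get $A \cdot h = \sum_{(j,\kappa)} h_{j,\kappa}\, A^{j\kappa}$, convergent in $\ell^1$. Applying the triangle inequality and the definition of the supremum,
\[
\|A \cdot h\|_1 \le \sum_{(j,\kappa)} |h_{j,\kappa}|\, \|A^{j\kappa}\|_1 \le \Bigl(\sup_{(j,\kappa)} \|A^{j\kappa}\|_1\Bigr) \sum_{(j,\kappa)} |h_{j,\kappa}| = \sup_{(j,\kappa)} \|A^{j\kappa}\|_1.
\]
Taking the supremum over all $h$ with $\|h\|_1 = 1$ yields the reverse inequality.

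The only technical point to be careful about is the interchange of $A$ with the infinite sum, which requires boundedness of $A$ (to justify the continuity used in moving $A$ inside the limit of partial sums); this is the one place where the hypothesis $A \in \mathcal{L}(\ell^1)$ is essential, and I would mention it explicitly rather than silently invoking it. Everything else is a routine application of the triangle inequality and the fact that $\{e^{j\kappa}\}$ forms a normalized basis of $\ell^1$. I do not expect any real obstacle; the supremum on the right is finite exactly when $A$ is bounded, so the formula is consistent in both directions.
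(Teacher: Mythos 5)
Your proof is correct and follows essentially the same route as the paper: the lower bound by evaluating $A$ on the unit basis vectors $e^{j\kappa}$, and the upper bound by expanding a unit vector $h$ in that basis and applying the triangle inequality, the only cosmetic difference being that you justify the upper bound by pushing $A$ through the series via continuity, whereas the paper works coordinate-wise and interchanges the order of the (absolutely convergent) double summation.
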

\begin{proof}
We define $C = 	\sup\limits_{(j,\kappa) \in \nn \times \nn^d}  \norm{A^{j \kappa}}_{1}$ which 
is finite since $A$ is a bounded linear operator. Suppose $h \in \ell^1$ is a unit vector which we express in the above basis as
\[
h = \sum_{j=0}^{\infty} \sum_{\kappa \in \nn^d} h_{j,k}e^{j \kappa}.
\] 
Then for each $(m,\alpha) \in \nn \times \nn^d$ we have 
\[
\left|[A \cdot h]_{m,\alpha}\right| = \left|\sum_{j=0}^{\infty} \sum_{\kappa \in \nn^d} [A^{j \kappa}]_{m,\alpha} \cdot h_{j,\kappa}\right|. 
\]
Applying this directly for each coordinate in $A\cdot h$ leads to the following estimate
\begin{align*}
\norm{A \cdot h}_{1} & = \sum_{m=0}^{\infty} \sum_{\alpha \in \nn^d}  \left|\sum_{j=0}^{\infty} \sum_{\kappa \in \nn^d} [A^{j \kappa}]_{m,\alpha} \cdot h_{j,\kappa}\right| \\
& \leq \sum_{m=0}^{\infty} \sum_{\alpha \in \nn^d}  
	\sum_{j=0}^{\infty} \sum_{\kappa \in \nn^d} \left| [A^{j \kappa}]_{m,\alpha} \right| \cdot \left| h_{j,\kappa}\right| \\
& \leq \sum_{j=0}^{\infty} \sum_{\kappa \in \nn^d} \left| h_{j,\kappa}\right|
	\sum_{m=0}^{\infty} \sum_{\alpha \in \nn^d} \left| [A^{j \kappa}]_{m,\alpha} \right| \\
& \leq \sum_{j=0}^{\infty} \sum_{\kappa \in \nn^d} \left| h_{j,\kappa}\right|
\norm{A^{j \kappa}}_{1}\\	
& \leq C \sum_{j=0}^{\infty} \sum_{\kappa \in \nn^d} \left| h_{j,\kappa}\right| \\
& = C
\end{align*}
and taking the supremum over all unit vectors in $\ell^1$ we have $\norm{A}_1 \leq C$. Conversely, for any $\epsilon > 0$ we may choose $(j,\kappa) \in \nn \times \nn^d$ such that $\norm{A^{j\kappa}}_1 > C - \epsilon$. It follows that 
\[
\norm{A}_1 \geq \norm{A \cdot e^{j,\kappa}}_1 > C - \epsilon
\]
and we conclude that $\norm{A}_1 \geq C$. 
\end{proof}

Next, we define specific linear operators which play an important role in the developments to follow. The first 
operator is the multiplication operator induced by an element in $\ell^1$. Specifically, 
for a fixed vector, $a \in \ell^1$, there exists a unique linear operator, $T_a$, whose action is given by 
\begin{equation}
\label{deftimesoperator}
T_a\cdot u = a*u 
\end{equation}
for every $u \in \ell^1$. 
With respect to the above basis we can write $T_a \cdot e^{j \kappa}$ explicitly as 
\[
[T_a^{j \kappa}]_{m,\alpha} = 
\left(
\begin{array}{cc}
a_{j-m,\kappa - \alpha} & (m,\alpha) \prec (j,\kappa) \\
0 & \text{otherwise}
\end{array}
\right)
\]
which can be verified by a direct computation. The second operator is a coefficient shift followed by padding with zeros, which we will denote by $\eta$. Its action on $u  \in \ell^1$ is given explicitly by
\begin{equation}
\label{defeta}
[\eta \cdot  u]_{m,\alpha} =
\left\{
\begin{array}{cc}
0 & \text{if }m=0  \\
u_{m-1,\alpha} & \text{if } m \geq 1 \\
\end{array}
\right.
\end{equation}
Additionally, we introduce the ``derivative'' operator whose action on vectors will be denoted by $'$. Its action on $u \in \ell^1$ is given by the formula
\begin{equation}
\label{defdiff}
[u']_{m,\alpha} =
\left\{
\begin{array}{cc}
u_{m,\alpha} & \text{if }m=0  \\
mu_{m,\alpha} & \text{if } m \geq 1 \\
\end{array}
\right.
\end{equation}
The usefulness in these definitions is made clear in Section \ref{sec:rigorousIntegration}. 

Finally, we introduce several properties of these operators which allow us to estimate their norms. The first is a generalization of the usual notion of a lower-triangular matrix to higher order tensors. 
\begin{prop}
We say an operator, $A \in \mathcal{L}(\ell^1)$, is {\em upper triangular} with respect to $\{e^{j \kappa}\}_{(j,\kappa) \in \nn \times \nn^d} $ if $A^{m \alpha} \in \operatorname{span} \{e^{j \kappa}: (j,\kappa) \prec (m,\alpha)\}$ for every $(m,\alpha) \in \nn \times \nn^d$. Then, each of the operators defined above is upper triangular. The proof for each operator follows immediately from their definitions. 
\end{prop}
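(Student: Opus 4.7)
The proof should proceed operator by operator, in each case taking a generic basis vector $e^{m\alpha}$ and computing $A \cdot e^{m\alpha}$ directly from the defining formula, then reading off which basis vectors can appear with nonzero coefficient. Because the operators are given by explicit formulas, no abstract machinery is needed; the entire argument is bookkeeping with multi-indices.

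For $T_a$, I would substitute $u = e^{m\alpha}$ into the Cauchy product formula to get
\[
[T_a \cdot e^{m\alpha}]_{j,\kappa} = [a * e^{m\alpha}]_{j,\kappa} = \sum_{(p,q) \prec (j,\kappa)} a_{p,q} [e^{m\alpha}]_{j-p,\kappa-q}.
\]
The delta $[e^{m\alpha}]_{j-p,\kappa-q}$ is nonzero only when $(j-p,\kappa-q) = (m,\alpha)$, which forces $p = j-m$ and $q = \kappa - \alpha$; in order for these to be valid nonnegative (multi-)indices one needs $(m,\alpha) \prec (j,\kappa)$. Thus only components indexed by $(j,\kappa) \succ (m,\alpha)$ survive, matching the triangularity hypothesis once one is careful about the direction of the ordering.

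For $\eta$, the computation is even more direct: the shift rule \eqref{defeta} applied to $u = e^{m\alpha}$ gives $[\eta\cdot e^{m\alpha}]_{j,\kappa} = [e^{m\alpha}]_{j-1,\kappa}$, which is nonzero precisely at $(j,\kappa) = (m+1,\alpha)$. Hence $\eta \cdot e^{m\alpha} = e^{m+1,\alpha}$, so the image of each basis vector is a single basis vector strictly above it in the order. For the derivative operator $'$, formula \eqref{defdiff} shows the operator is diagonal, with $' \cdot e^{m\alpha}$ equal to a scalar multiple of $e^{m\alpha}$ itself. Since the order $\prec$ is reflexive ($\alpha_i \leq \alpha_i$), the diagonal case is automatically covered.

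The main obstacle is essentially cosmetic: the paper's definition of $\prec$ as a componentwise partial order does not match the phrase ``lexicographic order'' used in the same sentence, and comparing the explicit entry formula for $T_a^{j\kappa}$ against the Cauchy product requires care with the direction of subtraction. I would resolve this at the outset by fixing the componentwise interpretation $(j,\kappa) \prec (m,\alpha) \iff j \leq m$ and $\kappa_i \leq \alpha_i$ for every $i$, and then the three verifications above go through as stated. Beyond this notational check there is nothing substantive to prove, which is consistent with the paper's remark that the result follows ``immediately from the definitions.''
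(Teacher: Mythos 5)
Your verification is correct and takes exactly the route the paper intends: the paper offers nothing beyond the remark that the claim ``follows immediately from the definitions,'' and your basis-vector computations are precisely that check, carried out correctly for $T_a$, $\eta$, and the derivative operator. Your caution about the direction of $\prec$ is warranted rather than cosmetic --- your Cauchy-product computation (support of $T_a\cdot e^{m\alpha}$ on indices $(j,\kappa)$ with $(m,\alpha)\prec(j,\kappa)$, and $\eta\cdot e^{m\alpha}=e^{(m+1)\alpha}$) is the correct one, so the operators are triangular on the opposite side of the diagonal from what the proposition's span condition literally says (and the paper's displayed formula for $[T_a^{j\kappa}]_{m,\alpha}$ is the transpose of the true one), but the substantive content --- that each operator's matrix is one-sided with respect to the order --- is exactly what you establish.
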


Next, we introduce notation for decomposing a vector $u \in \ell^1$ into its finite and infinite parts. 
Specifically, for fixed $(m,\alpha) \in \nn \times \nn^{d}$ we denote the finite truncation of $u \in \ell^1$ to $(m, \alpha)$-many terms (embedded in $\ell^1$) by 
\begin{equation}
\label{eq:truncDef}
u^{m\alpha} = 
\left\{
\begin{array}{cc}
u_{j,\kappa} & (j,\kappa) \prec (m,\alpha) \\
0 & \text{otherwise}
\end{array}
\right. ,
\end{equation}
and we define the infinite part of $u$ by $u^{\infty} = u - u^{m \alpha}$. From the point of view of Taylor series, $u^{m \alpha}$ are the coefficients of a polynomial approximation obtained by truncating $u$ to $m$ temporal terms and $\alpha_i$ spatial terms in the $i^{th}$ direction, and $u^{\infty}$ represents the tail of the Taylor series. With this notation we establish several useful estimates for computing norms in $\ell^1$. 
\begin{prop}
\label{prop:operatorbounds}
Fix $a \in \ell^1$ and suppose $u \in \ell^1$ is arbitrary. Then the following estimates hold for all $(m,\alpha) \in \nn \times \nn^d$.
\begin{eqnarray}
\norm{T_a \cdot u}_{1} \leq & \norm{a}_{1}  \norm{u}_{1} \\ 
\norm{\eta(u)}_{1} = &\norm{u}_{1} 
\end{eqnarray}
\end{prop}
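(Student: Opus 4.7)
The plan is to dispatch the two estimates separately, since each follows almost directly from definitions and tools already established in Section 2.1.

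For the first inequality, $\norm{T_a \cdot u}_1 \leq \norm{a}_1 \norm{u}_1$, the cleanest approach is to invoke the Banach algebra property of $(\ell^1, *)$ that was already recorded at the end of Section 2.1: for every pair $a, u \in \ell^1$, $\norm{a*u}_1 \leq \norm{a}_1 \norm{u}_1$. Since $T_a \cdot u = a*u$ by the defining formula \eqref{deftimesoperator}, this is immediate. If a more self-contained derivation is preferred, I would instead apply the preceding proposition expressing the operator norm as $\norm{T_a}_1 = \sup_{(j,\kappa)} \norm{T_a^{j\kappa}}_1$; using the explicit formula for $[T_a^{j\kappa}]_{m,\alpha}$, the reindexing $(p,\beta) = (j-m,\kappa-\alpha)$ gives
\[
\norm{T_a^{j\kappa}}_1 = \sum_{(m,\alpha)\prec (j,\kappa)} |a_{j-m,\kappa-\alpha}| = \sum_{(p,\beta)\prec (j,\kappa)} |a_{p,\beta}| \leq \norm{a}_1,
\]
and the supremum bound yields $\norm{T_a}_1 \leq \norm{a}_1$, from which the estimate on $T_a \cdot u$ follows.

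For the second equation, $\norm{\eta(u)}_1 = \norm{u}_1$, the proof is a one-line index shift. By the definition \eqref{defeta} of $\eta$, the $m=0$ slice of $\eta\cdot u$ is zero and the remaining slices reproduce $u_{m-1,\alpha}$. Therefore
\[
\norm{\eta \cdot u}_1 = \sum_{\alpha \in \nn^d} \sum_{m=1}^{\infty} |u_{m-1,\alpha}| = \sum_{\alpha \in \nn^d} \sum_{m'=0}^{\infty} |u_{m',\alpha}| = \norm{u}_1,
\]
where the middle step is the reindexing $m' = m - 1$. Because every term of $u$ appears exactly once in $\eta \cdot u$ (with the $m=0$ layer padded by zeros contributing nothing), equality holds, not merely an inequality.

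There is no substantive obstacle in either part; the content of the proposition is to flag these operator-norm facts for use in later sections. The only care required is bookkeeping with the distinguished space/time multi-index convention and making sure the Cauchy product formula for $T_a^{j\kappa}$ is interpreted correctly in the reindexed sum. Both estimates are tight: equality in the first is attained when $u$ is concentrated on a single basis vector $e^{j\kappa}$, and equality in the second is automatic from the computation above.
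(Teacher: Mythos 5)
Your proof is correct and is essentially the argument the paper has in mind: the paper omits the details, remarking only that "the proof is a straightforward computation," and your two computations (the Banach algebra bound $\norm{a*u}_1 \leq \norm{a}_1\norm{u}_1$ applied to $T_a\cdot u = a*u$, and the index shift showing $\eta$ is norm-preserving) are precisely that straightforward computation. Nothing further is needed.
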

\noindent The proof is a straightforward computation. 

\subsection{Product spaces}
In the preceding discussion we considered the vector space structure on $\ell^1$ and described linear operators on this structure. In this section, we recall that $\ell^1$ is an algebra, and therefore it is meaningful to consider vector spaces over $\ell^1$ where we consider elements of $\ell^1$ as ``scalars''. Indeed, an $n$-dimensional vector space of this form is the appropriate space to seek solutions to the invariance equation described in Section \ref{sec:parameterization} as well as IVPs which we describe in Section \ref{sec:rigorousIntegration}. To make this more precise we define
\begin{equation}
\label{defba1}
\mathcal{X} =  \left\{
\{u_{m,\alpha}^{(i)}\} \subset \cc^{d}: \sum\limits_{m = 0}^{\infty} \sum\limits_{\alpha \in \nn^d} |u_{m,\alpha}^{(i)}| < \infty \ \text{for all } 1\leq i\leq n
 \right\},
\end{equation}
and we recognize that an element $u\in \mathcal{X}$ defines a unique analytic function in $d$-many variables, taking values in $\cc^n$. We recall that the restriction of this function to a single coordinate defines a scalar analytic function with coefficients in $\ell^1$. Thus, $\mathcal{X}$ can be equivalently defined as an $n$-dimensional vector space over $\mathcal{X}$ given by
\begin{equation}
\label{defba2}
\mathcal{X} = \underbrace{\ell^1 \times \ell^1 \times \dots \ell^1 }_{n \text{-copies}} = (\ell^1)^{n},
\end{equation}
and a typical element $u \in \mathcal{X}$ takes the form $u = (u^{(1)},\dots,u^{(n)})$ with each $u^{(i)} \in \ell^1$. When solving nonlinear problems in $\mathcal{X}$, we will typically adopt the notation and point of view in Equation~\eqref{defba2}. Next, we equip $\mathcal{X}$ with the norm given by
\begin{equation}
\label{banorm}
\norm{u}_\mathcal{X} = \max_{1 \leq i \leq n} \{ ||u^{(i)}||_{1}\}.
\end{equation}
Finally, define multiplication in $\mathcal{X}$ componentwise. Specifically, if $u,v \in \mathcal{X}$, then each is an $n$-length vector of scalars from $\ell^1$ and the multiplication defined by
\begin{equation}
\label{batimes}
[u*v]_{m,\alpha} = ([u^{(1)}*v^{(1)}]_{m,\alpha}, \dots, [u^{(n)}*v^{(n)}]_{m,\alpha})
\end{equation}
makes $\mathcal{X}$ into a Banach algebra. The decomposition of $u \in \mathcal{X}$ into a finite projection and infinite tail is also defined componentwise. 

Let $\mathcal{L}(\mathcal{X})$ denote the vector space of linear operators on $\mathcal{X}$, and suppose $A \in \mathcal{L}(\mathcal{X})$. Since $\mathcal{X}$ is a finite dimensional vector space over $\ell^1$, it follows that for any fixed basis of $\mathcal{X}$ over $\ell^1$, we can identify $A$ with some $n \times n$ square matrix, $Q$, so that the action of $A$ on a vector $u \in \mathcal{X}$ is left multiplication by $Q$ which has the form
\[
Q = 
\left(
\begin{array}{cccc}
q_{(11)} & q_{(12)} & \dots & q_{(1n)}\\
q_{(21)} & q_{(22)} & \dots & q_{(2n)}\\
\vdots & \vdots & \ddots & \vdots\\
q_{(n1)} & q_{(n2)} & \dots & q_{(nn)}\\ 
\end{array}
\right)
\]
It is a standard result that 
the sup norm defined in Equation \eqref{banorm}
induces the operator norm given by 
\[
\norm{A}_{\mathcal{X}} = \max\limits_{1 \leq i \leq n} 
\left\{r_i : \ r_i = \sum_{j = 1}^{n} \norm{q_{(ij)}}_{1} 
\right\}.
\]
where $q_{(ij)}$ are bounded linear operators and we recall that $\norm{q_{(ij)}}_{1}$ denotes their operator norm.

\subsection{A-posteriori analysis for nonlinear operators between Banach spaces} \label{sec:aPos}
The discussion in Section \ref{sec:sevVar} motivates the approach to 
validated numerics/computer assisted proof
adopted below.  Let $d, n \in \mathbb{N}$ and consider a nonlinear operator 
$\Psi \colon C^\omega(\mathbb{D}^d)^n \to C^\omega(\mathbb{D}^d)^n$
(possibly with $\Psi$ only densely defined). 
Suppose that we want to solve the equation
\[
\Psi(f) = 0.
\] 
Projecting the $n$ components of $\Psi$ into sequence space results in an equivalent map $F \colon \left(\mathcal{S}_d\right)^n \to \left(\mathcal{S}_d\right)^n$ on the coefficient level. The transformed problem is truncated by simply restricting our attention to Taylor coefficients with order $0 \leq |\alpha| \leq N$ for some $N \in \mathbb{N}$. We denote by $F^N$ the truncated map. The problem $F^N = 0$ is now solved using any convenient numerical method, and we denote by $a^N$ the appropriate numerical solution, and by $\overbar{a} \in \mathcal{X}$ the infinite sequence which results from extending $a^N$ by zeros. 

We would like now, if possible, to prove that there is an $\tilde{a} \in \mathcal{X}$ near $\overbar{a}$, which satisfies $F(\tilde{a}) = 0$. 
Should we succeed, then by the discussion in Section \ref{sec:sevVar}, the function $f = (f_1, \ldots, f_n) \in \left(C^\omega(\mathbb{D}^d)\right)^n$ with Taylor coefficients given by $a$ is a zero of $\Psi$ as desired. The following proposition, which is formulated in general for maps between Banach spaces, provides a framework for implementing such arguments.

\begin{prop} \label{prop:newton}
Let $\mathcal{X}$, $\mathcal{Y}$ be Banach spaces and $F \colon \mathcal{X} \to \mathcal{Y}$
be a Fr\'{e}chet differentiable mapping.  
Fix $\overbar{a} \in \mathcal{X}$ and suppose there are bounded linear operators 
$A^{\dagger} \in \mathcal{L}(\mathcal{X}, \mathcal{Y})$,
$A \in \mathcal{L}(\mathcal{Y}, \mathcal{X})$, with $A$ one-to-one.
Assume that there are non-negative constants, $r,Y_0,Z_0,Z_1,Z_2$, 
satisfying the following bounds for all $x \in \overbar{B_r(\overbar{a})}$: 
\begin{eqnarray}
\label{Y0}
\normX{AF(\overbar{a})} \leq Y_0 \\
\label{Z0}
\normX{\text{Id} - AA^{\dagger}} \leq Z_0 \\
\label{Z1}
\normX{A(A^{\dagger}-DF(\overbar{a}))} \leq Z_1 \\
\label{Z2}
\normX{A(DF(x) - DF(\overbar{a}))} \leq Z_2 \normX{x-\overbar{a}} \\
\label{r}
Y_0 + (Z_0 + Z_1)r + Z_2r^2 < r.
\end{eqnarray}
Then there exists a unique $\tilde a \in B_{r}(\overbar{a})$
so that $F(\tilde a) = 0$. 
\end{prop}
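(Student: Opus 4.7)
The plan is to reformulate $F(\tilde{a}) = 0$ as a fixed-point problem for the Newton-like operator $T : \mathcal{X} \to \mathcal{X}$ defined by
$$T(x) := x - AF(x),$$
and then invoke the Banach contraction mapping theorem on the closed ball $\overline{B_r(\overbar{a})}$. Since $A$ is one-to-one, fixed points of $T$ and zeros of $F$ coincide, so it suffices to produce a unique fixed point of $T$ in $\overline{B_r(\overbar{a})}$.

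The central computation is to bound the Fr\'{e}chet derivative $DT(x) = \text{Id} - A\,DF(x)$ on $\overline{B_r(\overbar{a})}$ via the telescoping identity
$$DT(x) = (\text{Id} - AA^{\dagger}) + A(A^{\dagger} - DF(\overbar{a})) + A(DF(\overbar{a}) - DF(x)).$$
Applying the bounds \eqref{Z0}, \eqref{Z1}, \eqref{Z2} in succession and using $\normX{x - \overbar{a}} \leq r$ gives
$$\normX{DT(x)} \leq Z_0 + Z_1 + Z_2 r.$$
The radii polynomial inequality \eqref{r} forces $Z_0 + Z_1 + Z_2 r < 1$: if this quantity were at least $1$, then $(Z_0+Z_1)r + Z_2 r^2 \geq r$, and adding the non-negative $Y_0$ would contradict \eqref{r}. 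Hence the mean value inequality shows that $T$ is a strict contraction on $\overline{B_r(\overbar{a})}$.

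To confirm that $T$ maps $\overline{B_r(\overbar{a})}$ into itself I would estimate
$$\normX{T(x) - \overbar{a}} \leq \normX{T(x) - T(\overbar{a})} + \normX{AF(\overbar{a})} \leq (Z_0 + Z_1 + Z_2 r)\, r + Y_0 < r,$$
where the final inequality is precisely \eqref{r}. The Banach fixed-point theorem then yields a unique $\tilde{a} \in \overline{B_r(\overbar{a})}$ with $T(\tilde{a}) = \tilde{a}$; the strict inequality places $\tilde{a}$ in the open ball. Injectivity of $A$ upgrades $AF(\tilde{a}) = 0$ to $F(\tilde{a}) = 0$, completing the argument.

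There is no real obstacle here: this is a classical Newton--Kantorovich style argument. The only step that requires care is the telescoping decomposition of $\text{Id} - A\,DF(x)$, which makes transparent the role of $A^{\dagger}$ as an approximate derivative at $\overbar{a}$ and of $A$ as an approximate inverse; the four a-posteriori bounds then assemble into a single Lipschitz estimate on $DT$ and the self-mapping property, and the radii polynomial \eqref{r} is exactly what packages both conditions into one scalar inequality to be verified numerically.
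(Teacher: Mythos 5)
Your proposal is correct and follows essentially the same argument as the paper: the Newton-like operator $T(x) = x - AF(x)$, the telescoping decomposition of $DT$, the mean value inequality giving both the self-mapping and contraction estimates, and the Banach fixed point theorem with the strict inequality placing the fixed point in the open ball. The only cosmetic difference is that you extract the contraction constant as $Z_0+Z_1+Z_2r<1$ directly from \eqref{r}, whereas the paper writes it as $1 - Y_0/r$; both are valid consequences of the same hypothesis.
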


\begin{proof}
Consider the nonlinear operator 
$T \colon \mathcal{X} \to \mathcal{X}$ defined by 
\[
T(x) = x - A F(x).
\]
Since $A$ is one-to-one, $\tilde a \in \mathcal{X}$ is a zero of $F$ if and only if $\tilde a$ is a 
fixed point of $T$.  The idea of the proof is to use the Banach fixed point theorem 
to establish the existence of a unique fixed point in $B_{r}(\overbar{a})$.

Let $\text{Id}$ denote the identity map on $\mathcal{X}$, suppose $x \in \overbar{B_r(\overbar{a})}$,
and note that $DT(x) = \text{Id} - A DF(x)$.
Then
\begin{align*}
\normX{DT(x)} & = \normX{\text{Id} - ADF(x)}\\
	& = \normX{(\text{Id}-AA^{\dagger}) + A(A^{\dagger}-DF(\overbar{a})) + A(DF(\overbar{a}) - DF(x))} \\
	& \leq  \normX{\text{Id}-AA^{\dagger}} + \normX{A(A^{\dagger}-DF(\overbar{a}))} + \normX{A(DF(\overbar{a}) - DF(x))}.
\end{align*}
Taking this together with assumptions~\eqref{Z0}, \eqref{Z1}, and \eqref{Z2} we obtain the bound
\begin{equation}
\label{DT bound}
\sup_{x \in \overbar{B_r(\overbar{a})}} \normX{DT(x)} \leq Z_0 + Z_1 + Z_2r.
\end{equation}
Now, if $x \in \overline{B_r(\overbar{a})}$, 
then applying the bound~\eqref{Y0} and invoking the Mean Value Theorem yields the estimate
\begin{align}
\normX{T(x) - \overbar{a}} & \leq \normX{T(x) - T({\overbar{a}})} + \normX{T(\overbar{a}) - \overbar{a}} \nonumber \\
& \leq \sup_{x \in B_r(\overbar{a})} \normX{DT(x)}\cdot \normX{x - \overbar{a}} + \normX{AF(\overbar{a})} \nonumber \\
& \leq Y_0 + (Z_0 + Z_1)r + Z_2r^2 \nonumber \\
& < r \label{eq:proofStrict}
\end{align}
where the last inequality is due to Equation~\eqref{r}. This proves that $T$ maps $\overline{B_r(\overbar{a})}$
into itself.  In fact, $T$ sends $\overline{B_r(\overbar{a})}$ into 
$B_r(\overbar{a})$, by the strict inequality.

Finally, assume $x,y \in \overline{B_r(\overbar{a})}$ 
and apply the bound of Equation~\eqref{DT bound} with the Mean Value Theorem 
once more to obtain the contraction estimate 
\begin{align}
\normX{T(x) - T(y)} & \leq \sup_{x \in \overline{B_r(\overbar{a})}} \norm{DT(x)}_{\mathcal{X}} \cdot \norm{x - y}_{\mathcal{X}} \nonumber \\
& \leq (Z_0 + Z_1 + Z_2r) \norm{x-y}_{\mathcal{X}} \nonumber \\
& < \left(1 - \frac{Y_0}{r} \right)\norm{x-y}_{\mathcal{X}} 
\end{align}
where the second to last line follows from another application of Equation~\eqref{r} 
and the last line from noticing that $0 < {Y_0}/{r} < 1$. 
Since $1 - Y_0/r < 1$, the Contraction Mapping Theorem is satisfied 
on $\overline{B_r(\overbar{a})}$.  By the strict inequality of Equation \eqref{eq:proofStrict}
we conclude that $T$ has a unique fixed point  
$\tilde a \in B_r(\overbar{a})$ and  it follows that $\tilde a$ is the unique zero of $F$
in $\in B_r(\overbar{a})$.
\end{proof}

\begin{remark}
A few remarks on the intuition behind the terms appearing in the proposition are in order.
Intuitively speaking, $p(r) < 0$ occurs when $Y_0$, $Z_0$, $Z_1$ are small, and 
$Z_2$ is not too large.  Here $Y_0$ measures the defect associated with 
$\overbar{a}$ (i.e.\ $Y_0$ small means that we have a ``close'' approximate solution).  
We think of $A^\dagger$ as an approximation of the differential $DF(\overbar{a})$, 
and $A$ as an approximate inverse of $A^\dagger$.  Then $Z_0, Z_1$ measure the 
quality of these approximations. These approximations are used as it is 
typically not possible to invert $DF(\overbar{a})$ exactly.
Finally $Z_2$ is in some sense a measure
of the local ``stiffness'' of the problem.  For example $Z_2$ is often taken 
as any uniform bound on the second derivative of $F$ near $\overbar{a}$.
The choice of the operators $A, A^\dagger$ is problem dependent and best illustrated 
through examples.   Finally we remark that it is often unnecessary 
to specify explicitly the space $\mathcal{Y}$.  Rather, what is important is that 
for all $x \in \mathcal{X}$ we have that 
$A F(x) \in \mathcal{X}$ and that $A A^\dagger x \in \mathcal{X}$.
\end{remark}

\begin{remark} \label{rmk:radPoly}

Following
\cite{MR2338393, MR2443030, MR2630003, MR2776917}, we
exploit the radii polynomial method to organize the computer assisted
argument giving validated error bounds for our integrator. 
In short, this amounts to rewriting the contraction mapping condition above by defining the radii polynomial
\[
p(r) = Z_2r^2 + (Z_0 + Z_1 - 1)r + Y_0
\]
and noting that  the hypothesis of Proposition~\ref{prop:newton} in equation~\eqref{r} is satisfied for any $r> 0$ such that $p(r) < 0$. 
It follows that the minimum root of $p$ (if it exists) gives a sharp bound on the error, and if $p$ has distinct roots, $\{r_-,r_+\}$, then $p < 0$ on the 
entire interval $(r_-,r_+)$. The isolation bound $r_+$ is theoretically infinite, as the solutions of 
initial value problems are globally unique.  However the width of the interval $r_+ - r_-$ provides 
a quantitative measure of the difficulty of a given proof, as when this difference is zero the proof fails.
\end{remark}

\section{The parameterization method for (un)stable manifolds} \label{sec:parameterization}
The parameterization method is a general functional analytic framework for analyzing 
invariant manifolds, based on the idea of studying dynamical conjugacy 
relationships.  The method is first developed in a series of papers
\cite{MR1976079, MR1976080, MR2177465, MR2299977, MR2240743, MR2289544}.
By now there is a small but thriving community of researchers applying and 
extending these ideas, and a serious review of the literature would take us 
far afield.  Instead we refer the interested reader to the recent book 
\cite{mamotreto}, and turn to the task of reviewing as much of the method
as we use in the present work.

Consider a real analytic vector field $f \colon \mathbb{R}^n \to \mathbb{R}^n$, 
with $f$ generating a flow $\Phi \colon U \times \mathbb{R} \to \mathbb{R}^n$,
for some open set $U \subset \mathbb{R}^n$.
Suppose that $p \in U$ is an equilibrium solution, and let
$\lambda_1, \ldots, \lambda_d \in \mathbb{C}$ denoted the stable 
eigenvalues of the matrix $Df(p)$.  Let $\xi_1, \ldots, \xi_d \in \mathbb{C}^n$ 
denote a choice of
associated eigenvectors. In this section we write $B = B_1^d = \left\{ s \in \mathbb{R}^d \, : \,  \| s \| < 1 \right\}$,
for the unit ball in $\mathbb{R}^d$.  

The goal of the Parameterization Method is to solve the \textit{invariance equation}
\begin{equation}\label{eq:invEqnFlows1}
f(P(s)) = \lambda_1 s_1 \frac{\partial}{\partial s_1} P(s) + 
\ldots + \lambda_d s_d \frac{\partial}{\partial s_d} P(s),
\end{equation}
on $B$, 
subject to the first order constraints
\begin{equation} \label{eq:firstOrder1}
P(0) = p
\quad \quad \quad \quad \mbox{and} \quad \quad \quad \quad 
\frac{\partial}{\partial s_j} P(0) = \xi_j
\end{equation}
for $1 \leq j \leq d$.
From a geometric point of view, Equation \eqref{eq:invEqnFlows1}
says that the push forward by $P$ of the linear vector field generated by the 
stable eigenvalues is equal to the vector field $f$ restricted to the 
image of $P$.  In other words Equation \eqref{eq:invEqnFlows1}
provides an infinitesimal conjugacy between the stable linear
dynamics and the nonlinear flow, but only on the manifold parameterized
by $P$.  More precisely we have the  following Lemma.

\begin{figure}[t!]
\center{
  \includegraphics[width=0.6\linewidth]{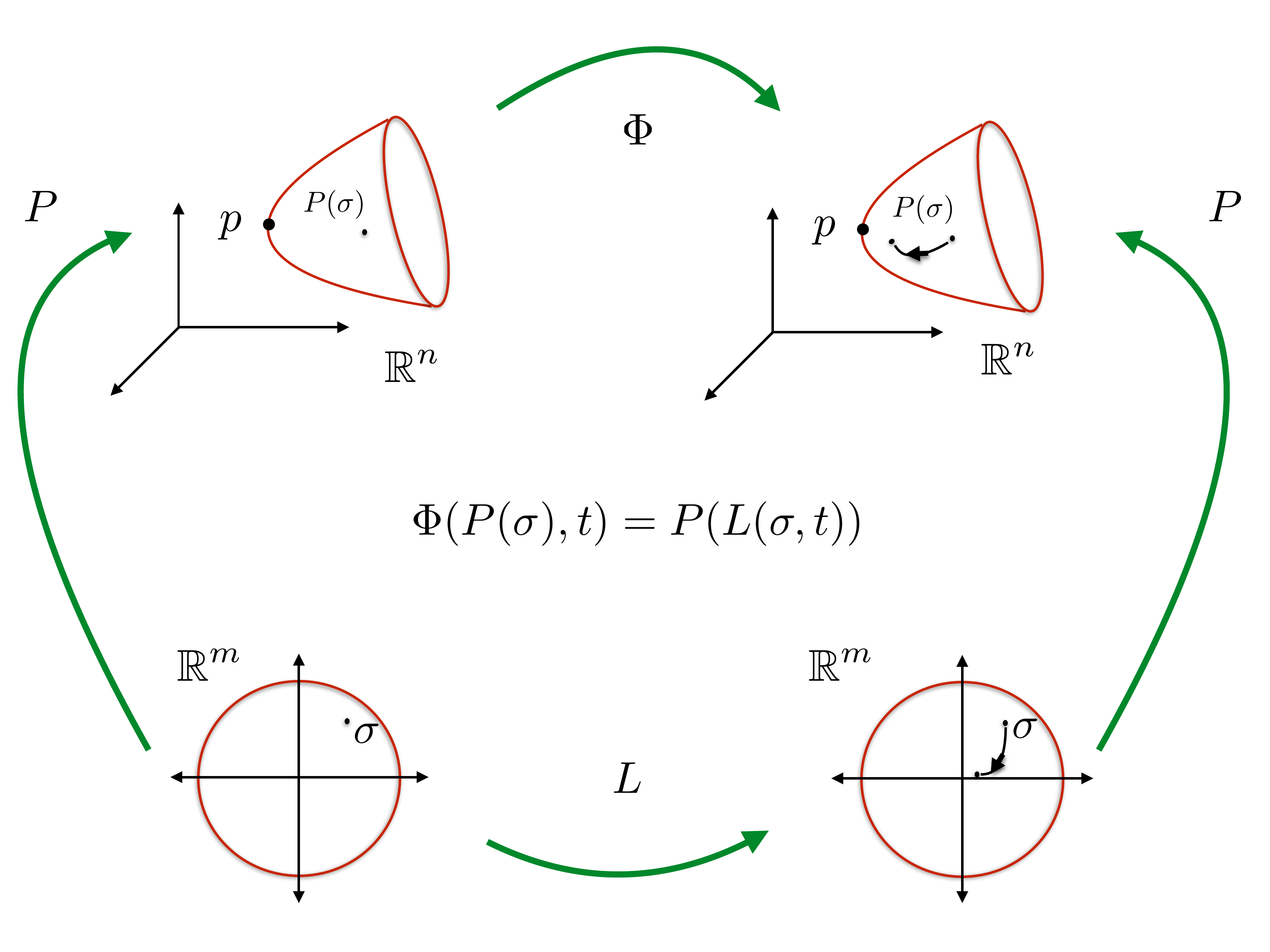}
}
\caption{Illustration of the flow conjugacy: the commuting diagram 
explains the geometric content of Equation \eqref{eq:conjEq1},
and explains the main property we want the parameterization $P$
to have.  Namely, we want that applying the linear flow $L$
in parameter space for a time $t$ and then lifting to the image of $P$ is 
the same as first lifting to the image of $P$ and then applying the 
non-linear flow $\Phi$ for time $t$.} \label{fig:conjugacyFP}
\end{figure}

\begin{lemma}[Parameterization Lemma] \label{lem:parmLemma1} 
{\em
Let $L \colon \mathbb{R}^d \times \mathbb{R} \to \mathbb{R}^d$
be the linear flow
\[
L(s,t) = \left(
e^{\lambda_1 t} s_1, \ldots, e^{\lambda_d t} s_d
\right).
\]
Let  $P \colon B \subset \mathbb{R}^d \to \mathbb{R}^n$
be a smooth function satisfying Equation \eqref{eq:invEqnFlows1}
on $B$ and subject to the constraints given by Equation \eqref{eq:firstOrder1}.
Then $P(s)$ satisfies the flow conjugacy 
\begin{equation} \label{eq:conjEq1}
\Phi(P(s), t) = P(L(s,t)),
\end{equation}
for all $t \geq 0$ and $s \in  B$.
}
\end{lemma}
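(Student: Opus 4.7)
The plan is to run the standard ODE-uniqueness proof for conjugacy lemmas of this type. Fix $s \in B$ and define two curves $\varphi(t) := \Phi(P(s),t)$ and $\psi(t) := P(L(s,t))$ in $\mathbb{R}^n$. I will show that both satisfy the same initial value problem $\dot u = f(u)$, $u(0) = P(s)$, and then invoke uniqueness of solutions of ODEs with locally Lipschitz right hand side.

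The curve $\varphi$ is by definition an integral curve of $f$ starting at $P(s)$, so $\dot{\varphi}(t) = f(\varphi(t))$ and $\varphi(0) = P(s)$. For $\psi$, the chain rule together with the fact that the $j$-th component of $L(s,\cdot)$ solves $\dot L_j = \lambda_j L_j$ gives
\begin{equation*}
\dot{\psi}(t) \;=\; \sum_{j=1}^d \lambda_j\, e^{\lambda_j t} s_j\, \frac{\partial P}{\partial s_j}\bigl(L(s,t)\bigr) \;=\; \sum_{j=1}^d \lambda_j\, L_j(s,t)\, \frac{\partial P}{\partial s_j}\bigl(L(s,t)\bigr).
\end{equation*}
The right hand side is exactly the right hand side of the invariance equation \eqref{eq:invEqnFlows1} evaluated at the point $L(s,t)$, so by hypothesis it equals $f(P(L(s,t))) = f(\psi(t))$. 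Since $\psi(0) = P(L(s,0)) = P(s) = \varphi(0)$, both curves satisfy the same IVP, and equality $\varphi \equiv \psi$ follows from local uniqueness together with a standard continuation argument on $[0,\infty)$.

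The one technical point that requires care, and which I would flag as the main obstacle, is verifying that the curve $L(s,t)$ remains in $B$ for all $t \geq 0$ so that the invariance equation \eqref{eq:invEqnFlows1} and the chain-rule identity are simultaneously valid along the whole forward trajectory. This is where the stability hypothesis on the spectrum is used: because $\operatorname{Re}\lambda_j < 0$, we have $|e^{\lambda_j t}| \leq 1$ for every $t \geq 0$, so $\|L(s,t)\| \leq \|s\| < 1$ and $L(s,\cdot)$ never leaves $B$ in forward time. (If the $\lambda_j$ are complex, one either passes to the complexification of $B$, or pairs conjugate eigenvalues into $2\times 2$ real blocks; in either case the norm estimate is the same.) Granted this containment, the rest of the argument is routine: $f$ is real analytic and hence locally Lipschitz, Picard–Lindelöf gives uniqueness on some $[0,\tau)$, and since $\psi$ is defined for all $t \geq 0$ with values in $P(B) \subset U$, continuation extends $\varphi = \psi$ to all $t \geq 0$, which is Equation~\eqref{eq:conjEq1}.
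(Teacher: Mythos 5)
Your proof is correct and is the standard argument for this lemma: the paper itself gives no proof (it defers to the reference cited immediately after the statement), but the expected argument is exactly the one you give --- both $t \mapsto \Phi(P(s),t)$ and $t \mapsto P(L(s,t))$ solve $\dot u = f(u)$, $u(0) = P(s)$, the latter by the chain rule plus the invariance equation \eqref{eq:invEqnFlows1} evaluated at $L(s,t)$, so they coincide by uniqueness of solutions. You also correctly flag and resolve the one delicate point, namely that $L(s,t)$ remains in $B$ for all $t \ge 0$ because the $\lambda_j$ are stable eigenvalues (so $|e^{\lambda_j t}| \le 1$), which is what keeps the invariance equation applicable along the entire forward trajectory.
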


\medskip

For a proof of the Lemma and more complete discussion we refer to 
\cite{fastSlow}.  The flow conjugacy described by Equation~\eqref{eq:conjEq1}
is illustrated pictorially in Figure \ref{fig:conjugacyFP}.
Note that $L$ is the flow generated by the vector field
\[
\frac{d}{dt} s_j = \lambda_j s_j, 
\quad \quad \quad \quad 1 \leq j \leq d,
\] 
i.e.\ the diagonal linear system with rates given by the 
stable eigenvalues of $Df(p)$.
Note also that the converse of the lemma holds, 
so that $P$ satisfies the flow conjugacy if and only if $P$ satisfies the 
infinitesimal conjugacy.
We remark also that $P$ is (real) analytic if $f$ is analytic 
\cite{MR1976080, MR2177465}.
 
Now, one checks that if $P$ satisfies the flow conjugacy 
given Equation \eqref{eq:conjEq1}, then
\[
P(B) \subset W^s(p),
\]
i.e.\ the image of $P$ is a local stable manifold.  This is seen 
by considering that
\[
\lim_{t \to \infty} \Phi(P(s), t) = \lim_{t \to \infty} P(L(s,t)) = p
\quad \quad \quad \mbox{for all } s \in  B \subset \mathbb{R}^d,
\]
which exploits the flow conjugacy, 
the fact that $L$ is stable linear flow, and that $P$ is continuous.

It can be shown that solutions of Equation \eqref{eq:invEqnFlows1}
are unique up to the choice of the scalings of the eigenvectors.
Moreover, on the level of the power series representation of the solution,
the scaling of the eigenvectors determines 
the decay rates of the Taylor coefficients of $P$.  
Proofs are found for example in \cite{MR2177465}.
These facts are used to show that, once we fix the domain of the praameterization to $B$,  
the solution $P$ parameterizes a larger or smaller local portion of the stable manifold
depending only on the choice of the eigenvector scalings.  
In practice this freedom in the choice in the scalings of the eigenvectors is exploited 
to stabilize numerical computations.  See for example   
\cite{maximeJPMe}.

The existence question for Equation \eqref{eq:invEqnFlows1} is 
somewhat more subtle.  While the stable manifold theorem guarantees
the existence of stable manifolds for a hyperbolic fixed point, 
Equation \eqref{eq:invEqnFlows1}
provides more.  Namely a chart map which recovers the dynamics
on the invariant manifold via a flow conjugacy relation.
It is not surprising then that some additional assumptions are necessary
in order to guarantee solutions of Equation \eqref{eq:invEqnFlows1}.

The necessary and sufficient conditions are given by considering 
certain \textit{non-resonance conditions} between the stable eigenvalues.
We say that the stable eigenvalues are \textit{resonant} if there exists an
$\alpha = (\alpha_1, \ldots, \alpha_d) \in \mathbb{N}^d$ so that
\begin{equation}\label{eq:defRes1}
\alpha_1 \lambda_1 + \ldots + \alpha_d \lambda_d = \lambda_j
\quad \quad \quad \quad \mbox{for some }  1 \leq j \leq d.
\end{equation}
The eigenvalues are \textit{non-resonant} if the condition 
given in Equation \eqref{eq:defRes1} fails for all $\alpha \in \mathbb{N}^d$.
Note that since $\lambda_j$, $\alpha_j$, $1 \leq j \leq d$ all have the same 
sign, there are only a finite number of opportunities for a resonance. Thus, 
in spite of first appearances, Equation \eqref{eq:defRes1} imposes only 
a finite number of conditions between the stable eigenvalues. 
The following provides necessary and sufficient conditions that some solution 
of Equation \eqref{eq:invEqnFlows1} exists.

\begin{lemma}[A-priori existence] \label{lem:existence1}
Suppose that $\lambda_1, \ldots, \lambda_d$ are non-resonant.  
Then there is an $\epsilon > 0$ such that
\[
\| \xi_j \| \leq \epsilon
\quad \quad \quad \quad \mbox{for each } 1 \leq j \leq d,
\]
implies existence of a solution to Equation \eqref{eq:invEqnFlows1} satisfying the constraints 
given by Equation \eqref{eq:firstOrder1}. 
\end{lemma}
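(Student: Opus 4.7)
The plan is to seek $P$ as a formal power series $P(s) = p + \sum_{|\alpha| \geq 1} p_\alpha s^\alpha$ and determine the coefficients $p_\alpha$ recursively from the invariance equation \eqref{eq:invEqnFlows1}. Splitting $f(x) = Df(p)(x - p) + N(x - p)$ with $N$ collecting all terms of order $\geq 2$, the terms of order $|\alpha| = 0, 1$ simply reproduce the constraints \eqref{eq:firstOrder1} together with the eigenvector equation $Df(p) \xi_j = \lambda_j \xi_j$. For $|\alpha| \geq 2$, matching the coefficient of $s^\alpha$ on both sides of \eqref{eq:invEqnFlows1} produces the cohomological equation
\[
\bigl(Df(p) - (\alpha \cdot \lambda) I\bigr) p_\alpha = R_\alpha, \qquad \alpha \cdot \lambda := \alpha_1 \lambda_1 + \cdots + \alpha_d \lambda_d,
\]
where $R_\alpha$ is a polynomial in $\{p_\beta : |\beta| < |\alpha|\}$ arising from the Cauchy products in $N(P - p)$.

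The first step is to verify that the operator on the left is invertible for every $|\alpha| \geq 2$, so that $p_\alpha$ is uniquely determined. Its eigenvalues are $\mu - \alpha \cdot \lambda$ as $\mu$ ranges over the spectrum of $Df(p)$. Unstable eigenvalues pose no obstruction since $\operatorname{Re}(\mu) > 0$ while $\operatorname{Re}(\alpha \cdot \lambda) < 0$; the stable eigenvalues $\mu = \lambda_j$ are handled precisely by the non-resonance hypothesis \eqref{eq:defRes1}, which says $\lambda_j - \alpha \cdot \lambda \neq 0$. Setting $m := \min_j |\operatorname{Re}(\lambda_j)| > 0$, one has $\operatorname{Re}(\alpha \cdot \lambda) \leq -m |\alpha|$, so $|\lambda_j - \alpha \cdot \lambda| \to \infty$ as $|\alpha| \to \infty$. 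Combined with the finitely many intermediate $\alpha$ (each nonzero by non-resonance), this yields a uniform bound $\|(Df(p) - (\alpha \cdot \lambda) I)^{-1}\| \leq C$ for all $|\alpha| \geq 2$.

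With the uniform estimate in hand, convergence of the formal series follows from a classical majorant argument. Substituting $\|p_\alpha\| \leq C \|R_\alpha\|$ into a majorant for the generating series $\sum_\alpha \|p_\alpha\| s^\alpha$ and using analyticity of $N$ to control the Cauchy products in $R_\alpha$ produces a geometric estimate $\|p_\alpha\| \leq K \rho^{|\alpha|}$ on some ball of radius $1/\rho$, provided the eigenvector scalings are sufficiently small. To reach the unit ball $B$, exploit the scaling symmetry of \eqref{eq:invEqnFlows1}: if $P$ satisfies it with eigenvectors $\xi_j$ on a ball of radius $r > 0$, then $P_\sigma(s) := P(\sigma s)$ satisfies it on the ball of radius $r/\sigma$ with eigenvectors $\sigma \xi_j$. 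Choosing $\sigma$ sufficiently small rescales the domain onto $B$ while shrinking $\|\sigma \xi_j\|$ below the desired $\epsilon$.

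The main obstacle is the quantitative control of the cohomological denominators $\lambda_j - \alpha \cdot \lambda$ and the propagation of their uniform invertibility through the majorant estimate on $R_\alpha$. Non-resonance alone only guarantees that each denominator is nonzero; it is the stability hypothesis, through the linear growth of $|\operatorname{Re}(\alpha \cdot \lambda)|$ in $|\alpha|$, that rules out any ``small divisor'' pathology and allows the geometric coefficient bound to close inductively. Once this bound is established, analyticity on the unit ball follows at once from completeness of $\mathcal{B}_d^1$, and the rescaling step is little more than book-keeping.
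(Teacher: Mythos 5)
Your outline is essentially correct, but it is worth noting that the paper does not prove this lemma at all: it defers to a substantially more general theorem for densely defined vector fields on Banach spaces in the cited reference (and to the map versions in the Cabr\'e--Fontich--de la Llave papers), where existence is obtained through a functional-analytic fixed-point/a-posteriori formulation rather than by direct series manipulation. What you give instead is the classical finite-dimensional argument: power matching yields the homological equations $\bigl(Df(p)-(\alpha\cdot\lambda)\mathrm{Id}\bigr)p_\alpha = R_\alpha$, non-resonance handles the finitely many low-order cases, and the stability of $\lambda_1,\ldots,\lambda_d$ gives $\operatorname{Re}(\alpha\cdot\lambda)\leq -m|\alpha|$ so the denominators grow linearly and the inverse operators are uniformly bounded --- correctly identifying that there is no small-divisor issue, which is exactly why a Poincar\'e-style Cauchy majorant argument closes and yields geometric coefficient bounds for sufficiently small eigenvector scalings; the final rescaling $P_\sigma(s)=P(\sigma s)$, which trades domain size against $\|\xi_j\|$, is a legitimate piece of bookkeeping consistent with the paper's own remarks on eigenvector scalings. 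The trade-off between the two routes: your argument is elementary and self-contained in the analytic, finite-dimensional setting relevant here, while the cited framework covers infinite-dimensional and a-posteriori versions and is the formulation the paper actually builds on for its validated bounds. The only place where you assert rather than execute is the majorant step itself (controlling $R_\alpha$ via a majorant for the nonlinearity and closing the induction); since that is the standard Cauchy-majorant computation and you have already secured the uniform bound on the inverses that it needs, I would not call this a gap, though in a complete write-up that estimate is where the real work lives.
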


A proof of a substantially more general theorem for densely defined 
vector fields on Banach spaces (which certainly covers the present case)
is found in \cite{parmPDE}.  Other general theorems (for maps on 
Banach spaces) are found in \cite{MR1976079, MR1976080, MR2177465}.
We note that in applications we would 
like to pick the scalings of the eigenvectors as large as possible, in 
order to parameterize as large a portion of the manifold as possible, 
and in this case we have no guarantee of existence.  This is motivates
the a-posteriori theory developed in
\cite{parmPDE, maximePOmanifolds, maximeJPMe}, which we 
utilize in the remainder of the paper.  

Finally, we note that even when the eigenvalues are resonant it is still possible 
to obtain an analogous theory by modifying the map 
$L$.  As remarked above, there can only be finitely many resonances 
between $\lambda_1, \ldots, \lambda_d$.  Then in the resonant case
$L$ can be chosen a polynomial which ``kills'' the resonant terms, i.e.\ 
we conjugate to a polynomial rather than a linear vector field in $\mathbb{R}^d$. 
Resonant cases are treated in detail in \cite{MR1976079, parmChristian}. 
Of course all the discussion above goes through for unstable manifolds
by time reversal, i.e.\ considering the vector field $- f$.

\subsection{Formal series solution of equation \eqref{eq:invEqnFlows1}}
In practical applications our first goal is to solve Equation \eqref{eq:invEqnFlows1}
numerically.  Again, it is shown in \cite{MR2177465} that if $f$ is analytic, then
$P$ is analytic as well.  Based on the discussion of the 
previous section we look for a choice of scalings of the 
eigenvectors and power series coefficients $p_\alpha \in \mathbb{R}^n$
so that 
\begin{equation}
P(s) = \sum_{\alpha \in \nn^d} p_\alpha s^\alpha,
\end{equation}
is the desired solution for $s \in  B$.

Imposing the linear constraints given in Equation \eqref{eq:firstOrder1}
leads to 
\[
p_0 = p
\quad \quad \quad \quad \mbox{and} \quad 
p_{\alpha_j} = \xi_j \mbox{ for } 1 \leq j \leq d.
\]
Here $0$ denotes the zero multi-index in $\mathbb{N}^d$, and 
$\alpha_j$ for $1 \leq j \leq d$ are the first-order multi-indices 
satisfying $|\alpha_j| = 1$. The remaining coefficients are determined by 
power matching. Note that 
\[
\lambda_1s_1 \frac{\partial}{\partial s_1} P(s) + \ldots +
\lambda_d s_d \frac{\partial}{\partial s_d} P(s) 
= \sum_{\alpha \in \nn^d} (\alpha_1 \lambda_1 + \ldots + \alpha_d \lambda_d) p_{\alpha} s^\alpha.
\]
Returning to Equation \eqref{eq:invEqnFlows1} we let 
\[
f[P(s)] = \sum_{\alpha \in \nn^d} q_\alpha s^\alpha, 
\]
so that matching like powers leads to the homological equations
\[
 (\alpha_1 \lambda_1 + \ldots + \alpha_d \lambda_d) p_{\alpha} - q_\alpha = 0,
\]
for all $|\alpha| \geq 2$.  Of course each $q_\alpha$ depends on $p_\alpha$ in 
a nonlinear way, and solution of the homological equations is best illustrated
through examples.  

\bigskip

\noindent \textbf{Example: equilibrium solution of Lorenz with two 
stable directions.}
Consider the Lorenz system defined by the vector field 
$f \colon \mathbb{R}^3 \to \mathbb{R}^3$ where
\begin{equation} \label{eq:LorenzField}
f(x,y,z) = 
\left(
\begin{array}{c}
\sigma (y - x) \\
x(\rho - z) - y \\
xy - \beta z
\end{array}
\right).
\end{equation}
For $\rho > 1$ there are three equilibrium points 
\[
p^0 = 
\left(
\begin{array}{c}
0 \\
0 \\
0
\end{array}
\right), 
\quad \quad \text{and} \quad \quad
p^{\pm} = 
\left(
\begin{array}{c}
\pm \sqrt{\beta(\rho - 1)} \\
\pm \sqrt{\beta(\rho-1)} \\
\rho - 1
\end{array}
\right).
\]
Choose one of the three fixed points above and denote it by 
$ p \in \mathbb{R}^3$. Assume that $Df(p)$ has two eigenvalues $\lambda_1, \lambda_2 \in \mathbb{C}$ of 
the same stability type (either both stable or both unstable) 
and assume that the remaining eigenvalue $\lambda_3$
has opposite stability. In this case we have $d = 2$ and the invariance equation is given by
\begin{equation} \label{eq:lorInvEqn}
\lambda_1 s_1 \frac{\partial}{\partial s_1} P(s_1, s_2)
+ \lambda_2 s_2 \frac{\partial}{\partial s_2} P(s_1, s_2) 
= f[P(s_1, s_2)],
\end{equation}
and we look for its solution in the form
\[
P(s_1, s_2) 
= \sum_{\alpha \in \nn^2} p_{\alpha} s^{\alpha}
= \sum_{\alpha_1 = 0}^\infty \sum_{\alpha_2 = 0}^\infty 
p_{\alpha_1,\alpha_2} s_1^{\alpha_1} s_2^{\alpha_2}
\]
where $p_{\alpha} \in \cc^3$ for each $\alpha \in \nn^2$. We write this in the notation from the previous section as $p = (p^{(1)},p^{(2)},p^{(3)}) \in \mathcal{X} = \ell^1 \times \ell^1 \times \ell^1$. 
Observe that 
\[
\lambda_1 s_1 \frac{\partial}{\partial s_1} P(s_1, s_2)
+ \lambda_2 s_2 \frac{\partial}{\partial s_2} P(s_1, s_2) 
= \sum_{\alpha \in \nn^2} (\alpha_1 \lambda_1 + \alpha_2 \lambda_2) p_{\alpha} s^{\alpha},
\]
and that 
\[
f(P(s_1, s_2)) = 
\sum_{\alpha \in \nn^2}
\left(
\begin{array}{c}
\sigma[p^{(2)} - p^{(1)}]_{\alpha} \\
\rho p^{(1)}_{\alpha} - p^{(2)}_{\alpha} - [p^{(1)}*p^{(3)}]_{\alpha} \\
-\beta p^{(3)}_{\alpha} + [p^{(1)}*p^{(2)}]_{\alpha}
\end{array}
\right)
s^{\alpha}.
\]
After matching like powers of $s_1, s_2$, it follows that solutions to Equation~\eqref{eq:lorInvEqn} must satisfy 
\begin{eqnarray*}
(\alpha_1 \lambda_1 + \alpha_2 \lambda_2) p_{\alpha} &=& 
\left(
\begin{array}{c}
	\sigma[p^{(2)} - p^{(1)}]_{\alpha} \\
	\rho p^{(1)}_{\alpha} - p^{(2)}_{\alpha} - [p^{(1)}*p^{(3)}]_{\alpha} \\
	-\beta p^{(3)}_{\alpha} + [p^{(1)}*p^{(2)}]_{\alpha}
\end{array}
\right) \\
&=& \left(
\begin{array}{c}
	\sigma[p^{(2)} - p^{(1)}]_{\alpha} \\
	
\rho p_{\alpha}^{(1)} - p_{\alpha}^{(2)} - p_{0,0}^{(1)} p_{\alpha}^{(3)} - p_{0,0}^{(3)} p_{\alpha}^{(1)} - 
\sum\limits_{\kappa \prec \alpha}
\widehat{\delta}_{\kappa}^{\alpha} p_{\alpha-\kappa}^{(1)} p_{\kappa}^{(3)} \\

-\beta p_{\alpha}^{(3)} + p_{0,0}^{(1)} p_{\alpha}^{(2)} + p_{0,0}^{(2)} p_{\alpha}^{(1)} + 
\sum\limits_{\kappa \prec \alpha}
\widehat{\delta}_{\kappa}^{\alpha} p_{\alpha-\kappa}^{(1)} p_{\kappa}^{(2)} \\
\end{array}
\right)
\end{eqnarray*}
where we define $\widehat{\delta}_{\kappa}^{\alpha}$ by
\[
\widehat{\delta}_{\kappa}^{\alpha} = \begin{cases}
0 & \mbox{ if } \kappa = \alpha \\
0 & \mbox{ if } \kappa = (0,0) \\
1 & \mbox{otherwise}
\end{cases}
\]
Note that the dependence on $p_{\alpha} = (p_{\alpha}^{(1)}, p_{\alpha}^{(2)}, p_{\alpha}^{(3)})$
is linear.  Collecting terms of order $|\alpha| = \alpha_1 + \alpha_2$ on the left and moving lower order
terms on the right gives this dependence explicitly as
{\footnotesize
\begin{eqnarray*}
 \left(
\begin{array}{ccc}
- \sigma -  (\alpha_1 \lambda_1 + \alpha_2 \lambda_2)  &  \sigma &   0 \\
\rho  - p_{0,0}^{(3)}  & - 1 - (\alpha_1 \lambda_1 + \alpha_2 \lambda_2) & - p_{0,0}^{(1)}   \\
 p_{0,0}^{(2)}   &  p_{0,0}^{(1)} &  - \beta -  (\alpha_1 \lambda_1 + \alpha_2 \lambda_2)
\end{array}
\right)
\left(
\begin{array}{c}
p_{\alpha}^{(1)} \\
p_{\alpha}^{(2)} \\
p_{\alpha}^{(3)}
\end{array}
\right) &=& 
 \left(
\begin{array}{ccc}
0 \\ 
\sum\limits_{\kappa \prec \alpha}
\widehat{\delta}_{\kappa}^{\alpha} p_{\alpha-\kappa}^{(1)} p_{\kappa}^{(3)} \\
- \sum\limits_{\kappa \prec \alpha}
\widehat{\delta}_{\kappa}^{\alpha} p_{\alpha-\kappa}^{(1)} p_{\kappa}^{(2)} 
\end{array}
\right)
\end{eqnarray*}
}
which is written more succinctly as
\begin{equation}\label{eq:hom2DFPLor}
\left[
Df(p) - (\alpha_1 \lambda_1 + \alpha_2 \lambda_2) \text{Id}_{\rr^3} \right] p_{\alpha} = q_{\alpha},
\end{equation}
where we define 
\[
q_{\alpha} = 
\left(
\begin{array}{c}
0 \\ 
\sum\limits_{\kappa \prec \alpha}
\widehat{\delta}_{\kappa}^{\alpha} p_{\alpha-\kappa}^{(1)} p_{\kappa}^{(3)} \\
- \sum\limits_{\kappa \prec \alpha}
\widehat{\delta}_{\kappa}^{\alpha} p_{\alpha-\kappa}^{(1)} p_{\kappa}^{(2)} 
\end{array}
\right).
\]
Writing it in this form emphasizes the fact that if $\alpha_1 \lambda_1 + \alpha_2 \lambda_2 \neq \lambda_{1,2}$, then the matrix on the left side of Equation~\eqref{eq:hom2DFPLor} is invertible, and the formal series solution $P$ is defined to all orders. In fact, fixing $N \in \nn$ and solving the homological equations for all $2 \leq |\alpha| \leq N$ leads to our numerical approximation
\[
P^{N}(s_1, s_2) = \sum_{\alpha_1=0}^N \sum_{\alpha_2=0}^{N- \alpha_1} p_{\alpha_1,\alpha_2} s_1^{\alpha_1} s_2^{\alpha_2}.
\]

\begin{remark}(Complex conjugate eigenvalues) When there are complex conjugate eigenvalues 
in fact none of the preceding discussion changes.  The only modification is that,
if we choose complex conjugate eigenvectors, then the coefficients will appear in 
complex conjugate pairs, i.e.\ 
\[
p_{\alpha} = \overline{p_{\alpha}}.
\]
Then taking the complex conjugate variables gives the parameterization of the 
real invariant manifold, 
\[
\hat{P}(s_1, s_2) := P(s_1 + i s_2, s_1 - i s_2),
\]
where $P$ is the formal series defined in the preceding discussion.
For more details see also \cite{MR2728178, MR3207723, MR3068557}.
\end{remark}


\subsection{Validated error bounds for the Lorenz equations}
The following lemma provides a means to 
obtain mathematically rigorous bounds on the truncation
errors associated with the formal series solutions discussed in the previous 
section.  The result is of an \textit{a-posteriori} variety, i.e.\ we first compute 
an approximation, and then check some conditions associated with the 
approximation.  If the conditions satisfy the hypotheses of the lemma then 
we obtain the desired error bounds. If the conditions are not satisfied, the validation fails and we are unable to make any rigorous statements.
The proof of the lemma is an application of the contraction 
mapping theorem.   

Let $\overbar{a}, \overbar{b}, \overbar{c}$, denote the formal series coefficients, 
computed to $N$-th order using the recursion scheme of the previous 
section, and let 
\[
P^N(s_1, s_2) = \sum_{|\alpha| = 0}^{N} \left(
\begin{array}{c}
\overbar{a}_{\alpha} \\
\overbar{b}_{\alpha} \\
\overbar{c}_{\alpha}
\end{array}
\right) s^{\alpha}.
\]
We treat here only the case where $Df(p)$ is diagonalizable, so that 
\[
Df(p) = Q \Sigma Q^{-1},
\]
with $\Sigma$ the $3\times 3$ diagonal matrix of eigenvalues and 
$Q$ the matrix whose columns are the eigenvectors.  We also assume that 
the eigenvalues are non-resonant, in the sense of Equation \eqref{eq:defRes1}.
We have the following lemma, whose proof is found in \cite{jayAMSnotes}.

\begin{lemma}[A-posteriori analysis for a two dimensional stable/unstable manifold in the Lorenz system]
Let $p \in \mathbb{R}^3$ be a fixed point of the Lorenz system and 
$\lambda_1, \lambda_2 \in \mathbb{C}$ be a pair of non-resonant 
stable (or unstable) eigenvalues of the differential at $p$. 
Assume we have computed $K^N < \infty$ satisfying
\[
K^N > \| Q \| \| Q^{-1}\|  \max_{j = 1,2,3}\sup_{|\alpha| \geq N+1} \left(
\frac{1}{\left| \alpha_1 \lambda_1 + \alpha_2 \lambda_2 - \lambda_j \right|}
\right),
\]
and define the positive constants
\[
Y_0 :=  K^N 
\left( \sum_{|\alpha| = N+1}^{2N} 
\left|[\overbar{a} * \overbar{b}]_{\alpha} \right| + 
\left|[\overbar{a} * \overbar{c}]_{\alpha} \right|
\right),
\]
\[
Z_1 := K^N \left( \sum_{1 \leq |\alpha| \leq N} 2 
\left| \overbar{a}_{\alpha} \right|  
+ \left| \overbar{b}_{\alpha} \right| + \left| \overbar{c}_{\alpha} \right|  \right),
\]
and 
\[
Z_2 := 4 K^N,
\]
and the polynomial 
\[
q(r) := Z_2 r^2 - (1 - Z_1) r + Y_0.
\]
If there exists a $\hat r > 0$ so that $q(\hat r) < 0$, then there exists 
a solution $P$ of Equation  \eqref{eq:invEqnFlows1}, analytic on 
$\mathbb{D}^2$, with
\[
\sup_{|s_1|, |s_2| < 1}  
\left\| P(s_1, s_2) - P^N(s_1, s_2) \right\|_{\mathbb{C}^3} \leq \hat{r}.
\]
\end{lemma}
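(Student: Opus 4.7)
The plan is to recast the invariance equation \eqref{eq:lorInvEqn} together with its first-order constraints as a zero-finding problem $F(x) = 0$ on the Banach algebra $\mathcal{X} = \ell^1\times\ell^1\times\ell^1$, and then to invoke Proposition~\ref{prop:newton} with $\overbar{x} = (\overbar{a},\overbar{b},\overbar{c})$ playing the role of $\overbar{a}$ there. Using the derivation leading to \eqref{eq:hom2DFPLor}, for each multi-index with $|\alpha|\ge 2$ I would set
\[
[F(x)]_\alpha := \bigl[Df(p) - (\alpha_1\lambda_1 + \alpha_2\lambda_2)\text{Id}_{\mathbb{R}^3}\bigr] x_\alpha - q_\alpha(x),
\]
where $q_\alpha(x)$ gathers the Cauchy products arising from the quadratic part of the Lorenz field, while the components with $|\alpha|\le 1$ are pinned to the values prescribed by \eqref{eq:firstOrder1}. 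Because $f$ is quadratic and $\ell^1$ is a Banach algebra under the Cauchy product, $F$ is a polynomial of degree two on $\mathcal{X}$, hence Fr\'echet differentiable with uniformly bounded second derivative.

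Next I would construct $A^{\dagger}$ and $A$ block-diagonally along the splitting $\mathcal{X} = \mathcal{X}^{\le N}\oplus\mathcal{X}^{>N}$ into Taylor coefficients of order at most $N$ and the tail. On the finite block, $A^{\dagger}$ is taken to be $DF(\overbar{x})$, and $A$ is a numerically computed inverse of that finite matrix. On the tail, both operators are set equal to the diagonal linear part $L_\alpha := Df(p) - (\alpha_1\lambda_1+\alpha_2\lambda_2)\text{Id}_{\mathbb{R}^3}$, so that $A$ acts coefficientwise by $L_\alpha^{-1}$. Diagonalizing $Df(p) = Q\Sigma Q^{-1}$ and using the non-resonance assumption yields
\[
\|L_\alpha^{-1}\| \le \|Q\|\,\|Q^{-1}\|\max_{j=1,2,3}\frac{1}{|\alpha_1\lambda_1+\alpha_2\lambda_2 - \lambda_j|},
\]
and taking the supremum over $|\alpha|\ge N+1$ recovers exactly $K^N$. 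By construction $AA^{\dagger}=\text{Id}$ on the tail, and on the finite block the only discrepancy is numerical round-off, so for the purposes of this clean statement one may take $Z_0 = 0$.

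The remaining bounds then arise from direct estimates using the Banach algebra inequality $\|u*v\|_1\le\|u\|_1\|v\|_1$. Because $\overbar{x}$ satisfies the homological equations through order $N$ and the Lorenz nonlinearity is quadratic, the defect $F(\overbar{x})$ is supported on $N+1\le|\alpha|\le 2N$, and its nonvanishing components are precisely the Cauchy products $[\overbar{a}*\overbar{c}]_\alpha$ and $[\overbar{a}*\overbar{b}]_\alpha$ coming from the $xz$ and $xy$ terms of the field; multiplying by the tail bound $K^N$ produces the stated $Y_0$. The difference $A^{\dagger}-DF(\overbar{x})$ vanishes on the finite block and, on the tail, reduces to the linear-in-$\overbar{x}$ correction arising from the quadratic part of $Df(P^N)-Df(p)$; identifying each entry with a multiplication operator $T_{\overbar{a}},T_{\overbar{b}},T_{\overbar{c}}$ (whose operator norm equals its symbol's $\ell^1$ norm by Proposition~\ref{prop:operatorbounds}) reproduces the coefficient $2|\overbar{a}_\alpha| + |\overbar{b}_\alpha| + |\overbar{c}_\alpha|$ in $Z_1$. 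Finally, since $f$ is quadratic, $DF$ is affine in $x$ and $DF(x)-DF(\overbar{x})$ is linear in $x-\overbar{x}$ with operator norm bounded by four multiplication operators (two each from $xz$ and $xy$), giving $Z_2 = 4K^N$.

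With $Z_0 = 0$ the polynomial produced by Proposition~\ref{prop:newton} collapses to $q(r) = Z_2 r^2 + (Z_1-1)r + Y_0$, matching the radii polynomial in the statement, so $q(\hat r)<0$ is exactly the required Newton--Kantorovich condition. Proposition~\ref{prop:newton} then produces a unique $\tilde x\in B_{\hat r}(\overbar{x})\subset\mathcal{X}$ with $F(\tilde x)=0$; its image under $\itt{\cdot}$ is an analytic function $P$ on $\mathbb{D}^2$ satisfying \eqref{eq:lorInvEqn}, and the sup-norm estimate $\|P-P^N\|_\infty\le\|\tilde x - \overbar{x}\|_\mathcal{X}\le\hat r$ follows from the elementary bound $\|f\|_\infty\le\|\TT{f}\|_1$ of Section~\ref{sec:sevVar}. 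The main obstacle I anticipate is the careful bookkeeping identifying the abstract tail norm of $A(A^{\dagger}-DF(\overbar{x}))$ and the Lipschitz piece of $DF$ with the explicit sums over $\overbar{a},\overbar{b},\overbar{c}$ recorded in the lemma; once that identification is in hand, the contraction-mapping framework of Proposition~\ref{prop:newton} finishes the argument.
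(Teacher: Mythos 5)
The paper itself does not supply a proof of this lemma; it defers entirely to the lecture notes \cite{jayAMSnotes}. So your blind attempt can only be judged on its own terms, and on those terms the overall strategy (cast the homological equations as $F(x)=0$ on $\mathcal{X}$, approximate $DF(\overbar{x})$ by the diagonal linear part $L_\alpha = Df(p) - (\alpha_1\lambda_1+\alpha_2\lambda_2)\mathrm{Id}$, invert it on the tail using $K^N$, and apply Proposition~\ref{prop:newton} / the radii polynomial) is the right one. Your identification of $Y_0$ with the Cauchy-product defect on orders $N+1,\dots,2N$ and of $Z_1$ with $K^N$ times the multiplication-operator norm $\|Df(P^N) - Df(p)\|$ is also essentially correct.

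There is, however, a genuine gap in the way you set up the approximate inverse. You build $A$ block-diagonally on the \emph{full} space $\mathcal{X}$, taking a numerical inverse of $DF^N(\overbar{x})$ on the finite block and $L_\alpha^{-1}$ on the tail, and then wave away $Z_0$. Two things break. First, $Z_0 = \normX{\mathrm{Id} - AA^\dagger}$ is \emph{not} zero with a floating-point $A^N$, and the stated radii polynomial has no $Z_0$ term, so this cannot just be assumed away. Second, and more seriously, the $Z_2$ bound. Writing $DF(x)-DF(\overbar{x}) = D^2F[h,\,\cdot\,]$ with $h = x - \overbar{x}$, this operator has range meeting both finite and tail blocks when $h$ is an arbitrary element of the ball in $\mathcal{X}$. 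Hence $\normX{A(DF(x)-DF(\overbar{x}))} \le \normX{A}\cdot \normX{D^2F[h,\,\cdot\,]}$ involves $\normX{A} = \max\{\normX{A^N},\, K^N\}$, and there is no reason whatsoever that $\normX{A^N}\le K^N$: near-resonances typically occur at small $|\alpha|$, so $\normX{A^N}$ can exceed $K^N$ by a large margin. Your claim $Z_2 = 4K^N$ silently uses $\normX{A}\le K^N$, which is not justified by your construction.

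The missing idea is that the finite-order equations are \emph{self-contained}: for $|\alpha|\le N$ the Cauchy products in $[F(x)]_\alpha$ involve only $x_\beta$ with $|\beta|\le N$, non-resonance makes $L_\alpha$ invertible there, and $\overbar{x}^N$ is chosen so that these equations are satisfied exactly. Consequently the true zero of $F$ agrees with $\overbar{x}$ to order $N$, the unknown correction $h$ lives in $\Pi^\infty\mathcal{X} = \{\text{support in } |\alpha|\ge N+1\}$, and the whole contraction argument should be carried out on that subspace (equivalently, on the affine slice $\overbar{x}+\Pi^\infty\mathcal{X}$). There one takes $A^\dagger = L$ and $A = L^{-1}$, which gives $AA^\dagger = \mathrm{Id}$ exactly (so $Z_0 = 0$ by construction, not by fiat) and $\normX{A}\le K^N$ everywhere. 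Restricted to tail vectors $h$, the operator $D^2F[h,\,\cdot\,]$ also maps into $\Pi^\infty\mathcal{X}$, so $Z_2 = \normX{A}\cdot\sup\normX{D^2F[h,\,\cdot\,]}/\normX{h}\le K^N\cdot 2$; the lemma's $4K^N$ is a slightly coarse version of this (you bounded the sum of all four multiplication operators rather than the max of the row sums, but both are valid upper bounds). With these repairs your outline becomes a correct proof, and the final $\sup$-norm inequality does follow from $\|f\|_\infty\le\|\mathcal{T}(f)\|_1$ exactly as you say.
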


\bigskip

\noindent \textbf{Example:}
Consider the two-dimensional stable manifold at the origin at the 
classical parameter values $\sigma = 10$, $\beta = 8/3$ and $\rho = 28$ in the Lorenz system. Using validated numerical algorithms discussed in  \cite{MR2652784}, 
and implemented in IntLab \cite{Ru99a}, we compute the following 
first order data.  The unstable eigenvalue satisfies
\[
\lambda^u \in [  11.82772345116345,  11.82772345116347],
\]
while the stable eigenvalues satisfy
\[
\lambda_1^s \in [  -2.66666666666667,  -2.66666666666666] ,
\]
and
\[
\lambda_2^s \in [ -22.82772345116347, -22.82772345116345].
\]
We compute corresponding eigenvectors that satisfy the inclusions
\[
\xi^u \in \left(
\begin{array}{c}
\,  [  -0.41650417819291,  -0.41650417819290]   \\
\, [  -0.90913380178490,  -0.90913380178489] \\
\, [  -0.00000000000001,   0.00000000000001]  \\
\end{array}
\right),
\]
\[
\xi_1^s = \left(
\begin{array}{c}
\,0 \\
\,0  \\
\,1
\end{array}
\right),
\]
and
\[
\xi_2^s \in \left(
\begin{array}{c}
\, [  -0.61481678521648,  -0.61481678521647]  \\
\, [   0.78866996938902,   0.78866996938903]  \\
\,0   \\
\end{array}
\right).
\]
Recursively solving the homological equations to order $N = 50$
yields the approximating polynomial, and also rules out resonances up to order fifty.  

Now, suppose that $\alpha \in \mathbb{N}^2$ with $|\alpha| \geq 51$.
Since $N = 50 > |\lambda_2^s| > |\lambda_1^s|$ we have that 
\[
\frac{1}{|\alpha_1 \lambda_1^s + \alpha_2 \lambda_2^s - \lambda_{1}^s|} \leq 
\frac{1}{|(\alpha_1 + \alpha_2) \lambda_1^s - \lambda_{1}^s|} = \frac{1}{|\alpha_1 + \alpha_2 - 1||\lambda_1^s|} \leq \frac{1}{50 |\lambda_1^s|} \leq 0.0075,
\]
\[
\frac{1}{|\alpha_1 \lambda_1^s + \alpha_2 \lambda_2^s - \lambda_{2}^s|} \leq 
\frac{1}{|(\alpha_1 + \alpha_2) \lambda_1^s - \lambda_{2}^s|} = \frac{1}{(\alpha_1 + \alpha_2) |\lambda_1^s| - |\lambda_2^s|} 
\leq \frac{1}{50 |\lambda_1^s|- |\lambda_2^s|} \leq 0.0089,
\]
and 
\[
\frac{1}{|\alpha_1 \lambda_1^s + \alpha_2 \lambda_2^s - \lambda^u|} \leq 
\frac{1}{|(\alpha_1 + \alpha_2) |\lambda_1^s| + |\lambda^u|} 
\leq \frac{1}{50 |\lambda_1^s|+ |\lambda^u|} \leq 0.0068.
\]
Thus, there are no resonances at any order, and from the enclosures of the eigenvectors we may take 
\[
K^N = 0.009.
\]
We scale the slow eigenvector to have length $15$ and the 
fast eigenvector to have length $1.5$ (as the difference in the 
magnitudes of the eigenvalues is about ten).
We obtain a validated contraction mapping error bound of 
$7.5 \times 10^{-20}$, which is below machine precision, but 
we need order $N = 50$ with this choice of scalings in 
order to get 
\[
Z_1 = 0.71 < 1.
\]
We note that we could take lower order and smaller scalings to validate a smaller
portion of the manifold.
The two dimensional validated local stable manifold at the origin is the one illustrated 
in Figures~\ref{fig:slowstable} and \ref{fig:faststable}.

\section{Validated integration of analytic surfaces} \label{sec:rigorousIntegration}
\label{sec:integrator}
Let $\Omega \subset \mathbb{R}^n$ be an open set and 
$f: \Omega \to \rr^n$ a real analytic vector field. Consider 
$\gamma \colon [-1,1]^{d-1} \to \rr^n$ a parameterized 
manifold with boundary. 
Recalling the summary of our scheme from Section~\ref{sec:intro}, we 
have in mind that $\gamma$ is a chart parameterizing a portion of
the boundary of $W^u_{\text{\tiny loc}}(p_0)$, transverse to the flow. 
Assume moreover that $\gamma \in \mathcal{B}_{d-1}^1$,
so that the Taylor coefficients of $\gamma$ are absolutely summable.
Define $\Gamma: [-1, 1]^d \to \rr^n$ the advected image of 
$\gamma$ given by $\Phi(\gamma(s),t) = \Gamma(s,t)$. 
We are especially interested in the case where 
$\Gamma \in \mathcal{B}_d^1$, however this will be a conclusion of 
our computer assisted argument rather than an assumption.

\subsection{Validated single step Taylor integrator}
\label{sec:onestep}
Numerical Taylor integration of the manifold $\gamma$ requires 
a finite representation, which we now describe.
Assume that $\gamma$ is specified as a pair, $(\hat{a},r_0)$, where $\hat{a}$ is a 
finite $\ell^1_{d-1}$ approximation of $\TT{\gamma}$ (i.e. a polynomial), 
and $r_0 \geq 0$ is a scalar error bound (norm in the $\ell^1_{d-1}$ topology). 
Second, we note that there is a technical issue of dimensions. To be more precise, 
let $a = \{a_{m,\alpha}\} = \mathcal{T}(\Gamma)$ denote the $d$-variable Taylor 
coefficients for the evolved surface. Recall from Section~\ref{sec:background} that the
 double indexing on $a$ allows us to distinguish between coefficients in the space or 
 time ``directions''. It follows that the appropriate space in which to seek solutions is 
 the product space $(\ell^1_{d})^n$. Strictly speaking however, $\TT{\gamma}$ is a 
 coefficient sequence in $(\ell^1_{d-1})^n$. Nevertheless, the 
fact that $\Gamma(s,0) = \gamma(s)$ implies that 
$\TT{\gamma} = \{a_{0,\alpha}\}_{\alpha \in \nn^{d-1}}$, and
 this suggests working in $\mathcal{X} = (\ell^1_{d})^n$ with the understanding that $\TT{\gamma}$ 
 has a natural embedding in $\mathcal{X}$ by padding with zeros in the time direction. 

In this context, our one-step integration scheme is an algorithm which takes input 
$(\hat{a},r_0,t_0)$ and produces output $(\overbar{a},r,\tau)$ satisfying
\begin{itemize}
	\item $\norm{\hat{a} - \TT{\gamma}}_{\mathcal{X}} < r_0$
	\item $\norm{\overbar{a} - \TT{\Gamma}}_{\mathcal{X}} < r$
\end{itemize}
In particular, we obtain a polynomial approximation, $\overbar{\Gamma} = \itt{\overbar{a}}$, 
which satisfies $\norm{\overbar{\Gamma}(s,t)-\Gamma(s,t)}_{\infty} < r$ for 
every $(s,t) \in \mathbb{D}^{d-1} \times [t_0, t_0 + \tau]$. For ease of exposition, 
we have also assumed that $f$ is autonomous therefore we may take $t_0 = 0$ without 
loss of generality. 

\subsubsection*{Numerical approximation}
The first step is a formal series calculation, which we validate a-posteriori.
Suppose $\tau > 0$ and $\Gamma$ 
satisfy the initial value problem
\begin{equation}
\label{eq:ivp}
\frac{d \Gamma}{dt} = f(\Gamma(s,t)) \qquad \Gamma(s,0) = \gamma(s)
\end{equation}
for all $(s,t) \in \mathbb{D}^{d-1} \times [0,\tau)$.  Write 
\[
\Gamma(s,t) = \sum_{m \in \nn} \sum_{\alpha \in \nn^{d-1}}
a_{m,\alpha} s^{\alpha}t^{m}.
\]
Evaluating both sides of \eqref{eq:ivp} leads to 
\begin{align}
\frac{\partial \Gamma}{\partial t} = & \sum_{m \in \nn} \sum_{\alpha \in \nn^{d-1}} ma_{m,\alpha} s^{\alpha}t^{m-1} \\
f(\Gamma(s,t)) = & \sum_{m \in \nn} \sum_{\alpha \in \nn^{d-1}} c_{m,\alpha} s^{\alpha}t^m,
\end{align}
where each $c_{m-1,\alpha}$ depends only on lower order terms in the set $\{ a_{j,\kappa} \colon (j,\kappa) \prec (m-1,\alpha)\}$. Satisfaction of the initial condition in \eqref{eq:ivp} implies $\Gamma(s,0) = \gamma(s)$ which leads to the relation on the coefficient level given by
\begin{equation}
\{a_{0,\alpha}\}_{\alpha \in \nn^{d-1}} =  \hat{a}. 
\end{equation}
Moreover, uniqueness of solutions to \eqref{eq:ivp} allows us to conclude that $\mathcal{T}(f \circ \Gamma) = \mathcal{T}(\frac{\partial \Gamma}{\partial t})$. This gives a recursive characterization for $a$ given by 
\begin{equation}
\label{eq:recursion}
ma_{m,\alpha} = c_{m-1,\alpha} \qquad m \geq 1,
\end{equation}
which can be computed to arbitrary order. Our approximation is now obtained by fixing a degree, $(m,\alpha) \in \nn \times \nn^{d-1}$, and computing $\overbar{a}_{j,\kappa}$ recursively for all $(j,\kappa) \prec (m,\alpha)$. This yields a numerical approximation to $(m,\alpha)^{th}$ degree Taylor polynomial for $\Gamma$ whose coefficients are given by $a^{m \alpha}$, and we define $\overbar{\Gamma} = \itt{\overbar{a}}$ to be our polynomial approximation of $\Gamma$. 

\remark{It should be emphasized that there is no requirement to produce the finite approximation using this recursion. In the case where it makes sense to use a Taylor basis for $C^\omega(\mathbb{D})$, this choice minimizes the error from truncation. However, the validation procedure described below does not depend on the manner in which the numerics were computed. Moreover, for a different choice of basis (e.g. Fourier, Chebyshev) there is no recursive structure available and an approximation is assumed to be provided by some means independent of the validation.}

\subsubsection*{Rescaling time}
Next, we rescale $\Gamma$ to have as its domain the unit polydisk. 
This rescaling provides control over the decay rate of the 
Taylor coefficients of $\Gamma$, giving a kind of numerical stability. As already mentioned
above, $\tau$ is an approximation/guess for the radius of convergence of $\Gamma$
before rescaling. In general $\tau$ is a-priori unknown and difficult to estimate for even a 
single initial condition, much less a higher dimensional surface of initial conditions. 
Moreover, suppose $\tau$ could be computed exactly by some method.  
Then $\Gamma$ would be analytic on the polydisc 
$\mathbb{D}^{d-1} \times \mathbb{D}_{\tau}$ 
which necessitates working in a weighted $\ell^1$ space. The introduction of weights 
to the norm destabilizes the numerics.  

Let $\gamma \in \mathcal{B}_d^1$ denote manifold of initial conditions of the local 
unstable manifold.   
Simply stated, the idea is to compute first the Taylor coefficients with no rescaling
and examine the numerical growth rate of the result.  The coefficients will decay/grow
exponentially with some rate we approximate numerically.  Growth suggests we are
trying to take  too long a time step -- decay suggests too short.  In either case we rescale
so that the resulting new growth rate makes our last coefficients small relative to the precision 
of the digital computer. 
 
More precisely, let $\mu$ denote the machine unit for a fixed precision floating point 
implementation(e.g. $\mu \approx 2^{-54} \approx 2.44 \times 10^{-16}$ for double 
precision on contemporary $64$ bit micro-processor architecture) 
and consider our initial finite numerical 
approximation as a coefficient vector of the form $\overbar{a} \approx a = \TT{\Gamma}$. 
Suppose it has degree $(M,N) \in \nn \times \nn^{d-1}$, and rewrite this polynomial 
after ``collapsing'' onto the time variable as follows: 
\[
\overbar{\Gamma}(s,t) =  \sum_{m = 0}^{M} \sum_{\alpha \prec N} a_{m,\alpha} s^{\alpha}t^m = 
 \sum_{m = 0}^M p_m(s)t^m 
\]
where $p_m(s)$ is a polynomial approximation for the projection of $\Gamma$ onto the $m^{th}$ 
term in the time direction. Note that $p_m$ may be identified by its coefficient vector 
given by $\TT{p_m} = \{\overbar{a}_{m,\kappa}\}_{\alpha \prec N}$. Now, we define 
\[
w = \max 
\left\{ 
\sum_{\alpha \prec N} \left| \overbar{a}^{(1)}_{M,\alpha} \right|,\dots, \sum_{\alpha \prec N} \left| \overbar{a}^{(n)}_{M,\alpha} \right|
\right\}
= \norm{\TT{p_M}}_{\mathcal{X}}
\]
and set  
\[
\label{eq:timerescaling}
L = (\frac{\mu}{w})^{\nicefrac{1}{M}}
\]
an approximation of $\tau$. In other words, we choose a time rescaling, $L$, which tunes 
our approximation so that for each coordinate of $\overbar{a}$, the $M^{th}$ coefficient 
(in time) has norm no larger than machine precision. This is equivalent to flowing by the 
time-rescaled vector field $f_L(x) = Lf(x)$. The standard (but crucial) observation 
is that the trajectories of the time rescaled vector field 
are not changed.  Therefore the advected image of $\gamma$ by $f_L$ still lies in the 
unstable manifold. It is also this time rescaling which permits us to seek solutions for $t \in [-1,1]$ since the time-1 map for the rescaled flow is equivalent to the time-$L$ map for the unscaled map. 

\subsubsection*{Error bounds for one step of integration}
Now define a function $F \in C^1(\mathcal{X})$ by
\begin{equation}
\label{eq:defF}
[F(x)]_{m,\alpha} = 
\left\{
\begin{array}{cc}
x_{0,\alpha} -  [\mathcal{T}(\gamma)]_{\alpha}  & m = 0\\
mx_{m,\alpha} - c_{m-1,\alpha} & m \geq 1
\end{array}
\right.
\end{equation}
where the coefficients $c_{m-1,\alpha}$ are given by $\mathcal{T} \circ f \circ \mathcal{T}^{-1}(x)$. 
Intuitively, $F$ measures how close the analytic function defined by $x$ comes to satisfying 
\eqref{eq:ivp}. Specifically, we notice that $F(x) = 0$ if and only if $\mathcal{T}^{-1}(x) = \Gamma$ 
or equivalently, $F(x) = 0$ if and only if $x = a$. 
We prove the existence of a unique solution of this equation in the infinite sequence space 
$\mathcal{X} = \left(\ell_d\right)^n$.  The corresponding function $\Gamma \in \mathcal{B}_d^1$
solves the initial value problem for the initial data specified by $\gamma$.  Moreover, the final 
manifold given by 
\[
\hat \gamma(s) := \Gamma(s, 1),
\]
has $\hat \gamma \in \mathcal{B}_{d-1}^1$.  Then the final condition $\hat \gamma$
is a viable initial condition for the next stage of validated integration. Further details are included in Section~\ref{sec:time stepping}.  

Finally, given an approximate solution of the zero finding problem for Equation~\eqref{eq:defF}
we develop a-posteriori estimates which allow us to conclude that there is a true solution 
nearby using Proposition~\ref{prop:newton}.  This involves choosing an approximate derivate
$A^\dagger$, an approximate inverse $A$, and derivation of the $Y_0$, $Z_0$, $Z_1$, 
and $Z_2$ error bounds for the application at hand.
The validation method is best illustrated in a particular example which will be taken up in the next section. 

\subsection{Examples and performance: one step of integration}
Recall the Lorenz field defined in Equation \eqref{eq:LorenzField}.
For the classical parameter values of $\rho = 28$, $\sigma = 10$ and $\beta = 8/3$
the three equilibria are hyperbolic with either two-dimensional stable or a two-dimensional 
unstable manifold. Then, in the notation of the previous section we have that 
$d = 2$ and $\mathcal{X} = \ell^1_2 \times \ell^1_2 \times \ell^1_2$. The boundaries of these 
manifolds are one-dimensional arcs whose advected image under the flow is a 
two-dimensional surface. We denote each as a power series by   
\begin{align}
\gamma(s) = & \sum_{\alpha =0}^{\infty} 
\left(
\begin{array}{c}
a_{0,\alpha}\\
b_{0,\alpha}\\
c_{0,\alpha}
\end{array}
\right)
s^\alpha \\
\Gamma(s,t) = & \sum_{m =0}^{\infty}  \sum_{\alpha =0}^{\infty} 
\left(
\begin{array}{c}
a_{m,\alpha}\\
b_{m,\alpha}\\
c_{m,\alpha}
\end{array}
\right)
s^\alpha t^m 
\end{align}
where $(s,t) \in [-1,1]^2$.
We write $\TT{\Gamma} = (a,b,c)$ and obtain its unique characterization in $\mathcal{X}$ by applying the recursion in \eqref{eq:recursion} directly which yields the relation on the coefficients given by
\begin{equation}
\label{eqn:1}
\left(
\begin{array}{cc}
a_{m+1,\alpha} \\
b_{m+1,\alpha} \\
c_{m+1,\alpha}
\end{array}
\right)
 = \frac{L}{m+1} 
\left(
\begin{array}{cc}
\sigma(b_{m,\alpha} - c_{m,\alpha})\\
{[\rho a - a*c]}_{m,\alpha} - b_{m,\alpha} \\
{[a*b]}_{m,\alpha} - \beta c_{m,\alpha}
\end{array}
\right).
\end{equation}
where $L$ is the constant computed in Equation~\eqref{eq:timerescaling}.
This recursion is used to compute a finite approximation denoted by 
$(\overbar{a},\overbar{b},\overbar{c}) \in \mathcal{X}$ with order $(M,N) \in \nn^2$. Next, we define the map 
$F\in C^1(\mathcal{X})$ as described in \eqref{eq:defF} and denote it by 
$F(x,y,z) = \left( F_1(x,y,z),F_2(x,y,z),F_3(x,y,z) \right)^T$ where $(x,y,z) \in \mathcal{X}$. 

Now, express $DF(\overbar{a},\overbar{b},\overbar{c})$ as a $3 \times 3$ block matrix of operators on $\ell^1$. 
Each block is an element in $\mathcal{L}(\ell^1)$, and its action on an arbitrary vector 
$h\in \ell^1$ is described in terms of the operators from Section~\ref{sec:linearAlgebra} as follows
\begin{align*}
D_1F_1(\overbar{a},\overbar{b},\overbar{c}) \cdot h & = h' + \sigma L \eta(h) \\
D_2F_1(\overbar{a},\overbar{b},\overbar{c}) \cdot h & = -\sigma L \eta(h) \\
D_3F_1(\overbar{a},\overbar{b},\overbar{c}) \cdot h & = 0 \\
D_1F_2(\overbar{a},\overbar{b},\overbar{c}) \cdot h & = -L\eta(\rho h - c*h) \\
D_2F_2(\overbar{a},\overbar{b},\overbar{c}) \cdot h & = h' + L\eta(h) \\
D_3F_2(\overbar{a},\overbar{b},\overbar{c}) \cdot h & = L\eta(a*h) \\
D_1F_3(\overbar{a},\overbar{b},\overbar{c}) \cdot h & = -L\eta(b*h) \\
D_2F_3(\overbar{a},\overbar{b},\overbar{c}) \cdot h & = -L\eta(a*h) \\
D_3F_3(\overbar{a},\overbar{b},\overbar{c}) \cdot h & = h' + \beta L \eta(h). \\
\end{align*}
Recalling the notation from Section~\ref{sec:linearAlgebra}, we will denote these nine operators by 
\[
DF_{(ij)}(\overbar{a},\overbar{b},\overbar{c}) = D_jF_i(\overbar{a},\overbar{b},\overbar{c}). 
\]

\subsection{A-posteriori analysis for the rigorous integrator in Lorenz}
We now describe the application of the a-posteriori validation method described in Section \ref{sec:aPos} to the rigorous integrator for the Lorenz example. This requires specifying appropriate linear operators, $A,A^{\dagger}$, and constants, $r,Y_0,Z_0,Z_1,Z_2$, which allow application of Proposition \ref{prop:newton} for the Lorenz integrator. The error bounds in the examples of Section \ref{sec:Lorenzresults} are then obtained by applying the Radii polynomial method described in Remark~\ref{rmk:radPoly}. 

\subsubsection*{Defining $A^{\dagger}$}
We specify $A^{\dagger}$ to be an approximation of $DF(\overbar{a},\overbar{b},\overbar{c})$ which is diagonal in the ``tail''. Specifically, $DF_{(ij)}^{MN}(\overbar{a},\overbar{b},\overbar{c})$ denotes the truncation of $DF(\overbar{a},\overbar{b},\overbar{c})$ and we define $A^{\dagger}$ to be the $3 \times 3$ block of operators whose action on a 
vector $h \in \ell^1$ is given by 
\[
[A_{(ij)}^{\dagger}\cdot h]_{m,\alpha} = 
\left\{
\begin{array}{cc}
[DF_{(ij)}^{MN}(\overbar{a},\overbar{b},\overbar{c}) \cdot h]_{m,\alpha} & (m,\alpha) \prec(M,N) \\
m h_{m,\alpha} & (m,\alpha) \succ(M,N), \ i = j \\
0 & \text{otherwise}
\end{array}
\right.
\]
In other words, the finite part of the action of $A^{\dagger}$ is determined by the finite part of $DF(\overbar{a},\overbar{b},\overbar{c})$ and the infinite part along the diagonal is given by the derivative operator defined in Section \ref{sec:linearAlgebra}. 

\subsubsection*{Defining $A$}
The operator $A$ is an approximation for the inverse of $DF(\overbar{a},\overbar{b},\overbar{c})$. For this example, we have used an approximate inverse for $A^{\dagger}$ instead which motivates our choice for the tail of $A^{\dagger}$. Specifically, the finite part of $A$ is obtained by numerically inverting $DF^{MN}(\overbar{a},\overbar{b},\overbar{c})$, and $A$ acts on the tail of vectors in $\mathcal{X}$ by scaling the diagonal coordinates by $\frac{1}{m}$. 

\subsubsection*{$Y_0$ bound}
We decompose $F$ as 
\[
F (\overbar{a},\overbar{b},\overbar{c}) = F^{MN} (\overbar{a},\overbar{b},\overbar{c})+ F^{\infty} (\overbar{a},\overbar{b},\overbar{c}),
\]
where $F^{MN}$ and $F^\infty$ are as defined in Equation~\eqref{eq:truncDef}.
Note that if $(m,\alpha) \succ (M,N)$, then $\overbar{a}_{m,\alpha} = \overbar{b}_{m,\alpha} = \overbar{c}_{m,\alpha} = 0$, and thus the only nonzero contributions to $F(\overbar{a},\overbar{b},\overbar{c})^{\infty}$ are due to higher order terms from $\TT{\gamma}$, or the Cauchy products of low order terms due to the nonlinearity. Specifically, we have the following: 
\begin{align*}
{[F_1^{\infty}(\overbar{a},\overbar{b},\overbar{c})]_{m,\alpha}} & = 
\left(
\begin{array}{cc}
-[a]_{0,\alpha} & m = 0 \\
0 & \text{otherwise} 
\end{array}
\right) \\
{[F_2^{\infty}(\overbar{a},\overbar{b},\overbar{c})]_{m,\alpha}} & = 
\left(
\begin{array}{cc}
{[\overbar{a}*\overbar{c}]_{0,\alpha}}-[b]_{0,\alpha} & m = 0 \\
{[\overbar{a}*\overbar{c}]_{m,\alpha}} & \text{otherwise} 
\end{array}
\right) \\
{[F_3^{\infty}(\overbar{a},\overbar{b},\overbar{c})]_{m,\alpha}} &  = 
\left(
\begin{array}{cc}
{[\overbar{a}*\overbar{b}]_{0,\alpha}}-[c]_{0,\alpha} & m = 0 \\
{[\overbar{a}*\overbar{b}]_{m,\alpha}} & \text{otherwise} 
\end{array}
\right) 
\end{align*}
where we also note that for all $(m,\alpha) \succ (2M,2N)$, we have $[\overbar{a}*\overbar{c}]_{m,\alpha} = 0 = [\overbar{a}*\overbar{b}]_{m,\alpha}$. Recalling the definition of the operator $A$, we also have $A_{(ij)}^{m \alpha} = 0$ for $i \neq j$ and $(m,\alpha) \succ (M,N)$. Combining these observations leads to defining the following constants 
\[
Y_1 = \norm{[A^{MN}F^{MN}(\overbar{a},\overbar{b},\overbar{c})]_1}_{1} + \norm{a_{0,\alpha}^{\infty}}_{1}
\]
\[
Y_2 = \norm{[A^{MN}F^{MN}(\overbar{a},\overbar{b},\overbar{c})]_2}_{1} + \sum_{m = M+1}^{2M} \frac{1}{m} \sum_{\alpha = N+1}^{2N} [\overbar{a}*\overbar{c}]_{m,\alpha} + \norm{b_{0,\alpha}^{\infty}}_{1}
\]
\[
Y_3 = \norm{[A^{MN}F^{MN}(\overbar{a},\overbar{b},\overbar{c})]_3}_{1} + \sum_{m = M+1}^{2M} \frac{1}{m} \sum_{\alpha = N+1}^{2N} [\overbar{a}*\overbar{b}]_{m,\alpha} + \norm{c_{0,\alpha}^{\infty}}_{1}
\]
and we conclude that 
\begin{equation}
\label{sec:Lorenz_Y0}
\norm{AF(\overbar{a},\overbar{b},\overbar{c})}_{\mathcal{X}} \leq \max \{Y_1,Y_2,Y_3\} := Y_0.
\end{equation}

\subsubsection*{$Z_0$ bound}
We will define the constant $Z_0 := \norm{\text{Id}_{\mathcal{X}}^{MN} - A^{MN}DF^{MN}(\overbar{a},\overbar{b},\overbar{c})}_{\mathcal{X}}$ and we claim that $\norm{\text{Id}_{\mathcal{X}} - AA^{\dagger}}_{\mathcal{X}} \leq Z_0$. This follows directly from the computation
\[
AA^{\dagger} = 
\left(
\begin{array}{cc}
A^{MN}DF^{MN}(\overbar{a},\overbar{b},\overbar{c}) & 0 \\
0 & \text{Id}_{\mathcal{X}} 
\end{array}
\right)
\]
where the expression on the right is a block matrix of operators in $\mathcal{L}(\mathcal{X})$.
Therefore, we have 
\[
\norm{\text{Id}_{\mathcal{X}} - AA^{\dagger}}_{\mathcal{X}} = \norm{\text{Id}_{\mathcal{X}}^{MN} - A^{MN}DF^{MN}(\overbar{a},\overbar{b},\overbar{c})}_{\mathcal{X}} = Z_0
\]
and we note that our choice of $A$ and $A^\dagger$ are (partially) motivated by requiring that this estimate reduces to a finite dimensional matrix norm which is rigorously computable using interval arithmetic. 

\subsubsection*{$Z_1$ bound}
We define the $Z_1$ constant for Lorenz  
\[
Z_1 := \frac{L}{M} \max\{
2 \sigma, \rho + \norm{\overbar{c}}_1 + 1 + \norm{\overbar{a}}_{1}, \norm{\overbar{b}}_1 + \norm{\overbar{a}}_1 + \beta 
\}
\]
and recalling Proposition \ref{prop:newton} we must prove that $\norm{A(A^{\dagger} - DF(\overbar{a},\overbar{b},\overbar{c}))}_{\mathcal{X}} \leq Z_1$. \\
Suppose $(u,v,w)^T$ is a unit vector in $\mathcal{X}$ and define 
	\begin{align*}
	(u_1,v_1,w_1) & = (A^{\dagger} - DF(\overbar{a},\overbar{b},\overbar{c}))\cdot (u,v,w)^T \\
	& = 
	\left(
	\begin{array}{ccc}
	A_{(11)}^{\dagger} - D_1F_1(\overbar{a},\overbar{b},\overbar{c}) & A_{(12)}^{\dagger} - D_2F_1(\overbar{a},\overbar{b},\overbar{c}) & A_{(13)}^{\dagger} - D_3F_1(\overbar{a},\overbar{b},\overbar{c}) \\
	A_{(21)}^{\dagger} - D_1F_2(\overbar{a},\overbar{b},\overbar{c}) & A_{(22)}^{\dagger} -  D_2F_2(\overbar{a},\overbar{b},\overbar{c}) & A_{(23)}^{\dagger} - D_3F_2(\overbar{a},\overbar{b},\overbar{c}) \\
	A_{(31)}^{\dagger} - D_1F_3(\overbar{a},\overbar{b},\overbar{c}) & A_{(32)}^{\dagger} -  D_2F_3(\overbar{a},\overbar{b},\overbar{c}) & A_{(33)}^{\dagger} - D_3F_3(\overbar{a},\overbar{b},\overbar{c}) \\
	\end{array}
	\right)
	\cdot 
	\left(
	\begin{array}{c}
	u \\
	v \\
	w
	\end{array}
	\right)
	\end{align*}
and note that if $(m,\alpha) \prec (M,N)$ then 
$(A_{(ij)}^{\dagger})^{m\alpha} = (D_jF_i^{MN}(\overbar{a},\overbar{b},\overbar{c}))^{m\alpha}$ for all 
$i,j \in \{1,2,3\}$ and thus $(u_1,v_1,w_1)^{MN} = (0,0,0)$. 

\noindent {\bf Computing ${\bf u_1}$}: 
	Recalling the expressions for the blocks of $DF(\overbar{a},\overbar{b},\overbar{c})$ we have
	\begin{eqnarray*}
		D_1F_1(\overbar{a},\overbar{b},\overbar{c})\cdot u = & u' + \sigma L \eta(u) \\
		D_2F_1(\overbar{a},\overbar{b},\overbar{c}) \cdot v = & -\sigma L \eta(v) \\
		D_3F_1(\overbar{a},\overbar{b},\overbar{c}) \cdot w = & 0
	\end{eqnarray*}
	After canceling the contribution from $A_{(11)}^{\dagger}$ and summing the remainders we obtain the expression for $u_1$
	\[
u_1 = L \sigma \eta (u-v)^{\infty}.
	\]
	{\bf Computing $\bf {v_1}$}: \\ 
	We proceed similarly with the second row in order to compute $v_1$. 
	\begin{eqnarray*}
		D_1F_2(\overbar{a},\overbar{b},\overbar{c})\cdot u = & -L \eta(\rho u - \overbar{c}*u) \\
		D_2F_2(\overbar{a},\overbar{b},\overbar{c}) \cdot v = & L \eta(v) \\
		D_3F_2(\overbar{a},\overbar{b},\overbar{c}) \cdot w = & L \beta \eta(w)
	\end{eqnarray*}
	and canceling the diagonal and adding as before we obtain
	\[
	v_1 = L\eta(-\rho u - \overbar{c}*u + v + \overbar{a}*w)^{\infty}.
	\]
	{\bf Computing $\bf {w_1}$}: \\ 
	Computing along the third row in the same manner we have
	\begin{eqnarray*}
		D_1F_3(\overbar{a},\overbar{b},\overbar{c})\cdot u = & -L\eta(\overbar{b}*u)  \\
		D_2F_3(\overbar{a},\overbar{b},\overbar{c}) \cdot v = & -L\eta(\overbar{a}*v) \\
		D_3F_3(\overbar{a},\overbar{b},\overbar{c}) \cdot w = & L\beta \eta(w)
	\end{eqnarray*}
	and thus after cancellation
	\[
	w_1 = L \eta (\overbar{b}*u - \overbar{a}*v + \beta w)^{\infty}.
	\]
	Next, we define $(u_2,v_2,w_2) \in \mathcal{X}$ by 
	\[
	(u_2,v_2,w_2)^T = A\cdot (u_1,v_1,w_1)^T = 
	\left(
	\begin{array}{ccc}
	A_{(11)} & A_{(12)} & A_{(13)} \\
	A_{(21)} & A_{(22)}  & A_{(23)} \\
	A_{(31)} & A_{(32)}  & A_{(33)} \\
	\end{array}
	\right)
	\cdot 
	\left(
	\begin{array}{c}
	u_1\\
	v_1 \\
	w_1
	\end{array}
	\right)
	\]
	and recall that $(u_1,v_1,w_1)^{MN} = (0,0,0)$ so if $(m,\alpha) \prec (M,N)$, then any non-zero contributions to $[(u_2,v_2,w_2)^{MN}]_{m,\alpha}$ must come from $A^{j\kappa}$ where $(j,\kappa) \succ (M,N)$. However, since each block of $A$ is diagonal in the tail, it follows that there are no non-zero contributions from these terms so we conclude that $(u_2,v_2,w_2)^{MN} = (0,0,0)$ as well. Moreover, if $i \neq j$ and $(m,\alpha) \succ (M,N)$, then $A_{(ij)}^{m\alpha} = 0$ which yields bounds on  $\norm{u_2}_1,\norm{v_2}_1,\norm{w_2}_1$ given by
	\begin{align*}
	\norm{u_2}_1
	& = \norm{A_{(11)} \cdot u_1 + A_{(12)} \cdot v_1 + A_{(13)} \cdot w_1}_1\\
	& \leq \norm{\underbrace{A_{(11)} \cdot u_1}_{L \sigma \eta (u-v)^{\infty}}}_1 
	+ \norm{\underbrace{A_{(12)} \cdot v_1}_{0_{\ell^1}}}_1
	+ \norm{\underbrace{A_{(13)} \cdot w_1}_{0_{\ell^1}}}_1 \\
	& \leq \frac{L \sigma}{M} \norm{u-v}_1\\
	& \leq \frac{2 L \sigma}{M}
	\end{align*}
	
	\begin{align*}
	\norm{v_2}_1 
	& = \norm{A_{(11)} \cdot u_1 + A_{(12)} \cdot v_1 + A_{(13)} \cdot w_1}_1\\
	& \leq \norm{\underbrace{A_{(11)} \cdot u_1}_{0_{\ell^1}}}_1 
	+ \norm{\underbrace{A_{(12)} \cdot v_1}_{ L\eta(-\rho u - \overbar{c}*u + v + \overbar{a}*w)^{\infty}}}_1
	+ \norm{\underbrace{A_{(13)} \cdot w_1}_{0_{\ell^1}}}_1 \\
	& \leq \frac{L}{M} \norm{-\rho u - \overbar{c}*u + v + \overbar{a}*w}_1 \\
	& \leq \frac{L}{M}(\rho + \norm{\overbar{c}}_1 + 1 + \norm{\overbar{a}}_1)
	\end{align*}
	
	\begin{align*}
	\norm{w_2}_1
	& = \norm{A_{(11)} \cdot u_1 + A_{(12)} \cdot v_1 + A_{(13)} \cdot w_1}_1\\
	& \leq \norm{\underbrace{A_{(11)} \cdot u_1}_{0_{\ell^1}}}_1 
	+ \norm{\underbrace{A_{(12)} \cdot v_1}_{0_{\ell^1}}}_1
	+ \norm{\underbrace{A_{(13)} \cdot w_1}_{L \eta (\overbar{b}*u - \overbar{a}*v + \beta w)^{\infty}}}_1 \\
	& \leq \frac{L}{M} \norm{\overbar{b}*u - \overbar{a}*v + \beta w}_1 \\
	& \leq \frac{L}{M}(\norm{\overbar{b}}_1 + \norm{\overbar{a}}_1 + \beta)
	\end{align*}
	where we have used the estimates given in Proposition \ref{prop:operatorbounds}.
	Since $(u,v,w) \in \mathcal{X}$ was an arbitrary unit vector we conclude from the definition of the operator norm on $\mathcal{X}$ that 
	\[
	\norm{A(A^{\dagger} - DF(\overbar{a},\overbar{b},\overbar{c}))}_{\mathcal{X}} \leq 
	\frac{L}{M} \max \left\{
	2 \sigma, \rho + \norm{\overbar{c}}_1+ 1 + \norm{\overbar{a}}_1, \norm{\overbar{b}}_1 + \norm{\overbar{a}}_1 + \beta 
	\right\} = Z_1.
	\]

\subsubsection*{$Z_2$ bound}
Finally,  define  
\[
Z_2 := 2L \max \left\{\norm{A^{MN}}_{\mathcal{X}}, \frac{1}{M} \right\},
\]  
and consider $(x,y,z) \in B_{r}(\overbar{a},\overbar{b},\overbar{c})$.  Take $(u,v,w)^T \in \mathcal{X}$ 
a unit vector as above. Using the definition of $DF$ we express $\norm{(DF(x,y,z) - (DF(\overbar{a},\overbar{b},\overbar{c}))	
	\cdot (u,v,w)^T}_{\mathcal{X}}$ explicitly as 
	\[
	\left|\left|
	\left(
	\begin{array}{cc}
	0 \\
	L \eta ((z-\overbar{c})*h) + L \eta((x-\overbar{a})*w) \\
	-L \eta((y-\overbar{b})*h) - L\eta((x-\overbar{a})*v)
	\end{array}
	\right)
	\right|\right|
	_{\mathcal{X}}
	\leq 
	L\left|\left|
	\left(
	\begin{array}{cc}
	0 \\
	\eta (z-\overbar{c}) + \eta(x-\overbar{a}) \\
	-\eta(y-\overbar{b}) - \eta(x-\overbar{a})
	\end{array}
	\right)
	\right|\right|
	_{\mathcal{X}}
	\leq 2Lr
	\]
	where we use the fact that $\norm{(x-\overbar{a})}_1,\norm{(y-\overbar{b})}_1,$ and $\norm{(z-\overbar{c})}_1$ are each less than $r$. Then $DF(\overbar{a},\overbar{b},\overbar{c})$ is locally Lipschitz on $B_{r}(\overbar{a},\overbar{b},\overbar{c})$ with Lipschitz constant $2L$. 
	Now suppose $h \in \ell^1$ is a unit vector so we have
	\[
	[A_{(ij)}\cdot h]_{m,\alpha} = 
	\left\{
	\begin{array}{cc}
	[A_{(ij)}^{MN} \cdot h^{MN}]_{m,\alpha} & (m,\alpha) \prec (M,N) \\
	\frac{h_{m,\alpha}}{m} & (m,\alpha) \succ (M,N), i=j \\
	0 & \text{otherwise}
	\end{array}.
	\right.
	\]
	We let $\delta_i^j$ denote the Dirac delta so that we have the estimate 
	\begin{align*}
	\norm{A_{(ij)}}_1 & = \sup\limits_{\norm{h} = 1} \norm{
		\sum_{m=0}^{M} \sum_{\alpha=0}^{N} [A^{MN}_{(ij)}h^{MN}]_{m,\alpha} 
		+ \sum_{m=M+1}^{\infty} \sum_{\alpha=N+1}^{\infty} \delta_i^j \frac{1}{m} h_{m,\alpha}
	}_1 \\
	& \leq \sup\limits_{\norm{h} = 1} \norm{
		A^{MN}_{(ij)}h^{MN} + \delta_i^j \frac{1}{M} h^{\infty}
	}_1 \\
	& \leq \norm{A_{(ij)}}_1 \norm{h^{MN}}_1 + \delta_i^j \frac{\norm{h^{\infty}}_1}{M} \\
	& \leq \max \left\{\norm{A^{MN}_{(ij)}}_1, \delta_i^j \frac{1}{M} \right\}
	\end{align*}
	where we have used the fact that $\norm{h^{MN}}_1 + \norm{h^{\infty}}_1 = 1$. 
	Therefore, we conclude that
	\[
	\norm{A}_{\mathcal{X}} = \max \left\{ 
	\norm{A^{MN}}_{\mathcal{X}}, \frac{1}{M} 
	\right\}
	\]
	Taking these bounds together, if $\norm{(x,y,z) - (\overbar{a},\overbar{b},\overbar{c})}_{\mathcal{X}} \leq r$, we have the estimate
	\begin{align*}
	\norm{A(DF(x,y,z) - DF(\overbar{a},\overbar{b},\overbar{c}))}_{\mathcal{X}} & \leq \norm{A}_{\mathcal{X}} \norm{DF(x,y,z) - DF(\overbar{a},\overbar{b},\overbar{c})}_{\mathcal{X}} \\
	& \leq 2L \max \left\{\norm{A^{MN}}_{\mathcal{X}}, \frac{1}{M} \right\} \norm{(x,y,z) - (\overbar{a},\overbar{b},\overbar{c})}_{\mathcal{X}} \\
	&  \le Z_2r.
	\end{align*}

With these operators and bounds defined and equipped with Proposition \ref{prop:newton}, the validation for the advected image of a particular $\gamma$ parameterizing an arc in $\rr^3$ amounts to using a computer to rigorously verify that each of these estimates holds using interval arithmetic. The error bounds obtained as described in Remark~\ref{rmk:radPoly} are the sharpest bounds for which the contraction mapping theorem holds using these operators and bounds. If any of these bounds can not be verified or if the radii polynomial is non-negative, we say the validation fails. The a-posteriori nature of the validation yields little information about the cause for the failure and one must look carefully at the numerics, operators, bounds, or all three. 

\subsubsection*{Single step performance}
It is important to recognize that high precision numerics are not sufficient to control propagation error,
and we must carefully ``tune'' the parameters for the integrator in order to pass from 
deliberate and precise numerics to useful rigorous error bounds. To illustrate the importance 
of these tuning parameters as well as some heuristic methods for optimizing their values, we fix a benchmark arc segment, $\gamma_B$, to be the line segment between the equilibria 
in the Lorenz system with coordinates  
\[
p^{\pm} = (\pm \sqrt{\beta(\rho - 1)},\pm \sqrt{\beta(\rho - 1)},\rho - 1).
\]
Specifically, for the classical parameters $(\rho,\sigma,\beta) = (28, 10, \frac{8}{3})$ 
we will take our benchmark arc segment as
\[
\gamma_B(s) = 
\left(
\begin{array}{c}
0 \\
0 \\
27
\end{array}
\right)
+ 
\left(
\begin{array}{c}
\sqrt{72} \\
\sqrt{72} \\
0
\end{array}
\right)s. \qquad s \in [-1,1].
\]
This segment provides a reasonable benchmark, as it captures 
some of the worst behaviors we expect a vector field to exhibit. 
In particular, this segment does not align well with the flow, has 
sections which flow at vastly different velocities, and is a long arc
relative to the spatial scale of the system. 
That is, the length of the arc is the same order of the width of the attractor.
The combination of these bad behaviors make it a reasonable benchmark for 
showcasing heuristic subdivision and rescaling methods as well as the 
parameter tuning necessary for controlling error bounds.

For a typical manifold of initial conditions advected by a nonlinear vector field, error propagation in time is unavoidable and grows exponentially. Two natural strategies emerge when attempting to maximize reliability for long time integration. The first is to attempt to minimize precision loss from one time step to the next. This makes sense since the error carried forward from one time step contributes directly to the error in the next time step as initial uncertainty. The source of this error is primarily due to truncation, so that high order expansion in the spatial variables reduce the truncation error in the flow expansion. In other words, we are motivated to take $N$ as large as possible to control one-step error propagation. On the other hand, each time step incurs error which is unrelated to truncation error or decay of Taylor coefficients. The source of this error is simply due to the validation procedure which incurs roundoff errors as well as errors due to variable dependency from interval arithmetic. Therefore, we are simultaneously motivated to take fewer time steps. Evidently, this leads to a strategy which aims to maximize $\tau$ for a single time step. Recalling our estimate for the decay of the time coefficients in $\TT{\Gamma}$ it follows that maximizing $\tau$ implies we take $M$ as large as possible. 

The difficulty in carrying out both strategies is twofold. The obvious problem is computational efficiency. An examination of computation required for the validation of a single time step immediately reveals two operations which dominate the computational cost: the cost of numerically inverting the finite part of $A^{\dagger}$ as required for the definition of $A$, and the cost to form the matrix product, $A^{MN} DF^{MN}$, as required for the $Z_0$ bound. Both computations scale as a function of $M |N|$ which leads to a natural computational limitation on the effectiveness of either strategy. In fact, if the computational effort is fixed, say $M|N| = K$, then these strategies must compete with one another. Determining how to balance these competing strategies to obtain an overall more reliable parameterization is highly nontrivial even when $f,\gamma$ are fixed. For our benchmark segment we set $K = 1,777$ so that the matrix $A^{MN}$ has size $9K \approx 16,000$. Figure \ref{fig:compefficiency} illustrates the inherent trade-offs when attempting to balance $M$ and $N$ when $K= M|N|$ is fixed. 
\begin{figure}[t!]
	\includegraphics[width=.45\linewidth]{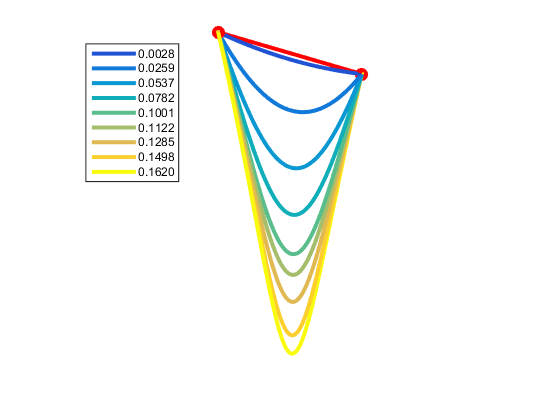}
	\includegraphics[width=.45\linewidth]{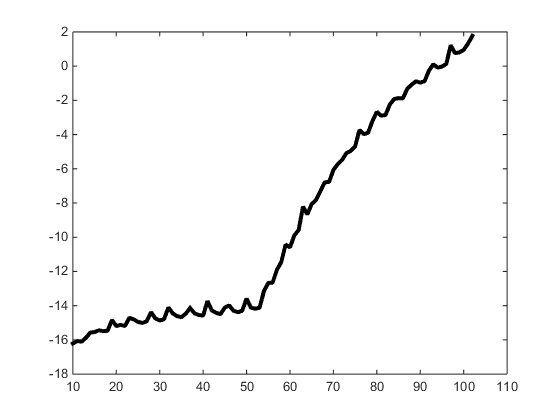}
	\caption{For fixed computational effort, $K = 1,777$: (left) The advected image of $\gamma_B$ (red line segment) for a single step with $M \in \{10,20,\dots,90\}$. The integration time resulting from our time rescaling is shown for each choice of $M$. (right) Single step error bounds (log10 scale) plotted against $M$. Initially, increasing $M$ has little effect on the error since $\gamma_B$ is of low order and $f$ is only quadratic. For $M > 55$, the increased precision loss due to truncation error becomes dramatic. For $M > 102$ the validation fails.} 
	\label{fig:compefficiency}
\end{figure}

A second difficulty arises in trying to balancing these two strategies which is more subtle. In this case we see that optimizing the choice for $M$ and $N$ typically depends heavily on both $f$ and $\gamma$. To illustrate how this occurs suppose $N \in \nn^{d-1}$ is a fixed spatial truncation and let $\epsilon = {M}/{|N|}$ and suppose $\overbar{a}$ is an approximation for $\TT{\Gamma}$ for a single time step. Recall from \eqref{eq:defF} that the coefficients of the form $[F(\overbar{a})]_{m,\alpha}$ are determined recursively from coefficients of the form $[\overbar{a}]_{j,\alpha}$ for $j \leq m-1$. Specifically, they are obtained by taking products in $\mathcal{X}$ which correspond to Cauchy products of Taylor series. We also recall that 
$\norm{F(\overbar{a})}_{\mathcal{X}}$ captures the truncation error and directly impacts the rigorous error bound as seen in the definition in 
Equation~\eqref{sec:Lorenz_Y0}. Evidently, if $\TT{\gamma}$ has nontrivial coefficients of higher order, these Cauchy products will produce nontrivial coefficients of even higher order. This effect occurs for each $1 \leq m \leq M$ with the Cauchy products constantly ``pushing'' weights into the higher order terms. This phenomenon in sequence space is a consequence of the geometric significance of these Taylor series. Typical polynomial parameterizations are rapidly deformed under advection and this stretching and compressing leads to analytic functions with nontrivial higher order derivatives. For ``large'' $\epsilon$, these nontrivial coefficients begin to contribute to the truncation error which is noticed in the $Y_0$ bounds. Moreover, the severity of this effect is determined by the order of the first ``large'' term in $\TT{\gamma}$, the parameter $\epsilon$, and the degree of the nonlinearity in $f$. 

As before, we considered our benchmark segment $\gamma_B$ and $\epsilon \in [.5,2]$. We further note that the Lorenz system is only quadratic and that $\gamma_B$ has no nonzero coefficients for $\alpha > 1$. Thus, the behavior indicated in Figure \ref{fig:epsilon} is driven exclusively by the tuning of $\epsilon$. 
\begin{figure}[h!]
	\includegraphics[width=1\linewidth]{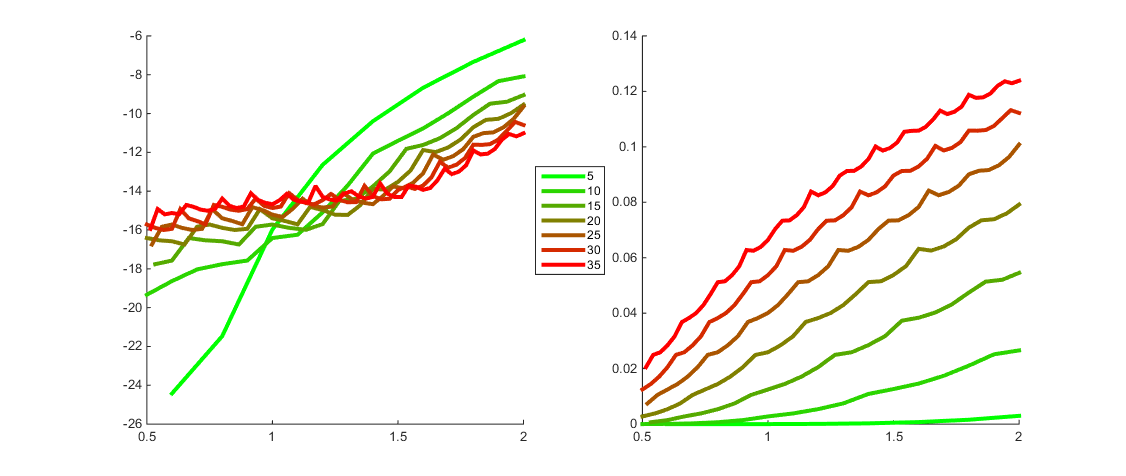}
	\caption{For fixed $N \in \{5,10,...,35\}$: (left) Single step error bounds (log10 scale) plotted against $\epsilon$. As $\epsilon$ increases, the loss in precision due to truncation error begins increasing dramatically which motivates taking $\epsilon$ small. (right) Single step integration time is plotted against $\epsilon$. Taking $\epsilon$ larger results in longer timesteps and thus requires fewer validations.}
	 \label{fig:epsilon}
\end{figure}
The severity of this effect for such mild choices of $f$ and $\gamma$ indicate that the long-time fidelity of our globalized manifold favors taking $\epsilon$ small. To say it another way, the precision loss in a single time step due to the validation is typically dominated by the truncation error. For our integrator, we conclude that tuning $\epsilon$ carefully is essential to controlling errors especially over long time intervals. 

\subsection{Long time advection and domain decomposition}
Regardless of our tuning of the integrator, the Taylor coefficients give a strict upper bound on the interval of time for which our expansion is valid. In this section, we describe our method for globalizing the local manifold by long time advection of boundary chart maps. First, we describe the decomposition of the time domain into subintervals. On each subinterval, we apply the single step algorithm to obtain its image under the flow valid on that subinterval, and this procedure is iterated to obtain the image of the local boundary for a longer interval in time. Next, we describe the necessity for spatial domain decomposition in between time steps for the partially advected arcs. This is a direct result of the nonlinear deformation experienced by a typical arc and we describe a rigorous decomposition algorithm.

\subsubsection*{Multiple time steps} \label{sec:time stepping}
No matter what choice is made for $(M,N)$, a single integration time step may not be sufficient for a practical application. For example, if one wants to propagate the local manifold for an interval of time which exceeds the radius of convergence (in time) of the Taylor expansion for some subset of points in $\gamma$. This necessarily requires one to extend the solution in time by analytic continuation. While this can be overcome to some degree, (e.g.\ by choosing a Chebyshev basis in the time direction) it can't be completely eliminated, especially if one is interested in growing the largest possible manifold. In this section we describe the method by which additional time steps can be computed with rigorous error estimates propagated from one time step to the next. As before, assume that $\gamma = \gamma_0$ is an analytic chart of the boundary of the local manifold where the zero subscript denotes the number of time steps of integration performed. Recall that we have $\gamma_0$ in the form of a polynomial of degree $N$ denoted by $\overbar{\gamma}_0$, and a rigorous analytic error estimate, $r_0$ such that the following inequality holds
\[
\norm{\TT{\overbar{\gamma}_0} - \TT{\gamma}}_{\mathcal{X}} < r_0.
\]
Our one-step integration algorithm takes input $(\overbar{\gamma}_0,r_0)$ and produces output of the form $(\overbar{\Gamma}_1,\tau_1,r_1)$ such that
\[
\norm{\TT{\overbar{\Gamma}_1} - \TT{\Phi(\gamma_0(s),t)}}_{\mathcal{X}} < r_1.
\]
holds for all $(s,t) \in [-1,1]\times[t_0,t_0 + \tau_1]$. Now, we define $\gamma_1(s) = \Gamma(s,\tau_1)$ which can be regarded as a polynomial plus analytic error bound of the form
\[
\gamma_1(s) = \overbar{\Gamma}(s,\tau_1) + r_1 = \overbar{\gamma_1}(s) + r_1.
\]
That is, $\gamma_1(s)$ is the evolved image of $\gamma_0$ under the time-$\tau_1$ flow map. Moreover, $\gamma_1 = \overbar{\gamma_1} + r_1$ has the appropriate form our one-step integrator and propagation to the next time step results by integrating $\gamma_1$. In other words, time stepping is performed by ``collapsing'' the $d$-dimensional output from one time step to the $(d-1)$-dimensional image of the time-$\tau_1$ map, and passing this as the input to the next time step. It follows that the advected image of $\gamma$ on an interval $[t_0,t_0 + T]$ can be obtained as a triplet of sequences: $\{r_0,\dots,r_l\}, \{\overbar{\gamma}_0,\dots,\overbar{\gamma}_l\}, \{t_0,\dots,\tau_{l-1},T\}$ for which at each step we have a rigorous estimate of the form
\[
\label{eq:piecewise bounds}
\norm{\TT{\overbar{\gamma}_i} - \TT{\Phi(\overbar{\gamma}_{i-1}(s),\tau_i)}}_{\mathcal{X}} < r_i \qquad 1 \leq i \leq l
\]
where $\TT{\overbar{\gamma}_i}$ is an approximation of the sequence of Taylor coefficients for $\Phi(\gamma(s),t)$ centered at $\tau_i$ with the expansion valid on the interval $[\tau_i-\tau_{i+1}, \tau_i+\tau_{i+1}]$. Evidently, each of the rigorous bounds in \eqref{eq:piecewise bounds} immediately implies the corresponding bound on the $C^0$ norm. Thus, we can define the piecewise polynomial 
\[
\overbar{\Gamma}(s,t) = \overbar{\gamma}_i(s,t-\tau_i) \quad \text{for} \ t \in [\tau_i,\tau_{i+1}]
\] 
and since the sequence of error bounds is nondecreasing (i.e.\ $r_{i+1} \geq r_i$) we have \\
$\norm{\overbar{\Gamma}(s,t) - \Phi(\gamma(s),t)}_{\infty} < r_l$ for all $(s,t) \in [-1,1]\times[t_0,t_0+T]$. 

\subsubsection*{Spatial domain decomposition}
\label{sec:remesh}
No matter how carefully one tunes the above parameters, surfaces will generically undergo deformation at exponential rates. Thus, despite any efforts at controlling the error propagation in a single time step, at some point the initial surface for a given time step will be excessively large. Attempting to continue integrating it results in a rapid loss of precision and a marked loss in integration time per step. Thus, typical manifolds of initial conditions can be advected for a short time, before requiring subdivision into smaller sub-manifolds. Performing this subdivision {\em rigorously} presents several challenges which must be addressed. We refer to this problem as the {domain decomposition} problem and we remark that a complete discussion is beyond the scope of this current work. However, our goal in this section is to describe pragmatic methods for efficiently estimating nearly optimal domain decompositions. In particular, we are interested in partially solving this problem by developing methods to answer three questions related to the general problem. When should a manifold be subdivided? How can this subdivision be done to maintain a mathematically rigorous parameterization of the global manifold? Finally, where are the best places to ``cut'' the manifold apart? In the remainder of this section, we address each of the questions. 

\subsubsection*{When to subdivide}
The first consideration is determining when an arc should be subdivided. Evidently, we are interested in subdividing any time our surface undergoes large scale deformation. However, this criterion is difficult to evaluate by evaluating surface area alone. The reason for this is that a surface can simultaneously have relatively small surface area and large higher order Taylor coefficients caused by cancellation. However, these large Taylor coefficients result in excessive error given our norm on $\mathcal{X}$. On the other hand, each time a surface is subdivided the computational effort required to propagate it increases exponentially. Thus, subdiving too conservatively will result in excessive computation times and the decision to subdivide is typically motivated by a particular error threshold required. Our solution which attempts to optimize this trade-off illustrates a powerful feature inherent in utilizing the radii polynomial method for our validation. Namely, by applying the Newton-Kantorovich theorem ``in reverse'', we are assured that if $\epsilon$ is chosen conservatively and the numerical approximation is close, that our error bound from the validation will be tight.

With this in mind it is natural to assign an acceptable precision loss for a given time step. This is normally done by prescribing a desired error bound on the final image and requiring each time step interval to incur loss of precision no greater than the average. A surface which exceeds this threshold is identified as defective and may be dealt with either by subdivision, decreasing $\epsilon$. or both. Given the difficulty in choosing $\epsilon$ for arbitrary surfaces and the necessity of subdivision eventually for any choice of truncation, we have chosen to always perform subdivision. In other words, we determine when a surface should be subdivided by performing the validation. If we don't like the error bound obtained, we subdivide the initial surface and integrate it again. Ultimately, the cost in utilizing this method is a single integration step for each subdivision which can be regarded as inexpensive next to the cost of subdividing a surface too early and performing exponentially more integrations over the course of the globalization procedure.

\subsubsection*{How to subdivide}
Next, we describe how a surface can be subdivided once one has determined the need to do so. Specifically, we are interested in {\em rigorous} subdivision of analytic surfaces which will require some care. We will describe our method in the context of the Lorenz example (i.e.\ $d = 2$) and note that the extension to higher dimensional surfaces is straightforward. Thus, we suppose $\gamma(s)$ is an analytic arc segment converging for $s \in [-1,1]$ and $\Gamma(s,t)$ its evolution under the flow with coefficient sequence $\mathcal{T}(\gamma) = a \in \mathcal{X}$. Subdivision of this arc amounts to choosing a subinterval, $[s_1,s_2] \subseteq [-1,1]$ and defining an appropriate transform $T: \mathcal{X} \rightarrow \mathcal{X}$ such that 
\[
\itt{T(a)} \left|_{[-1,1]} \right. = \itt{a} \left|_{[s_1,s_2]} \right..
\]
Moreover, when the rescaling is chosen to be linear we have $T \in \mathcal{L}(\mathcal{X})$. To make our rigorous rescaling precise, we define $s_{*} = \frac{s_1+s_2}{2}$ and $\delta = \frac{s_2-s_1}{2}$ so that computing the coefficients for $\Gamma$ recentered at $s_*$ and rescaled by $\delta$ is given by direct computation
\begin{align*}
\Gamma(\delta s,t)  = & \sum_{\alpha=0}^{\infty} a_\alpha(t)(\delta s)^\alpha \\
= & \sum_{\alpha=0}^{\infty} \delta^\alpha a_\alpha(t)(s - s_{*} + s_{*})^\alpha \\
= & \sum_{\alpha=0}^{\infty} \delta ^\alpha a_\alpha(t) \sum_{\kappa=0}^{\alpha} \binom{\alpha}{\kappa}s_{*}^{\alpha-\kappa} (s - s_{*})^\kappa \\
= & \sum_{\kappa=0}^{\infty} \sum_{\alpha=\kappa}^{\infty} \delta ^\alpha a_\alpha(t) \binom{\alpha}{\kappa}s_{*}^{\alpha-\kappa} (s - s_{*})^\kappa \\
= & \sum_{\kappa=0}^{\infty} c_\kappa(t)(s - s_{*})^\kappa 
\end{align*}
where each $a_\alpha(t)$ is an analytic scalar function of time and $c_\kappa(t) = \sum_{\alpha=\kappa}^{\infty} \delta^\alpha a_\alpha(t) \binom{\alpha}{\kappa}s_{*}^{\alpha-\kappa}$. 
Evidently, if $a \in \mathcal{X}$ and if $|\delta + s_*| \leq 1$, then $T(a) \in \mathcal{X}$. It follows that the coefficients for $T(a)$ are given explicitly by 
\[
[T(a)]_\alpha = \sum_{\kappa=\alpha}^{\infty} \delta^\kappa a_\kappa(t) \binom{\kappa}{\alpha}s_{*}^{\kappa-\alpha}
\]
and in particular, we note that $T$ is a linear operator on $\mathcal{X}$.

Now, we note that application of $T$ must be performed rigorously to preserve the error bounds on the global manifold. This requires controlling the error propagation induced by applying $T$ on the tail of $a$ as well as numerical precision loss from applying $T$ on the finite approximation. The key to controlling this error begins with the following estimate on the decay of $T(a^{(k)})$ for $1 \leq k \leq n$. For notational convenience, suppose for the moment that $a \in \ell^1_2$ is arbitrary, then we have:
\begin{align*}
\norm{T(a)}_1 = & \sum_{\alpha = 0}^{\infty} \left| c_\alpha \right| \\ 
= & \sum_{\alpha = 0}^{\infty} \sum_{\kappa = \alpha}^{\infty} \delta^\kappa \left| a_\kappa(t) \right| \binom{\kappa}{\alpha} s_*^{\kappa - \alpha}\\
= & \sum_{\kappa = 0}^{\infty} \sum_{\alpha = 0}^{\kappa} \delta^\kappa  \left| a_\kappa(t) \right| \binom{\kappa}{\alpha} s_*^{\kappa - \alpha}\\
= & \sum_{\kappa = 0}^{\infty}  \left| a_\kappa(t) \right| \sum_{\alpha = 0}^{\kappa} \delta^\kappa \binom{\kappa}{\alpha} s_*^{\kappa - \alpha}\\
= & \sum_{\kappa\kappa = 0}^{\infty}  \left| a_\kappa(t) \right| (\delta + s_*)^\kappa.\\
\end{align*}
and we recall that $\left| \delta + s_* \right| \leq 1$. Therefore $\norm{T(a)}_1 \leq \norm{a}_1$ always holds implying that the analytic error bound for the current step in time is automatically a bound for each submanifold. In fact, if $-1 < s_1 < s_2 < 1$, then we have the strict inequality $\left|\delta + s_*\right| < 1$ implying that our error estimate for the reparameterized manifold may {\em decrease} after subdivision. Actually, this is not surprising as our surfaces are analytic by assumption and thus the maximal principle applies.  

With these bounds on the tail error, we now consider the numerical precision loss occuring when $T$ is applied to the finite approximation given by 
\[
\overbar{a} \mapsto T({\overbar{a}}).
\]
This error is controlled similarly to other sources of floating point approximation error (i.e.\ by using interval arithmetic for each computation). Once interval enclosures of $T(\overbar{a})$ are obtained, we can pass once again to a floating point approximation for $T(\overbar{a})$ by applying the analytic shrink wrapping described in Section \ref{sec:intro} to each coefficient. 

\begin{figure}[h!]
	\includegraphics[width=.49\linewidth]{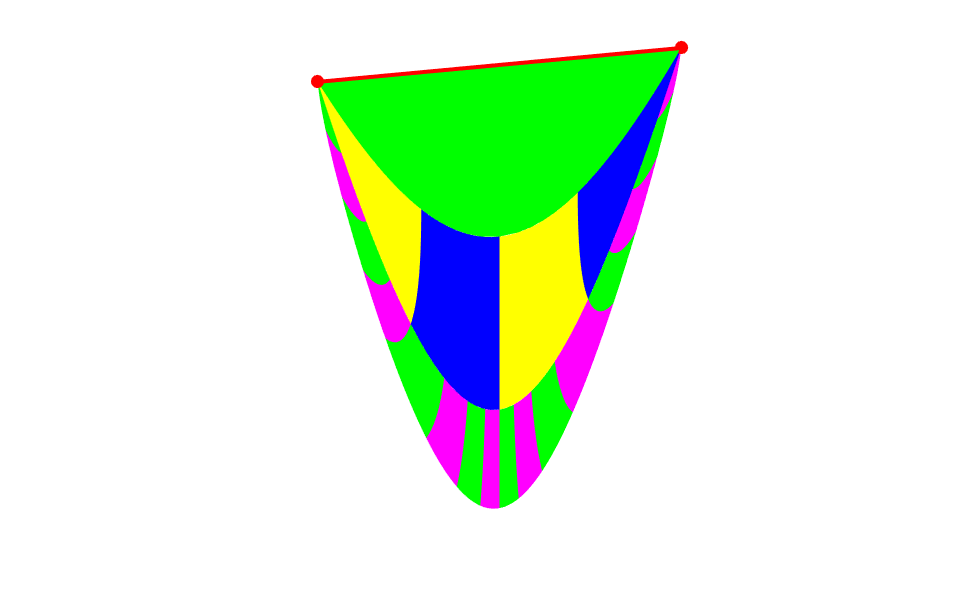}
	\includegraphics[width=.49\linewidth]{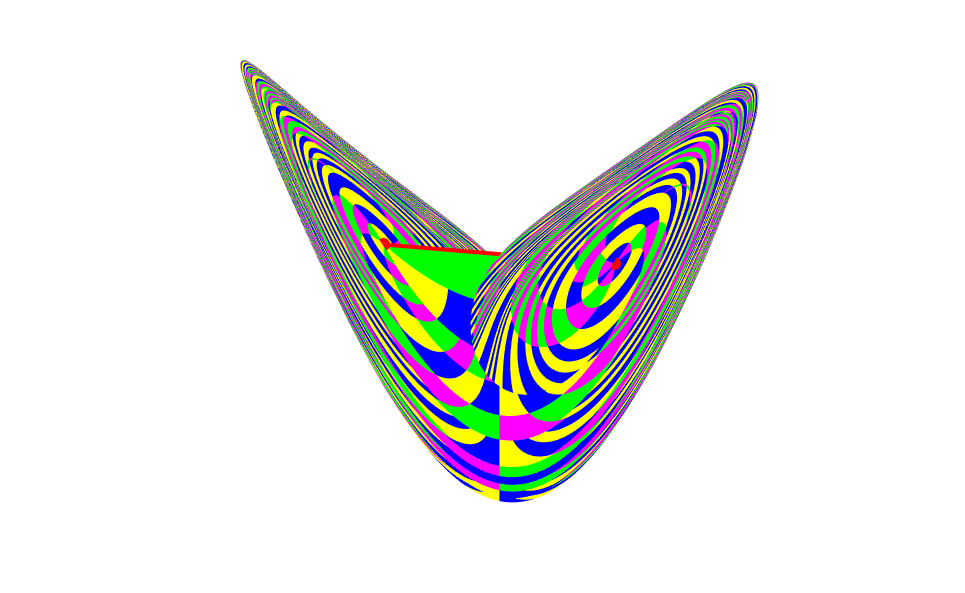}
	\caption{Integration of $\gamma_B$ (red segment) for fixed degree $(M,N) = (39,24)$. Each distinct color is the image of a single chart map. (left) 3 steps of integration in time with 4 subdivisions in space performed in between each. The total integration time is $\tau = .25$ units with error bound $1.1640 \times 10^{-13}$. (right) The forward image of the benchmark segment using our the automatic space/time subdivision schemes. After $\tau = 1$ units the surface is parameterized by 941 charts with a rigorous error bound of $6.6969 \times 10^{-13}$.}
	\label{fig:subdivscheme}
\end{figure}

\subsubsection*{Where to subdivide}
Our last consideration is to determine how to decompose the domain in such a way that each sub-manifold has somewhat similar error propagation. We note that it is too much to ask that the error propagation on each submanifold is identical. A more feasible goal is to describe a domain decomposition algorithm which is efficiently computable and which outperforms the naive decomposition. By naive decomposition, we are referring to subdividing using a uniformly spaced grid along each spatial dimension which tends to perform poorly based on experimentation. For comparison we note that such a decomposition amounts to uniform subdivision with respect to the box metric on $\rr^d$. Our domain decomposition scheme aims to define an alternative metric on $\rr^d$ which performs better. Intuitively, our goal is to choose a metric which defines initial conditions in a surface to be ``close'' if their trajectories do not separate under advection for some fixed time interval. Conversely, we want to define points on the manifold as far apart when their trajectories rapidly diverge from one another. 

To make this more precise let $x = \gamma(s)$ be a given surface and we begin by fixing a point, $x_0 = \gamma(s_0)$, and a time interval, $[t_0,t_0+T]$. For any $t \in [t_0,t_0+T]$ recall that $\Gamma(s,t)$ is a smoothly embedded $(d-1)$-manifold in $\rr^n$ and thus its pullback induces a well defined Riemannian metric. We will let $\rho_{t}(x_0,x)$ denote the geodesic distance between $x_0$ and $x$ along $\Gamma(s,t)$ with respect to this Riemannian metric. More generally, allowing $t$ to vary we note that $\Gamma(s,t)$ is a smoothly embedded $d$-manifold and we let $\rho(x_0,x)$ denote the associated geodesic distance. Now, we are interested in measuring the extent that the manifold is stretching locally (with respect to $\rho$) near $x_0$ on the time interval $[t_0,t_0+T]$. Specifically, let $B_R = \{s \in [-1,1]: \ \rho(\gamma(s),x_0) < R \}$ denote the open ball of radius $R$ centered at $x_0$ in the $\rho$-topology and define
\[
h_T(R,s_0) = \sup_{s \in B_R} \int_{t_0}^{t_0+T} \rho_t(x_0,\Gamma(s,t)) \ dt.
\] 
Our interest will be in evaluating a ``Lyapunov-exponent-like'' scalar defined by $\sigma(s_0) = \lim\limits_{R \rightarrow 0} h_T(R,s_0)$. Intuitively, $\sigma(s)$ measures the fitness of the local manifold with respect to integration. That is, relatively large values of $\sigma$ indicate the regions of the manifold which are expected to undergo the most rapid deformation over the time interval $[t_0,t_0+T]$. Based on this observation, our strategy for subdivision is to subdivide the manifold uniformly with respect to $\sigma$. 

It is important to note that we have no hope of evaluating $\sigma$ explicitly, however, in this instance a rigorous computation is not required. Rather, we need only to approximate uniform spacing in an efficient manner and we describe a fast numerical algorithm for this below. As with the previous sections, we will restrict to the case where $d=2$ and note that the the extension to higher dimensions is straight forward. 
\begin{enumerate}
\item Initiate grids of spatial and temporal sample points given by $\{s_0,s_1,\dots,s_I\},\{t_0,t_1,\dots,t_J\}$ respectively where $s_0 = -1,s_I = 1,$ and $t_J = t_0+T$. These grids may be uniform and many times more dense than the number of subdivisions. 
\item Use a numerical integrator (e.g.\ Runga Kutta) to compute $x_i(t_j)$ where $x_i(t_0) = \gamma(s_i)$. 
\item For each $0 \leq i \leq I$, approximate $h_T(R,s_i)$ by central differences in space:
\[
h_T(R,s_i) \approx \frac{1}{2} \int_{t_0}^{t_0+T} \rho_t(x_i(t),x_{i+1}(t)) + \rho_t(x_i(t),x_{i-1}(t)) \ dt.
\]
\item We next approximate the geodesic distance by the Euclidean distance:
\[
\rho_t(x_i(t),x_{i+1}(t)) \approx |x_i(t) - x_{i+1}(t)|.
\]
This yields a precise estimate for the geodesic distance when the spatial grid is very dense. This is not a limitation however given the efficiency of modern numerical integrators and the fact that we perform this on relatively small intervals in time. 
\item Finally, we are led to approximate these integrals by some quadrature method. As with the spatial grid, it is computational feasible to take a dense temporal grid so that the quadrature used makes little impact. The final result is an approximation of the form
\[
\sigma(s_i) \approx \frac{1}{2} \sum_{j=0}^{J} w_j(|x_i(t_j), - x_{i+1}(t_j)| + |x_i(t_j), - x_{i-1}(t_j)|)
\]
where $w_j$ is the quadrature weight. 
Extracting a uniform subgrid in space is performed on the approximated values of $\sigma$ on the dense spatial grid to any desired number of subdivisions. 
\end{enumerate}
This spatial subdivision scheme combined with our automatic time rescaling yields a reliable method for balancing efficient computation while minimizing error propagation. The results for our benchmark segment are given in Figure~\ref{fig:subdivscheme}.

\section{Results for Lorenz system}
\label{sec:Lorenzresults}
In this section we present additional details about the results obtained for the Lorenz system. Our main example has been the stable manifold at the origin for which we have globalized the manifold using two distinct strategies. In both cases we begin with a local parameterization of the stable manifold, $P$, of order $(M,N) = (100,100)$. The initial error for this local parameterization is rigorously bounded by $8.9743 \times 10^{-14}$. Next, a piecewise parameterization for the boundary of the local manifold is initially obtained by parameterizing 8 line segments with endpoints $s_1,s_2 \in \{-1,0,1\}$ and lifting each segment through $P$. Next, we advect these boundary arcs in (backward) time to grow the local manifold. The result in Figure~\ref{fig:faststable} shows the resulting manifold after $\tau = -.3$ time units. The initial local manifold is the dark blue patch. The error propagation for the globalized manifold and the number of charts for the parameterization is given in Table \ref{table:stabletable}.

\begin{figure}[!htb]
	\includegraphics[width=1.075\linewidth]{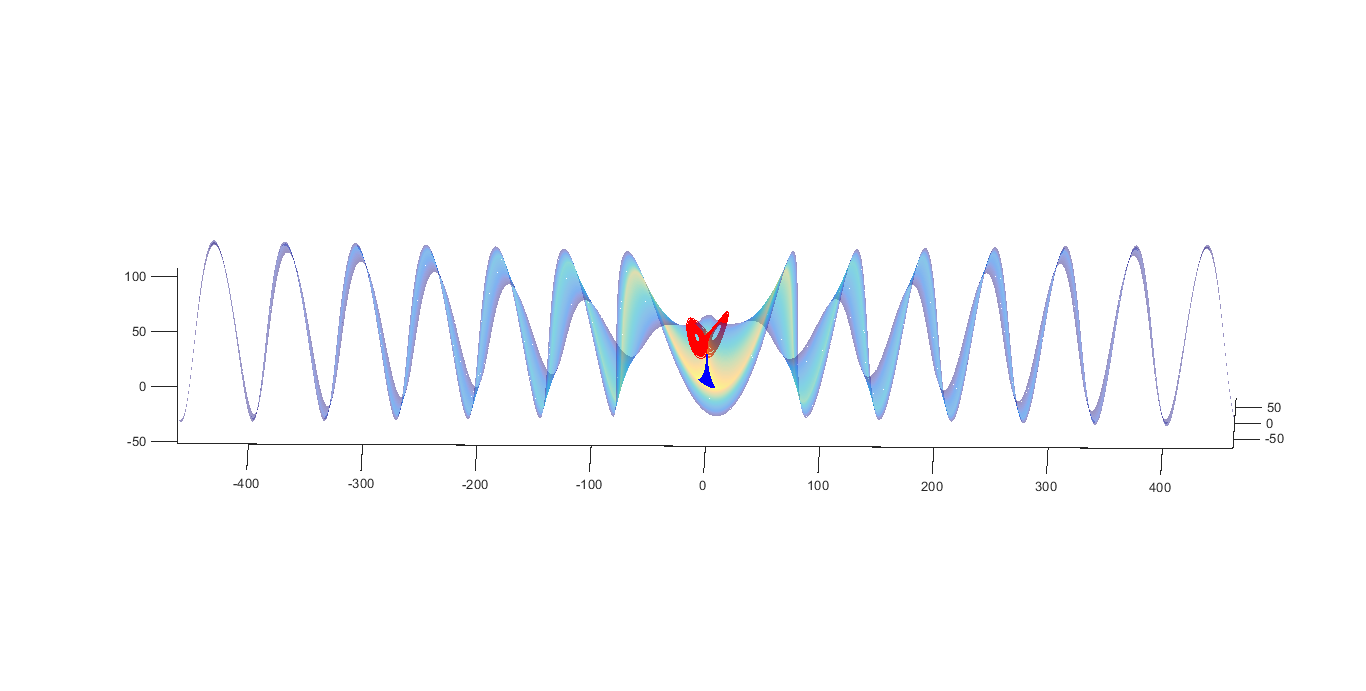}
	\caption{A validated two dimensional local stable 
		manifold of the origin in the Lorenz system:
		The initial local chart $P$ is obtained using the parameterization method, 
		as discussed in Section \ref{sec:parameterization}, and describes the 
		manifold in a neighborhood of the origin. The local stable manifold
		is the dark blue patch in the middle of the picture, below the attractor. A reference orbit near the attractor is shown in red for context. The boundary of the image of $P$ is meshed into arc segments and the global manifold is computed by advecting arcs by the flow using the rigorous integrator discussed in Section \ref{sec:rigorousIntegration}.} 
	\label{fig:faststable}
\end{figure}

\begin{table}[!htb]
\begin{center}
\begin{tabular}{|c|c|c|}
$\tau$ & Error Bound & Chart Maps \\
\hline 
0 	 & $8.9743 \times 10^{-14}$ & 8 \\
-0.1 & $3.2634 \times 10^{-13}$ & 88 \\ 
-0.2 & $1.7192 \times 10^{-7}$ & 396 \\ 
-0.3 & $2.9883 \times 10^{-7}$ & 746 \\ 
-0.4 & $2.9883 \times 10^{-7}$ & 1056 \\ 
-0.5 & $2.9883 \times 10^{-7}$ & 1374 \\ 
-0.6 & $1.0255 \times 10^{-6}$ & 1628 \\ 
-0.7 & $2.8063 \times 10^{-6}$ & 1906 \\ 
-0.8 & $7.7323 \times 10^{-6}$ & 2715 \\ 
-0.9 & $2.6827 \times 10^{-5}$ & 3615 \\ 
-1 & $1.0754 \times 10^{-4}$ & 4674 \\ 
\end{tabular}
\hspace{1cm}	
\begin{tabular}{|c|c|c|}
$\tau$ & Error Bound & Chart Maps \\
\hline 
0 	 & $8.9743 \times 10^{-14}$ & 8 \\
-0.03 & $1.1070 \times 10^{-13}$ & 12 \\ 
-0.06 & $1.3967 \times 10^{-13}$ & 38 \\ 
-0.09 & $2.2279 \times 10^{-13}$ & 88 \\ 
-0.12 & $3.0669 \times 10^{-13}$ & 164 \\ 
-0.15 & $5.0042 \times 10^{-13}$ & 276 \\ 
-0.18 & $7.7479 \times 10^{-13}$ & 446 \\ 
-0.21 & $1.3801 \times 10^{-12}$ & 702 \\ 
-0.24 & $2.9119 \times 10^{-12}$ & 1106 \\ 
-0.27 & $6.8347 \times 10^{-12}$ & 2006 \\ 
-0.3 & $1.6490 \times 10^{-11}$ & 5032 \\ 
\end{tabular}
\caption{Error propagation for the stable manifold. In both examples, the initial manifold boundary is parameterized by 8 arcs. (left) The slow (clipped) manifold shown in Figure~\ref{fig:slowstable}. (right) The unclipped manifold shown in Figure~\ref{fig:faststable}.}
\label{table:stabletable}
\end{center}
\end{table}

One immediately notices that the manifold rapidly expands away from the origin. This is unsurprising as the stable eignenvalues for the linearization at the origin are $\lambda_s \approx -2.67$ and $\lambda_{ss} \approx -22.83$. Our strategy to use the flow to globalize the local manifold naturally gives preference to the ``fast'' part of the local manifold. Therefore, we have globalized the local manifold by another strategy where we fix a compact subset, $K \subset \rr^3$, and grow the manifold in all directions until it exits $K$. For example, setting $K = [-100,100] \times [-100,100] \times [-40,120]$ we obtain the picture in Figure \ref{fig:slowstable} which we refer to as the {\em slow} manifold. As the fast manifold exits $K$, it is clipped using the domain decomposition algorithm and we continue advecting the remaining portion. The resulting error propagation is also shown in Table \ref{table:stabletable}. We note that once the slow manifold begins feeling the nonlinear dynamics, the manifold complexity becomes readily apparent. This complexity is seen in the repeated folding and shearing of the manifold shown in Figure \ref{fig:slowstable}. We also note that the appearance of additional connected components of the manifold is caused by this clipping procedure as portions of the manifold may exit $K$ only to return a short time later. From the numerical point of view, this complexity is also noticed as the total error propagation for the slow manifold is several orders of magnitude larger (per unit surface area).

\begin{figure}
\begin{floatrow}
\ffigbox{%
\includegraphics[width = \linewidth]{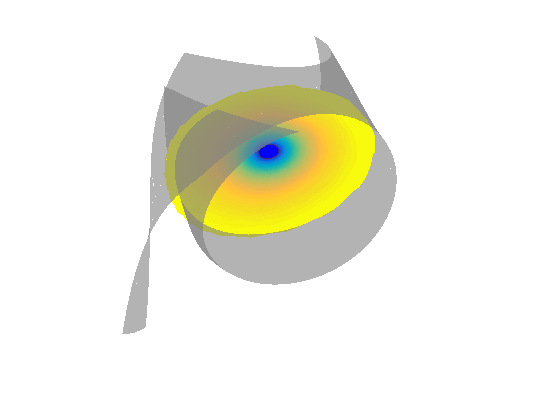}
}{%
\caption{The two-dimensional local unstable manifold at $p^+$ (central blue region) is rigorously computed and then extended by advecting its boundary by the flow (yellow) until it intersects the two-dimensional stable manifold of the origin (gray).}
\label{fig:connection}
}
\capbtabbox{%
\begin{tabular}{|c|c|c|}
$\tau$ & Error Bound & Chart Maps \\
\hline 
0 	 & $2.5271 \times 10^{-14}$ & 20 \\
2 & $5.5446 \times 10^{-13}$ & 504 \\ 
4 & $5.5471 \times 10^{-12}$ & 1067 \\ 
6 & $4.5824 \times 10^{-11}$ & 1655 \\ 
8 & $4.9037 \times 10^{-10}$ & 2267 \\ 
10 & $4.6123 \times 10^{-9}$ & 2922 \\ 
12 & $4.5241 \times 10^{-8}$ & 3602 \\ 
14 & $5.0631 \times 10^{-7}$ & 4326 \\ 
16 & $6.2147 \times 10^{-6}$ & 5124 \\ 
18 & $8.6529 \times 10^{-5}$ & 5988 \\ 
20 & $7.4806 \times 10^{-4}$ & 6820 \\ 
\end{tabular}
		}{%
\caption{Error propagation for the unstable manifold at $p^{+}$. \label{table:unstableerror}}
		}

\end{floatrow}
\end{figure}

Finally, we present a similar example for the two-dimensional unstable manifold at $p^{+}$. The local manifold is parameterized by a chart, $P$, with $(M,N) = (100,100)$ as before. The central blue patch in Figure \ref{fig:connection} shows $P(\mathbb{D}^2)$. Now, we must choose a piecewise parameterization of the boundary (in $\mathbb{D}^2$) and lift it through $P$ to obtain the boundary in $\rr^3$ for $W_{\mbox{\tiny loc}}^u(p^+)$. In this case the unstable eigenvalues are complex conjugates, $\lambda_u^{1,2} \approx .0940 \pm 10.1945i$, and we recall that the real unstable manifold is given by $P(z,\overbar{z})$ where $z \in \partial \mathbb{D}^1$. A natural choice for parameterizing $\partial \mathbb{D}^1$ is to use a complex exponential. However, to maintain control over truncation error when compositing with $P$ it is advantageous to choose polynomial parameterizations. Thus, we will instead lift the boundary a 20-gon inscribed in $\mathbb{D}^1$. Specifically, we choose $21$ nodes uniformly on $\partial \mathbb{D}^1$ of the form $s_j = (\cos \frac{\pi j}{10}, \sin \frac{\pi j}{10})$ for $j \in \{0,\dots,20\}$, and define
\[
\gamma_j(s) = \frac{(1-s)s_j + (1+s)s_{j+1}}{2} \qquad \text{for } s\in [-1,1] \qquad j \in \{0,\dots,19\}. 
\]
Then for each $j$, $P(\gamma_j(s), \overbar{\gamma_j(s)})$ lies in $W_{\mbox{\tiny loc}}^u(p^+)$. Moreover, $P$ conjugates the dynamics on $W_{\mbox{\tiny loc}}^u(p^+)$ with the linear dynamics and each $\gamma_j$ is transverse to the linear flow. Thus, we are assured that each line segment lifts to an arc which is transverse to the flow as required. 

Now, globalizing the unstable manifold follows in the same manner as for the stable manifold. For the picture shown in Figure \ref{fig:connection} the unstable manifold is the yellow region which was obtained by integrating for $\tau = 20$ time units. The propagation error and number of charts for this computation is given in Table \ref{table:unstableerror}. Combined with the globalized stable manifold at the origin, these computations are sufficient to detect an intersection with $W^s(p^0)$ as shown in Figure \ref{fig:connection}. This intersection is a connecting orbit from $p^+$ to $p^0$ and a method for validating its existence is presented in \cite{MR3207723}. Of course, methods for proving existence of connections which do not require computation of the global manifolds have been successfully taken up. However, the novelty in our example is the addition of a rigorous guarantee that the connect found is the {\em shortest} such connection. Indeed, analytic continuation of these manifolds can detect and prove existence of connections between these manifolds, or conversely, can be implemented to rule out the possibility of such connections.

\begin{remark}[Low order versus high order parameterization of local stable/unstable manifolds]
It is interesting to consider for a moment the role of the parameterization
method in the calculations just discussed.  In particular, what is the virtue of a
high order parameterization of the stable/unstable manifold? 

Recall that mathematically rigorous methods for obtaining computer assisted truncation error 
estimates for the linear approximation of the local stable/unstable manifold are 
developed in \cite{mirelesCNSNS_2014}, and in particular the 
Lorenz system is considered in Section $5$ of the work just cited.
There it was shown that the linear approximation of the 
stable manifold restricted to a neighborhood of radius $3.16 \times 10^{-8}$ 
about the origin enjoys an approximation errors less than $\delta = 5.51 \times 10^{-14}$.
This is comparable to (a bit smaller than) the error associated with the high order parameterizations used in 
the calculations above.

Now recall that
the parameterization method provides the conjugacy between the linear 
and the nonlinear dynamics on the manifold.  Consider the slow stable direction 
with $\lambda_s \approx -2.66$.
Beginning with an initial condition $s_0 = 3.16 \times 10^{-8}$
on the slow stable eigenspace (so that the linear approximation has error 
bound as in the last paragraph), we integrate the 
linear system for $\tau = -7.5$ seconds to obtain a final condition 
with 
\[
s_f \approx s_0 e^{\lambda_s  \tau} = 14.58.
\] 
This is roughly the size of the initial manifold patch (in the slow direction) used
in the computations above.
So, beginning with the linear approximation and a starting error of roughly $10^{-14}$
it is necessary integrate more than seven and an half time units to obtain a 
representation of the stable manifold as good as the one we started with in 
Figure \ref{fig:slowstable}.

Even assuming that the errors accumulate more slowly when we integrate near the 
equilibrium point, and that fewer subdivisions are needed, 
it should nevertheless be the case that, after integrating the boundary of the linear 
approximation for seven time units, 
perhaps much of the initial $10^{-14}$ accuracy will be lost.
A more quantitative comparison of the two methods would be nice to explore, 
but we postpone it
to a future work. We expect that
%
using the linear approximation 
to obtain a stable manifold as large as the initial patch shown in Figure \ref{fig:slowstable}
would result in worse final error bounds and in a more time consuming computation.  

\end{remark}

\section{Acknowledgments}
The authors wish to thank two anonymous referees for 
carefully reading the submitted draft of the manuscript. 
Their numerous helpful suggestions and corrections
 improved the final published version.

\bibliographystyle{unsrt}
\bibliography{papers}

\begin{thebibliography}{10}

\bibitem{MR1976079}
X.~Cabr{\'e}, E.~Fontich, and R.~de~la Llave.
\newblock The parameterization method for invariant manifolds. {I}. {M}anifolds
  associated to non-resonant subspaces.
\newblock {\em Indiana Univ. Math. J.}, 52(2):283--328, 2003.

\bibitem{MR1976080}
X.~Cabr{\'e}, E.~Fontich, and R.~de~la Llave.
\newblock The parameterization method for invariant manifolds. {II}.
  {R}egularity with respect to parameters.
\newblock {\em Indiana Univ. Math. J.}, 52(2):329--360, 2003.

\bibitem{MR2177465}
X.~Cabr{\'e}, E.~Fontich, and R.~de~la Llave.
\newblock The parameterization method for invariant manifolds. {III}.
  {O}verview and applications.
\newblock {\em J. Differential Equations}, 218(2):444--515, 2005.

\bibitem{MR2821596}
Jan~Bouwe van~den Berg, J.~D. Mireles~James, Jean-Philippe Lessard, and
  Konstantin Mischaikow.
\newblock Rigorous numerics for symmetric connecting orbits: even homoclinics
  of the {G}ray-{S}cott equation.
\newblock {\em SIAM J. Math. Anal.}, 43(4):1557--1594, 2011.

\bibitem{JDMJ01}
J.~D. Mireles~James.
\newblock Polynomial approximation of one parameter families of (un)stable
  manifolds with rigorous computer assisted error bounds.
\newblock {\em Indagationes Mathematicae}, 26(1):225--265, 2015.

\bibitem{MR3207723}
Jean-Philippe Lessard, Jason~D. Mireles~James, and Christian Reinhardt.
\newblock Computer assisted proof of transverse saddle-to-saddle connecting
  orbits for first order vector fields.
\newblock {\em J. Dynam. Differential Equations}, 26(2):267--313, 2014.

\bibitem{MR1652147}
Martin Berz and Kyoko Makino.
\newblock Verified integration of {ODE}s and flows using differential algebraic
  methods on high-order {T}aylor models.
\newblock {\em Reliab. Comput.}, 4(4):361--369, 1998.

\bibitem{MR2644324}
A.~Wittig, M.~Berz, J.~Grote, K.~Makino, and S.~Newhouse.
\newblock Rigorous and accurate enclosure of invariant manifolds on surfaces.
\newblock {\em Regul. Chaotic Dyn.}, 15(2-3):107--126, 2010.

\bibitem{MR1930946}
Piotr Zgliczynski.
\newblock {$C^1$} {L}ohner algorithm.
\newblock {\em Found. Comput. Math.}, 2(4):429--465, 2002.

\bibitem{MR3281845}
Gianni Arioli and Hans Koch.
\newblock Existence and stability of traveling pulse solutions of the
  {F}itz{H}ugh-{N}agumo equation.
\newblock {\em Nonlinear Anal.}, 113:51--70, 2015.

\bibitem{HLM}
Allan Hungria, Jean-Philippe Lessard, and J.~D. Mireles~James.
\newblock Rigorous numerics for analytic solutions of differential equations:
  the radii polynomial approach.
\newblock {\em Math. Comp.}, 85(299):1427--1459, 2016.

\bibitem{BDLM}
Jan~Bouwe van~den Berg, Andr\'ea Desch\^enes, Jean-Philippe Lessard, and
  Jason~D. Mireles~James.
\newblock Stationary coexistence of hexagons and rolls via rigorous
  computations.
\newblock {\em SIAM Journal on Applied Dynamical Systems}, 14(2):942--979,
  2015.

\bibitem{fourierAutomaticDiff}
Jean-Philippe Lessard, J.~D. Mireles~James, and Julian Ransford.
\newblock Automatic differentiation for {F}ourier series and the radii
  polynomial approach.
\newblock {\em Phys. D}, 334:174--186, 2016.

\bibitem{MR3068557}
J.~D. Mireles~James and Konstantin Mischaikow.
\newblock Rigorous a-posteriori computation of (un)stable manifolds and
  connecting orbits for analytic maps.
\newblock {\em SIAM J. Appl. Dyn. Syst.}, 12(2):957--1006, 2013.

\bibitem{parmChristian}
J.~B. Van~den Berg, J.~D. Mireles~James, and Christian Reinhardt.
\newblock Computing (un)stable manifolds with validated error bounds:
  non-resonant and resonant spectra.
\newblock {\em Journal of Nonlinear Science}, 26:1055--1095, 2016.

\bibitem{maximeJPMe}
Maxime Breden, J.P. Lessard, and J.~D. Mireles~James.
\newblock Computation of maximal local (un)stable manifold patches by the
  parameterization method.
\newblock {\em Indagationes Mathematicae}, 27(1):340--367, 2016.

\bibitem{jayAMSnotes}
J.~D. Mireles~James.
\newblock Validated numerics for equilibria of analytic vector fields:
  invariant manifolds and connecting orbits.
\newblock {\em (To appear in AMS lecture notes - winter short course series)},
  pages 1--55, 2017.

\bibitem{MR3504856}
Tomoyuki Miyaji, Pawel Pilarczyk, Marcio Gameiro, Hiroshi Kokubu, and
  Konstantin Mischaikow.
\newblock A study of rigorous {ODE} integrators for multi-scale set-oriented
  computations.
\newblock {\em Appl. Numer. Math.}, 107:34--47, 2016.

\bibitem{MR3022075}
D.~Ambrosi, G.~Arioli, and H.~Koch.
\newblock A homoclinic solution for excitation waves on a contractile
  substratum.
\newblock {\em SIAM J. Appl. Dyn. Syst.}, 11(4):1533--1542, 2012.

\bibitem{MR2312532}
Kyoko Makino and Martin Berz.
\newblock Suppression of the wrapping effect by {T}aylor model-based verified
  integrators: the single step.
\newblock {\em Int. J. Pure Appl. Math.}, 36(2):175--197, 2007.

\bibitem{cnLohner}
Daniel Wilczak and Piotr Zgliczynski.
\newblock ${C}^n$-{L}ohner algorithm.
\newblock {\em Scheade Informaticae}, 20:9--46, 2011.

\bibitem{MR950221}
E.~Adams, D.~Cordes, and R.~Lohner.
\newblock Enclosure of solutions of ordinary initial value problems and
  applications.
\newblock In {\em Discretization in differential equations and enclosures
  ({W}eissig, 1986)}, volume~36 of {\em Math. Res.}, pages 9--28.
  Akademie-Verlag, Berlin, 1987.

\bibitem{MR904317}
Rudolf~J. Lohner.
\newblock Enclosing the solutions of ordinary initial and boundary value
  problems.
\newblock In {\em Computerarithmetic}, pages 255--286. Teubner, Stuttgart,
  1987.

\bibitem{MR1387154}
Rudolf~J. Lohner.
\newblock Computation of guaranteed enclosures for the solutions of ordinary
  initial and boundary value problems.
\newblock In {\em Computational ordinary differential equations ({L}ondon,
  1989)}, volume~39 of {\em Inst. Math. Appl. Conf. Ser. New Ser.}, pages
  425--435. Oxford Univ. Press, New York, 1992.

\bibitem{MR1626596}
Z.~Galias and P.~Zgliczy{\'n}ski.
\newblock Computer assisted proof of chaos in the {L}orenz equations.
\newblock {\em Phys. D}, 115(3-4):165--188, 1998.

\bibitem{MR2271217}
Daniel Wilczak.
\newblock The existence of {S}hilnikov homoclinic orbits in the {M}ichelson
  system: a computer assisted proof.
\newblock {\em Found. Comput. Math.}, 6(4):495--535, 2006.

\bibitem{MR2012847}
Tomasz Kapela and Piotr Zgliczy{\'n}ski.
\newblock The existence of simple choreographies for the {$N$}-body problem---a
  computer-assisted proof.
\newblock {\em Nonlinearity}, 16(6):1899--1918, 2003.

\bibitem{MR1961956}
Daniel Wilczak and Piotr Zgliczynski.
\newblock Heteroclinic connections between periodic orbits in planar restricted
  circular three-body problem---a computer assisted proof.
\newblock {\em Comm. Math. Phys.}, 234(1):37--75, 2003.

\bibitem{MR3032848}
Maciej~J. Capi{\'n}ski.
\newblock Computer assisted existence proofs of {L}yapunov orbits at {$L_2$}
  and transversal intersections of invariant manifolds in the {J}upiter-{S}un
  {PCR}3{BP}.
\newblock {\em SIAM J. Appl. Dyn. Syst.}, 11(4):1723--1753, 2012.

\bibitem{MR3443692}
Maciej~J. Capi\'nski and Anna Wasieczko-Zajac.
\newblock Geometric proof of strong stable/unstable manifolds with application
  to the restricted three body problem.
\newblock {\em Topol. Methods Nonlinear Anal.}, 46(1):363--399, 2015.

\bibitem{MR2551254}
Antoni Guillamon and Gemma Huguet.
\newblock A computational and geometric approach to phase resetting curves and
  surfaces.
\newblock {\em SIAM J. Appl. Dyn. Syst.}, 8(3):1005--1042, 2009.

\bibitem{MR3118249}
Gemma Huguet and Rafael de~la Llave.
\newblock Computation of limit cycles and their isochrons: fast algorithms and
  their convergence.
\newblock {\em SIAM J. Appl. Dyn. Syst.}, 12(4):1763--1802, 2013.

\bibitem{doi:10.1137/140960207}
Roberto Castelli, Jean-Philippe Lessard, and J.~D. Mireles~James.
\newblock Parameterization of invariant manifolds for periodic orbits {I}:
  {E}fficient numerics via the {F}loquet normal form.
\newblock {\em SIAM J. Appl. Dyn. Syst.}, 14(1):132--167, 2015.

\bibitem{MR2679365}
Gianni Arioli and Hans Koch.
\newblock Computer-assisted methods for the study of stationary solutions in
  dissipative systems, applied to the {K}uramoto-{S}ivashinski equation.
\newblock {\em Arch. Ration. Mech. Anal.}, 197(3):1033--1051, 2010.

\bibitem{dlLFGL}
Rafael de~la Llave, Jordi-Llu{\'{\i}}s Figueras, Marcio Gameiro, and
  Jean-Philippe Lessard.
\newblock Theoretical results on the numerical computation and a-posteriori
  verification of invariant objects of evolution equations.
\newblock {\em In preparation}., 2014.

\bibitem{jpPO_PDE}
Marcio Gameiro and Jean-Philippe Lessard.
\newblock A posteriori verification of invariant objects of evolution
  equations: periodic orbits in the {K}uramoto-{S}ivashinsky {PDE}.
\newblock {\em SIAM J. Appl. Dyn. Syst.}, 16(1):687--728, 2017.

\bibitem{mamotreto}
\`Alex Haro, Marta Canadell, Jordi-Llu\'\i~s Figueras, Alejandro Luque, and
  Josep-Maria Mondelo.
\newblock {\em The parameterization method for invariant manifolds}, volume 195
  of {\em Applied Mathematical Sciences}.
\newblock Springer, [Cham], 2016.
\newblock From rigorous results to effective computations.

\bibitem{wittigThesis}
A.~Wittig.
\newblock {\em Rigorous high-precision enclosures of fixed points and their
  invariant manifolds}.
\newblock PhD thesis, Michigan State University, 2011.

\bibitem{ourCode}
Shane Kepley.
\newblock Invariant manifold software.
\newblock
  \url{http://cosweb1.fau.edu/~jmirelesjames/analyticContinuationPage.html},
  2017.

\bibitem{MR2338393}
Sarah Day, Jean-Philippe Lessard, and Konstantin Mischaikow.
\newblock Validated continuation for equilibria of {PDE}s.
\newblock {\em SIAM J. Numer. Anal.}, 45(4):1398--1424 (electronic), 2007.

\bibitem{MR2443030}
Jan~Bouwe van~den Berg and Jean-Philippe Lessard.
\newblock Chaotic braided solutions via rigorous numerics: chaos in the
  {S}wift-{H}ohenberg equation.
\newblock {\em SIAM J. Appl. Dyn. Syst.}, 7(3):988--1031, 2008.

\bibitem{MR2630003}
Jan~Bouwe van~den Berg, Jean-Philippe Lessard, and Konstantin Mischaikow.
\newblock Global smooth solution curves using rigorous branch following.
\newblock {\em Math. Comp.}, 79(271):1565--1584, 2010.

\bibitem{MR2776917}
Marcio Gameiro and Jean-Philippe Lessard.
\newblock Rigorous computation of smooth branches of equilibria for the three
  dimensional {C}ahn-{H}illiard equation.
\newblock {\em Numer. Math.}, 117(4):753--778, 2011.

\bibitem{MR2299977}
A.~Haro and R.~de~la Llave.
\newblock A parameterization method for the computation of invariant tori and
  their whiskers in quasi-periodic maps: explorations and mechanisms for the
  breakdown of hyperbolicity.
\newblock {\em SIAM J. Appl. Dyn. Syst.}, 6(1):142--207 (electronic), 2007.

\bibitem{MR2240743}
{\`A}.~Haro and R.~de~la Llave.
\newblock A parameterization method for the computation of invariant tori and
  their whiskers in quasi-periodic maps: numerical algorithms.
\newblock {\em Discrete Contin. Dyn. Syst. Ser. B}, 6(6):1261--1300
  (electronic), 2006.

\bibitem{MR2289544}
A.~Haro and R.~de~la Llave.
\newblock A parameterization method for the computation of invariant tori and
  their whiskers in quasi-periodic maps: rigorous results.
\newblock {\em J. Differential Equations}, 228(2):530--579, 2006.

\bibitem{fastSlow}
J.~B. van~den Berg and J.~D. Mireles~James.
\newblock Parameterization of slow-stable manifolds and their invariant vector
  bundles: theory and numerical implementation.
\newblock {\em Discrete Contin. Dyn. Syst.}, 36(9):4637--4664, 2016.

\bibitem{parmPDE}
J.~D. Mireles~James and Christian Reinhardt.
\newblock Fourier-{T}aylor parameterization of unstable manifolds for parabolic
  partial differential equations: Formalism, implementation, and rigorous
  validation.
\newblock {\em (Submitted) {\footnotesize
  \verb|http://cosweb1.fau.edu/~jmirelesjames/unstableManParmPDEPage.html|}},
  2016.

\bibitem{maximePOmanifolds}
Maxime Murry, J.P. Lessard, and J.~D. Mireles~James.
\newblock Computer assisted proof of transverse cycle-to-cycle connecting
  orbits for first order vector fields.
\newblock {\em (In preperation))}, 2016.

\bibitem{MR2728178}
J.~D. Mireles~James and Hector Lomel{\'{\i}}.
\newblock Computation of heteroclinic arcs with application to the volume
  preserving {H}\'enon family.
\newblock {\em SIAM J. Appl. Dyn. Syst.}, 9(3):919--953, 2010.

\bibitem{MR2652784}
Siegfried~M. Rump.
\newblock Verification methods: rigorous results using floating-point
  arithmetic.
\newblock {\em Acta Numer.}, 19:287--449, 2010.

\bibitem{Ru99a}
{S.M.} Rump.
\newblock {INTLAB - INTerval LABoratory}.
\newblock In Tibor Csendes, editor, {\em {Developments~in~Reliable Computing}},
  pages 77--104. Kluwer Academic Publishers, Dordrecht, 1999.
\newblock http://www.ti3.tu-harburg.de/rump/.

\bibitem{mirelesCNSNS_2014}
J.~D. Mireles~James.
\newblock Computer assisted error bounds for linear approximation of (un)stable
  manifolds and rigorous validation of higher dimensional transverse connecting
  orbits.
\newblock {\em Communications in Nonlinear Science and Numerical Simulation},
  2014.

\end{thebibliography}

\end{document}